\numberwithin{equation}{section}
\theoremstyle{plain}
\newtheorem{theorem}{Theorem}[section]
\newtheorem*{theorem*}{Theorem}
\newtheorem{proposition}[theorem]{Proposition}
\newtheorem*{proposition*}{Proposition}
\newtheorem{lemma}[theorem]{Lemma}
\newtheorem*{lemma*}{Lemma}
\newtheorem{corollary}[theorem]{Corollary}
\newtheorem*{intr@thm}{\intr@thmname}
\newenvironment{introtheorem}[1]{%
	\def\intr@thmname{Theorem #1}
	\begin{intr@thm}}
	{\end{intr@thm}}
\newtheorem*{intr@crl}{\intr@crlname}
\newenvironment{introcorollary}[1]{%
	\def\intr@crlname{Corollary #1}
	\begin{intr@crl}}
	{\end{intr@crl}}
\newtheorem*{herbrand}{Herbrand quotient of units}
\theoremstyle{remark}
\newtheorem{remark}[theorem]{Remark}
\newtheorem*{acknowledgment}{Acknowledgment}
\newtheorem*{notation}{Notation}
\theoremstyle{definition}
\newtheorem*{definition*}{Definition}
\newcommand{\df}{:=}
\newcommand{\pd}[1]{#1\!\left[\tfrac{1}{2}\right]} 
\newcommand{\dsyl}[1]{#1_2} 
\newcommand{\Gal}{\mathrm{Gal}} 
\newcommand{\Z}{\mathbb{Z}} 
\newcommand{\N}{\mathbb{N}} 
\newcommand{\Q}{\mathbb{Q}} 
\newcommand{\rint}[1]{\mathcal{O}_{#1}} 
\newcommand{\unit}[1]{\rint{#1}^\times} 
\newcommand{\reg}[1]{R_{#1}} 
\newcommand{\clg}[1]{Cl_{#1}} 
\newcommand*{\arnm}[2]{\mathrm{N}_{#1/#2}}
\newcommand*{\arext}[2]{\iota_{#1/#2}}
\newcommand{\cln}[1]{h_{#1}} 
\newcommand{\clgp}[1]{\pd{\clg{#1}}} 
\newcommand{\princ}[1]{P_{#1}} 
\newcommand{\ideal}[1]{I_{#1}} 
\newcommand{\idele}[1]{\mathbb{A}^\times_{#1}} 
\newcommand{\idunit}[1]{\mathcal{U}_{#1}} 
\newcommand{\idclg}[1]{\mathcal{C}_{#1}} 
\newcommand{\q}[1]{Q_{#1}} 
\DeclarePairedDelimiterXPP{\hh}[3]{\widehat{h}^{#1}}{(}{)}{}{#2,#3} 
\DeclarePairedDelimiterXPP{\hhN}[3]{h^{#1}}{(}{)}{}{#2,#3} 
\DeclarePairedDelimiterXPP{\herbr}[2]{\vartheta}{(}{)}{}{#1,#2} 
\DeclarePairedDelimiterXPP{\HH}[3]{\widehat{H}^{#1}}{(}{)}{}{#2,#3} 
\DeclarePairedDelimiterXPP{\HHN}[3]{H^{#1}}{(}{)}{}{#2,#3} 
\DeclarePairedDelimiterXPP{\HhN}[3]{H_{#1}}{(}{)}{}{#2,#3} 
\DeclareMathOperator{\res}{res} 
\def\r@otsl#1#2#3#4{\mu_{#4}(#2)}
\def\r@ots#1{\mu_{#1}}
\def\roots{\futurelet\radici\r@otschose}
\def\r@otschose{%
	\ifx\radici[
		\expandafter\r@otsl
	\else
		\expandafter\r@ots
\fi}
\newcommand{\defect}[2]{\beta_{#1}(#2)} 
\DeclareMathOperator{\rank}{rk} 
\DeclarePairedDelimiterX\gindex[2]{(}{)}{#1:#2} 
\DeclarePairedDelimiter\gorder{\lvert}{\rvert} 
\newcommand{\kernel}[1]{\operatorname{Ker} #1} 
\newcommand{\cokernel}[1]{\operatorname{Coker} #1} 
\newcommand{\im}[1]{\operatorname{Im} #1}
\newcommand{\tors}[2][q]{{{#2}(#1)}}
\DeclareMathOperator{\tor}{tor} 
\newcommand{\ie}{i.~e.~} 
\newcommand{\rsp}{resp.~} 
\newcommand{\cfr}{cf.~} 
\title{Class Number Formula for Dihedral Extensions}
\author[L.~Caputo]{Luca Caputo} 
\email{luca.caputo@gmx.com}
\author[F.~A.~E.~Nuccio]{Filippo A.~E.~Nuccio Mortarino Majno di Capriglio}
\email{filippo.nuccio@univ-st-etienne.fr}
\address{Univ Lyon, Université Jean Monnet Saint-Étienne, CNRS UMR 5208, Institut Camille Jordan, F-42023 Saint-Étienne, France}
\date{\today}
\subjclass[2010]{11R29, 11R20, 11R34, 11R37}
\begin{document}

\begin{abstract}
We give an algebraic proof of a class number formula for dihedral extensions of number fields of degree $2q$, where $q$ is any odd integer. Our formula expresses the ratio of class numbers as a ratio of orders of cohomology groups of units and allows one to recover similar formulas which have appeared in the literature. As a corollary of our main result we obtain explicit bounds on the (finitely many) possible values which can occur as ratio of class numbers in dihedral extensions. Such bounds are obtained by arithmetic means, without resorting to deep integral representation theory.
\end{abstract}
\maketitle
\section{Introduction}
Let $k$ be a number field and let $M/k$ be a finite Galois extension. For a subgroup $H\subseteq \Gal(M/k)$, let $\chi_{H}$ denote the rational character of the permutation representation defined by the $\Gal(M/k)$-set $\Gal(M/k)/H$. If $M/k$ is non-cyclic, then there exists a relation
\[
\sum_{H}n_H\cdot \chi_H=0
\]   
where $n_H\in \Z$ are not all $0$ and $H$ runs through the subgroups of $\Gal(M/k)$. Using the formalism of Artin $L$-functions and the formula for the residue at $1$ of the Dedekind zeta-function, Brauer showed in \cite{Bra} that the above relation translates into the following formula
\begin{equation}\label{eq:brauerformula}
\prod_{H } (\cln{M^H})^{n_H}=\prod_{H} \left(\frac{w_{M^H}}{\reg{M^H}}\right)^{n_H}
\end{equation}
where, for a number field $E$, $\cln{E}$, $\reg{E}$, $w_E$ denote the class number, the regulator and the order of the group of roots of unity of $E$, respectively.

Although the regulator of a number field is in general an irrational number, the right-hand side of the above equality is clearly rational, being equal to the left-hand side. As a matter of fact, Brauer himself proved that when $k=\Q$ the regulator can be translated into an algebraic invariant of the Galois structure of the unit group. It is therefore natural to look for an algebraic proof of Brauer's formula, independent of any analytical result. Brauer also showed that the left-hand side of \eqref{eq:brauerformula} can take only finitely many values as $M$ ranges over all Galois extension of $\Q$ with Galois group isomorphic to a fixed finite group. It thus becomes interesting to look for an explicit description of a finite set of integers (as small as possible) containing all the values which are attainable by the class number ratio, as well as for generalizations beyond the case $k=\Q$.

Historically, these questions were investigated for the first time in the case where $M=\Q(\sqrt{d},\sqrt{-d})$ (with $d>1$ a square-free integer). In this case Dirichlet showed, by analytical means, that
\[
\frac{\cln{M}}{\cln{\Q(\sqrt{d})}\cln{\Q(\sqrt{-d})}} = \frac{1}{2}Q
\]
where $Q$ is $1$ or $2$ depending on whether a certain unit of $M$ has norm $1$ or not in $\Q(\sqrt{-1})$ (see \cite{Dir}). Hilbert later gave an algebraic proof of the above equality (see \cite{Hil} and \cite{LemKuroda} for a more general result).

Another interesting particular case is that of dihedral extensions, which is the one we focus on in this paper. This setting has already been considered by other authors: notably, Walter computed explicitly in \cite{Wal77} the regulator term, building upon Brauer's previous work and obtained a refined class number formula in the dihedral case over an arbitrary base field $k$, still relying on analytical results about Artin $L$-functions (Walter's result actually holds for more general Frobenius groups). Bartel provided a different representation-theoretic interpretation of Walter's result in \cite{Ba}, a result which was later generalized to a much larger family of groups in \cite{BadeS}.

Our main result is the following theorem whose proof is purely algebraic and does not rely on any argument involving $L$-functions.
\begin{introtheorem}{\ref{yap}}
Let $L/k$ be a Galois extension of number fields whose Galois group \mbox{$D=\Gal(L/k)$} is dihedral of order $2q$ with $q$ odd. Let $\Sigma\subseteq D$ be a subgroup of order $2$ and $G\subseteq D$ be the subgroup of order $q$. Set $K=L^\Sigma$ and $F=L^G$. Then
\[
\frac{\cln{L}\cln{k}^2}{\cln{F}\cln{K}^2}=\frac{\hh*{0}{D}{\unit{L}\otimes\pd{\Z}}}{\hh*{-1}{D}{\unit{L}\otimes\pd{\Z}}} = \frac{\hh{0}{D}{\unit{L}}}{\hh{-1}{D}{\unit{L}}}\frac{\hh{-1}{\Sigma}{\unit{L}}}{\hh{0}{\Sigma}{\unit{L}}}.
\]
\end{introtheorem}
Here $\unit{L}$ is the group of units of $L$ and, for a subgroup $H\subseteq D$ and an integer $i$, $\hh{i}{H}{B}$ denotes the order of the $i$-th Tate cohomology group of $H$ with values in the $H$-module $B$.

Similar algebraic proofs have appeared in the literature although, as far as we know, only for special cases. Halter-Koch in \cite{HK} and Moser in \cite{Mos79} analysed the case of a dihedral extension of $\Q$ of degree $2p$, where $p$ is an odd prime. They express the ratio of class numbers in terms of a unit index, proving that in this setting one has
\[
\frac{\cln{L}}{\cln{F}\cln{K}^2}=\gindex*{\unit{L}}{\unit{K}\unit{K'}\unit{F}} p^{-r}
\]
where $K'$ is a Galois conjugate of $K$ in $L$, and $r=1$ if $F$ is imaginary and $2$ otherwise. Their result was generalized by Jaulent in \cite{Ja1}, who gave an algebraic proof of a similar formula for dihedral extensions of degree $2p^s$ when the ground field $k$ is $\Q$ or an imaginary quadratic field and $s\geq 1$ is any integer. Jaulent actually obtained his result as a particular case of a formula for Galois extensions over an arbitrary ground field with Galois group of the form $\Z/p^s \rtimes T$ where $T$ is an abelian group of exponent dividing $p-1$. In such general formula the ratio of class numbers is expressed in terms of orders of cohomology groups of units, as in our Theorem \ref{yap}. This approach was not developed further and in fact Jaulent's paper is not cited in subsequent works in the literature.

The study of class number formulas for dihedral extensions from an algebraic viewpoint was taken up more than 20 years later by Lemmermeyer. In \cite{Le}, he generalized Halter-Koch's formula to arbitrary ground fields, always under the assumption that the degree is $2p$ and under some ramification condition.

The quoted works also provide an explicit description of the finitely many possible values which may occur as class number ratio, clearly only under their working assumptions. These values are obtained using the explicit classification of the indecomposable integral representations of $D$, in order to find an exhaustive list of the possible indexes of units which can appear as regulator ratios. This classification is not available for dihedral groups of arbitrary order $2q$, as discussed in the introduction of \cite{HR2}, for instance. Combining our formula with the knowledge of the Herbrand quotient of the units for the cyclic subextension $L/F$, we find a finite set of integers containing all the attainable values of the ratio of class numbers, without resorting to any integral representation theoretic results. This indicates that the traditional way of expressing the class number ratio as a unit index, although suggestive, is less effective than our cohomological approach for the purpose of determining the possible values of the ratio of class numbers.

Our main results in this direction are the following two corollaries. For a number field $M$, denote by $\roots[{q^\infty}]{M}$ the roots of unity in $M$ of $q$-power order, and by $\roots[{q}]{M}$ the subgroup of $\roots[{q^\infty}]{M}$ of elements killed by $q$. Set
\[
\defect{M}{q}=
\begin{cases}
0&\text{if $\roots[q]{M}$ is trivial,}\\
1&\text{otherwise}
\end{cases}
\]   	
Give a a prime number $\ell$,  $v_\ell$ denotes the $\ell$-adic valuation.
\begin{introcorollary}{\ref{cor:bounds}}
Let $a=\rank_\Z(\unit{F})+ \defect{F}{q}+1$ and $b=\rank_\Z(\unit{k})+\defect{k}{q}$. Then, for every prime~$\ell$, the following bounds hold
\[
-av_\ell(q)\leq v_\ell\left(\frac{\cln{L}\cln{k}^2}{\cln{F}\cln{K}^2}\right)\leq bv_\ell(q).
\]
In particular, if $F$ is totally real of degree $[F\colon\Q]=2d$, then
\[
-2dv_\ell(q)\leq v_\ell\left(\frac{\cln{L}\cln{k}^2}{\cln{F}\cln{K}^2}\right)\leq (d-1)v_\ell(q). 
\]
\end{introcorollary}

In case $F$ is assumed to be a CM field and $k=F^+$ is its totally real subfield, we can strengthen our bounds as follows.

\begin{introcorollary}{\ref{cor:boundsCM}}
Assume that $F$ is a CM field of degree $[F:\Q]=2d$ and that $k$ is its totally real subfield. Let $s$ be the order of the quotient $\bigl((\unit{K})^q\cap\unit{k}\bigr)/(\unit{k})^q$ and let $t$ be the order of $\roots[{q^\infty}]{F}/\roots[q]{F}$. Then, for every prime~$\ell$,
\[
-v_\ell(q)-v_\ell(s)-v_\ell(t)\leq v_\ell\left(\frac{\cln{L}\cln{k}^2}{\cln{F}\cln{K}^2}\right)\leq (d-1)v_\ell(q).
\]
\end{introcorollary}
When $q=p$ is prime and $k=\Q$, we can further improve Corollary \ref{cor:bounds} and prove in Corollary \ref{cor:boundsQ} that the ratio of class numbers satisfies
\[
\frac{\cln{L}}{\cln{F}\cln{K}^2}\in\left\{1,\frac{1}{p},\frac{1}{p^2}\right\}.
\]
This answers the MathOverflow question \cite{BartelMO} of Bartel, who asks for a proof of this fact that does not rely on the analytical class number formula, nor on any integral representation theory.  Our Corollary \ref{cor:bounds} actually shows that a similar result holds without the assumption that $k=\Q$ or that $q=p$ be prime.
  
We now briefly describe the structure of the paper. Section \ref{sec:cohomprelim} contains all the technical algebraic lemmas used in the rest of the paper. The proof of Theorem \ref{yap}, which is divided in two parts, is contained in Section \ref{sec:classnumberformula}. The proof of the $2$-part follows a strategy of Walter (see \cite{Wal77}), combining an elementary result about $D$-modules and a simple arithmetic piece of information. On the other hand, the proof for the odd components heavily relies on cohomological class field theory and is partly inspired by the proof of Chevalley's ambiguous class number formula. Since the extension $L/k$ is non-abelian we cannot apply class field theory directly: however, since we are restricted to odd parts, the action of $\Delta=\Gal(F/k)$ on the cohomology of $G=\Gal(L/F)$ can be described very explicitly as discussed in Section \ref{sec:cohomprelim}. We conclude the article with Section \ref{sec:equivalence_unit_index} where we translate our cohomological formula in an explicit unit index (see Corollary \ref{cor:formulawithunitindex}).

\begin{acknowledgment} 
This paper has a long history. We started working on it in 2007, during a summer school in Iwasawa theory held at McMaster University, following a suggestion of Ralph Greenberg. At the time our main focus was on what we called ``fake $\Z_p$-extensions''  and the formula for dihedral extensions was simply a tool which we obtained by translating the right-hand side of \eqref{eq:brauerformula} into a unit index essentially using linear algebra. We posted a version of our work (quite different from the present one) on arXiv in 2009 and it became part of the second author's PhD thesis \cite{Nuc09}. At that time we were confident that our techniques would easily generalize to cover other classes of extensions, so we decided to wait. Then basically nothing happened until the autumn of 2016 when the second author came across a question \cite{BartelMO} posted by Alex Bartel on MathOverflow. This motivated us to revise and improve our work, and to split the original version in two articles. This is the first of the two papers, concentrating on the class number formula for finite dihedral extensions, while in the second, which is still in progress, we focus on fake $\Z_p$-extensions. We are grateful to Henri Johnston for encouraging us to take up this revision task and to Alex Bartel for some comments on a preliminary version. We also thank the anonymous referee for useful comments and suggestions which improved the presentation of our paper.
\end{acknowledgment}
	
\section{Cohomological preliminaries}\label{sec:cohomprelim}
This section contains all the technical algebraic tools we need in the rest of the paper. We fix an odd positive integer $q\geq 3$ and we let $D$ denote the dihedral group of order $2q$. We choose generators $\sigma,\rho\in D$ such that 
	\[
	\sigma^2=\rho^{q}=1, \quad \sigma\rho = \rho^{-1} \sigma.
	\]
We set $\Sigma \df \langle\sigma\rangle$, $\Sigma'\df \langle\sigma\rho\rangle$ and $G\df\langle\rho\rangle,\Delta\df D/G$. We say that an abelian group~$B$ is uniquely~$2$-divisible when multiplication by $2$ is an automorphism of~$B$. As we shall see below, the~$D$-Tate cohomology group $\HH{i}{D}{B}$ of a uniquely $2$-divisible~$D$-module~$B$ is canonically isomorphic to the subgroup of $\HH{i}{G}{B}$ on which $\Delta$ acts trivially. We find a similar cohomological description for the subgroup $\HH{i}{G}{B}$ on which $\Delta$ acts as $-1$: the precise statement is given in Proposition~\ref{fixcohom}. 
 
	\begin{notation} 
Given a group $\Gamma$ and a $\Gamma$-module $B$, we denote by $B^\Gamma$ (\rsp $B_\Gamma$) the maximal submodule (\rsp the maximal quotient) of $B$ on which $\Gamma$ acts trivially. We will sometimes refer to $B^\Gamma$ (\rsp $B_\Gamma$) as the invariants (\rsp coinvariants) of $B$. Moreover, let $N_\Gamma=\sum_{\gamma\in\Gamma}\gamma\in\Z[\Gamma]$ be the norm, $B[N_\Gamma]$ the kernel of multiplication by $N_\Gamma$ and $I_{\Gamma}$ the augmentation ideal of $\Z[\Gamma]$ defined as
\[
I_\Gamma\df\kernel(\Z[\Gamma]\longrightarrow \Z)\qquad\text{ where the morphism is induced by }\gamma\mapsto 1\text{ for all }\gamma\in\Gamma.
\]
For every $i\in \Z$, let $\HH{i}{\Gamma}{B}$ denote the $i$th Tate cohomology group of $\Gamma$ with values in $B$. Similarly, for $i\geq 0$, $\HHN{i}{\Gamma}{B}$ (\rsp $\HhN{i}{\Gamma}{B}$) is the $i$th cohomology (\rsp homology) group of $\Gamma$ with values in $B$. For standard properties of these groups (which will be implicitly used without specific mention) the reader is referred to \cite[Chapter~XII]{CE}. If $B'$ is another $\Gamma$-module and $f\colon B\to B'$ is a homomorphism of abelian groups, we say that $f$ is $\Gamma$-equivariant (\rsp $\Gamma$-antiequivariant) if $f(\gamma b)=\gamma f(b)$ (\rsp $f(\gamma b)=-\gamma f(b)$) for every $\gamma\in\Gamma$ and $b\in B$. Suppose now that $\Gamma=\{1,\gamma\}$ has order $2$. An object $B$ of the category of uniquely $2$-divisible $\Gamma$-modules admits a functorial decomposition $B=B^+\oplus B^-$ where we denote by $B^+$ (\rsp $B^-$) the maximal submodule on which $\Gamma$ acts trivially (\rsp as $-1$). Such decomposition is obtained by writing $b=\frac{1+\gamma}{2}b+\frac{1-\gamma}{2}b$ for $b\in B$. Then 
\begin{equation}\label{2divid}
B^\Gamma=B^+\cong B/B^-=B_\Gamma.
\end{equation}
\end{notation}

We now go back to our setting and concentrate on the case where $\Gamma$ is equal to a subquotient of $D$. Observe that, if $B$ is a uniquely $2$-divisible $D$-module, then $B^H$ is again uniquely $2$-divisible for every subgroup $H\subseteq D$. In particular, for every $i\in \Z$, the map
\[
\HH{i}{\Delta}{B^G} \overset{\cdot 2}{\longrightarrow} \HH{i}{\Delta}{B^G}
\]
is an automorphism and therefore 
\begin{equation}\label{eq:2divct}
\HH{i}{\Delta}{B^{G}}=0
\end{equation}
because these groups are killed by $2=\gorder{\Delta}$. Using this we prove the next proposition, which is the key of all our cohomological computations.

	\begin{proposition}\label{fixcohom} 
		Let $B$ be a uniquely $2$-divisible $D$-module. Then, for every $i\in \Z$, restriction induces isomorphisms 
		\[
		\HH{i}{D}{B}\cong \HH{i}{G}{B}^+.
		\]
		Moreover, the Tate isomorphism $\HH{i}{G}{B}\cong \HH{i+2}{G}{B}$ is $\Delta$-antiequivariant. In particular, for every $i\in\Z$, there are isomorphisms
		\begin{align*}
		\HH{i}{G}{B}^-&\cong\HH{i+2}{G}{B}^+\cong \HH{i+2}{D}{B}\\ 
		\HH{i}{G}{B}^+&\cong\HH{i+2}{G}{B}^-\cong \HH{i}{D}{B}\end{align*}
		of abelian groups. 
	\end{proposition}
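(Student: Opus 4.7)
\emph{Proof plan.} My strategy is twofold. First, I will establish the restriction isomorphism $\HH{i}{D}{B}\cong\HH{i}{G}{B}^+$ via the standard transfer-and-Mackey argument; then I will check that the Tate periodicity of the cyclic group $G$ is $\Delta$-antiequivariant. The remaining chains of isomorphisms follow formally from these two facts.

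For the first point, fix $i\in\Z$ and consider the restriction $\res\colon\HH{i}{D}{B}\to\HH{i}{G}{B}$. Since conjugation by any element of $D$ acts trivially on $D$-cohomology but induces the $\Delta$-action on $G$-cohomology, the image of $\res$ lies in $\HH{i}{G}{B}^\Delta=\HH{i}{G}{B}^+$, the last equality coming from \eqref{2divid} applied to the uniquely $2$-divisible module $\HH{i}{G}{B}$. Denoting by $\mathrm{cor}$ the corestriction, the relation $\mathrm{cor}\circ\res=[D:G]=2$ together with unique $2$-divisibility shows that $\res$ is injective. Moreover, because $G\triangleleft D$, the Mackey double-coset formula reduces to $\res\circ\mathrm{cor}=\sum_{\delta\in\Delta}\delta$, which on $\HH{i}{G}{B}^+$ is just multiplication by $2$. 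Hence $\tfrac{1}{2}\mathrm{cor}$ is an inverse of $\res$ on the $+$-part, completing the first step.

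For the antiequivariance of Tate periodicity, recall that for the cyclic group $G$ the isomorphism $T\colon\HH{i}{G}{B}\to\HH{i+2}{G}{B}$ is given by cup product with a generator $\chi$ of $\HH{2}{G}{\Z}\cong\Z/q\Z$. Via the connecting homomorphism attached to $0\to\Z\to\Q\to\Q/\Z\to 0$, this group is identified with $\mathrm{Hom}(G,\Q/\Z)$, on which the $\Delta$-action induced by $\sigma\rho\sigma^{-1}=\rho^{-1}$ evidently acts as $-1$; hence $\sigma\cdot\chi=-\chi$. Functoriality of the cup product then gives
\[
\sigma\cdot(x\cup\chi)=(\sigma\cdot x)\cup(\sigma\cdot\chi)=-(\sigma\cdot x)\cup\chi,
\]
which is precisely the claimed $\Delta$-antiequivariance of $T$.

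The conclusion is formal: $T$ interchanges the $+$- and $-$-parts in $G$-cohomology, while restriction identifies the $+$-part in degree $i$ with $\HH{i}{D}{B}$, so the two chains of isomorphisms in the statement follow immediately. I expect the subtlest point to be the sign tracking in the antiequivariance step: the identification of $\HH{2}{G}{\Z}$ with $\mathrm{Hom}(G,\Q/\Z)$ seems the cleanest way to avoid wrestling with signs on the explicit $2$-periodic resolution of $\Z$ over $\Z[G]$.
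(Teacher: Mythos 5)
Your proof is correct, and the two halves of it relate to the paper's argument differently. The $\Delta$-antiequivariance step is essentially identical to the paper's: both identify $\HHN{2}{G}{\Z}$ with $\operatorname{Hom}(G,\Q/\Z)$ via the connecting map of $0\to\Z\to\Q\to\Q/\Z\to 0$, observe that $\delta$ acts as $-1$ there, and invoke the compatibility of conjugation with cup products. The restriction isomorphism, however, is proved by a genuinely different route. The paper splits into cases: for $i\neq 0,-1$ it uses the Hochschild--Serre spectral sequence together with the vanishing $\HHN{i}{\Delta}{B^G}=\HhN{i}{\Delta}{B^G}=0$ from \eqref{eq:2divct}, and then handles degrees $0$ and $-1$ by a separate diagram chase through $N_DB$ and $(N_GB)^+$ (using $\mathrm{cor}\circ\res=2$ only for injectivity). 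Your transfer argument --- $\mathrm{cor}\circ\res=2$ for injectivity, and $\res\circ\mathrm{cor}=N_\Delta$ (the Mackey formula for the normal subgroup $G$) acting as multiplication by $2$ on the $+$-part, so that $\tfrac12\mathrm{cor}$ is a two-sided inverse --- works uniformly for all degrees of Tate cohomology at once, avoiding both the spectral sequence and the case split. This is arguably cleaner; the only ingredients you rely on implicitly are that multiplication by $2$ on $\HH{i}{D}{B}$ and $\HH{i}{G}{B}$ is an automorphism (it is, being induced by the automorphism $\cdot 2$ of $B$, which also justifies your use of \eqref{2divid} to write $\HH{i}{G}{B}^\Delta=\HH{i}{G}{B}^+$) and the validity of the double-coset formula in all Tate degrees, both of which are standard. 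What the paper's route buys in exchange is that it only needs the restriction--corestriction machinery in degrees $0$ and $-1$, where it can be written out explicitly on norms and kernels.
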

	\begin{proof} 
		We start by proving the first isomorphisms for $i\in\Z$ with $i\ne 0,-1$, \ie for standard cohomology and homology. Since $\HHN{i}{\Delta}{B^G}=\HhN{i}{\Delta}{B^G}=0$ by \eqref{eq:2divct}, using the Hochschild--Serre spectral sequence (see, for instance, \cite[Chapter~XVI, \S~6]{CE}), we deduce that restriction induces isomorphisms $\HHN{i}{D}{B}\cong \HHN{i}{G}{B}^\Delta = \HHN{i}{G}{B}^+$ and $\HhN{i}{D}{B}\cong \HhN{i}{G}{B}_{\Delta}= \HhN{i}{G}{B}^{+}$ for $i\geq 0$. 

We now consider the case $i=0$. Restriction induces a commutative diagram with exact rows
		\begin{equation}\begin{aligned}\label{eq:ND=NG+}
	\xymatrix{
	0\ar@{->}[0,1]&N_DB\ar@{->}[0,1]\ar@{->}[1,0]&\HHN{0}{D}{B}\ar@{->}[0,1]\ar@{->}[1,0]&\HH{0}{D}{B}\ar@{->}[0,1]\ar@{->}[1,0]&0\\
	0\ar@{->}[0,1]&(N_GB)^+\ar@{->}[0,1]&\HHN{0}{G}{B}^+\ar@{->}[0,1]&\HH{0}{G}{B}^+\ar@{->}[0,1]&0
	}
	\end{aligned}\end{equation}
The bottom row is exact since $N_GB$ is uniquely $2$-divisible, hence $\Delta$-cohomologically trivial as shown in \eqref{eq:2divct}. We showed above that the central vertical arrow is an isomorphism. In particular the right vertical arrow is surjective. It is also injective since restriction followed by corestriction is multiplication by $2$ which is an automorphism of $\HH{0}{D}{B}$. Therefore
\[
\HH{0}{G}{B}^+\cong \HH{0}{D}{B}
\]
as claimed. The proof is similar in degree $-1$, by writing $\HH{-1}{D}{B}=\kernel\bigl(N_D:\HhN{0}{D}{B}\to N_DB\bigr)$ and similarly for $G$.
		
Finally, we discuss the $\Delta$-antiequivariance of Tate isomorphisms. Recall that, for any $i\in\Z$, the Tate isomorphism is given by the cup product with a fixed generator $\chi$ of $\HHN{2}{G}{\Z}$:
\begin{equation*}\begin{array}{ccc}
\HH{i}{G}{B}&\longrightarrow&\HH{i+2}{G}{B}\\
x&\longmapsto&x\cup \chi.
\end{array}\end{equation*}
As in \cite[Chap.~X,~\S~7]{CE}, we let $\delta\in\Delta$ be the non-trivial element and we denote by $c_\delta$ the automorphism induced by $\delta$ on $G$-Tate cohomology of $B$. Then $c_\delta$ is multiplication by $-1$ on $\HHN{2}{G}{\Z}$: indeed, since $\delta$ acts as~$-1$ on $G$, it also acts as $-1$ on $\operatorname{Hom}(G,\Q/\Z)$. Our claim follows by considering the isomorphism of $\Delta$-modules $\operatorname{Hom}(G,\Q/\Z)=\HHN{1}{G}{\Q/\Z}\cong\HHN{2}{G}{\Z}$ given by the connecting homomorphism of the exact sequence
\[
0\longrightarrow \Z \longrightarrow \Q \longrightarrow\Q/\Z\to 0 
\]
of $G$-modules with trivial action. Then, by \cite[Chap.~XII, \S 8, (14)]{CE}, 
\[
c_\delta(x\cup\chi)=-(c_\delta x)\cup \chi
\] 
which establishes the result.
\end{proof}

\begin{remark}
The above proposition implies in particular that the Tate cohomology of a uniquely $2$-divisible $D$-module is periodic of period $4$. Indeed, if $B$ is such a module and $i\in\Z$, we have isomorphisms
\[
\HH{i}{D}{B} \cong \HH{i}{G}{B}^+ \cong \HH{i+2}{G}{B}^- \cong \HH{i+4}{G}{B}^+\cong\HH{i+4}{D}{B}.
\]
In fact, one can prove that the cohomology of \emph{every} $D$-module is periodic of period $4$ (\cfr \cite[Theorem XII.11.6]{CE}). 
\end{remark}

\begin{remark}
If $B$ is a uniquely $2$-divisible $D$-module such that $\HH{j}{G}{B}=0$ for some $j\in \Z$, then
	\[
	\HH{j+2i}{D}{B}=0\qquad\text{ for all }i\in\Z.
	\]
Indeed, we have seen in the above proposition that $\HH{j+2i}{D}{B}\subseteq \HH{j+2i}{G}{B}$. On the other hand, the periodicity of Tate cohomology for the cyclic group $G$ implies that $\HH{j+2i}{G}{B}\cong \HH{j}{G}{B}=0$.
\end{remark}

We now give a description of some cohomology groups of $D$ with values in a $D$-module $B$ in terms of explicit subquotients of $B$. We start with the following lemma.
	
\begin{lemma}\label{lemma:2incl}
Let $B$ be a $D$-module. Then 
\[
I_{G}B\subseteq B^{\Sigma}+B^{\Sigma'}.
\]
\end{lemma}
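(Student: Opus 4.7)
My plan is to begin from the two tautological observations that, for every $b\in B$, the norm element $(1+\sigma)b$ lies in $B^{\Sigma}$ while $(1+\sigma\rho)b$ lies in $B^{\Sigma'}$. Subtracting these two elements and using the dihedral relation $\sigma\rho=\rho^{-1}\sigma$ produces the key identity
\[
(1+\sigma)b-(1+\sigma\rho)b=\sigma b-\sigma\rho b=(1-\rho^{-1})\sigma b,
\]
which therefore sits in $B^{\Sigma}+B^{\Sigma'}$. Since $\sigma$ is an involution on the abelian group $B$, as $b$ varies over $B$ the element $\sigma b$ varies over all of $B$ as well. I can then read off the single inclusion
\[
(1-\rho^{-1})B\subseteq B^{\Sigma}+B^{\Sigma'}.
\]

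To promote this to the full statement, I would exploit the factorisation
\[
1-\rho^{-i}=(1-\rho^{-1})\bigl(1+\rho^{-1}+\cdots+\rho^{-(i-1)}\bigr)
\]
in $\Z[G]$, valid for every $i\in\Z$. This rewrites $(1-\rho^{-i})b$ as $(1-\rho^{-1})b'$ with $b'=(1+\rho^{-1}+\cdots+\rho^{-(i-1)})b\in B$, so $(1-\rho^{-i})B\subseteq(1-\rho^{-1})B\subseteq B^{\Sigma}+B^{\Sigma'}$. As $-i$ ranges over all residues modulo $q$, the elements $(\rho^i-1)b$ span $I_G B$ as a $\Z$-module, yielding the desired inclusion.

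The only point where I need to be careful is that $B^{\Sigma}+B^{\Sigma'}$ is merely an additive subgroup of $B$, not a $\Z[G]$-submodule, since $\rho$ does not stabilize either summand. For this reason the extension from $1-\rho^{-1}$ to higher powers cannot be performed by letting $\rho$ act on elements of $B^{\Sigma}+B^{\Sigma'}$; instead, it must be carried out inside the \emph{argument}, by rewriting $(1-\rho^{-i})b$ as $(1-\rho^{-1})b'$ as above. Beyond this cautionary observation, the lemma reduces to a one-line computation once one spots the right combination of the two partial norms $N_{\Sigma}=1+\sigma$ and $N_{\Sigma'}=1+\sigma\rho$, so I do not anticipate any deeper obstacle.
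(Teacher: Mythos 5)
Your proof is correct and takes essentially the same approach as the paper: both arguments express a generator of $I_GB$ as a $\Z$-linear combination of the two partial norms $N_\Sigma=1+\sigma$ and $N_{\Sigma'}=1+\sigma\rho$ applied to suitable elements of $B$ (the paper writes $(1-\rho)b=-(1+\sigma)\rho b+(1+\sigma\rho)b$, which is your identity after the substitution $b\mapsto\sigma b$). Your extra care in reducing $(1-\rho^{-i})B$ to $(1-\rho^{-1})B$ by rewriting inside the argument is exactly the step the paper leaves implicit in the equality $I_GB=(1-\rho)B$, and your remark that $B^{\Sigma}+B^{\Sigma'}$ is not $G$-stable is a valid point.
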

\begin{proof}
This is shown in \cite[Lemma 3.4]{Le} and we recall here the details of the proof. For every $b\in B$, we have
\[
(1-\rho)b = - \bigl((1+\sigma)\rho\bigr)b+ (1+\sigma\rho)b.
\] 
It follows that
\[
I_{G}B = (1-\rho)B \subseteq \bigl((1+\sigma)\rho\bigr)B+ (1+\sigma\rho)B=\bigl(1+\sigma\bigr)B+ (1+\sigma\rho)B=N_\Sigma B+N_{\Sigma'}B\subseteq B^{\Sigma}+B^{\Sigma'}.
\] 
\end{proof}

Let $B$ be a $D$-module. Observe that $B^{\Sigma'}=\rho^{t}(B^{\Sigma})$ for any $t\in\Z$ such that $2t \equiv -1 \pmod{q}$. Indeed, one easily checks that $\rho^t(B^\Sigma) = B^{\rho^t \Sigma \rho^{-t}}$. Now, observing that $\rho^t \sigma \rho^{-t} = \sigma\rho^{-2t}=\sigma\rho$, we deduce that $\rho^t\Sigma\rho^{-t}=\langle\sigma\rho\rangle=\Sigma'$ and therefore $B^{\Sigma'}=\rho^{t}(B^{\Sigma})$.
In particular, a generic element $b_1 + b_2\in B^{\Sigma}+B^{\Sigma'}$ can be written as
\[
b_1+b_2=b_1+\rho^t(b') = b_1 + b' - (1 -\rho^t)b' \in B^{\Sigma} + I_{G}B
\]
for some $b'\in B^\Sigma$. Using Lemma \ref{lemma:2incl}, we deduce that
\begin{equation}\label{eq:BSigma'}
B^{\Sigma} + B^{\Sigma'} = B^{\Sigma} + I_{G}B = B^{\Sigma'} + I_{G}B
\end{equation}
and, in particular, if $B$ is uniquely $2$-divisible,
\begin{equation}\label{eq:BSigma-}
\bigl(B^{\Sigma} + B^{\Sigma'} \bigr)^-= (I_{G}B)^-
\end{equation}
since $\Sigma$ acts trivially on $\bigl(B^{\Sigma} + I_GB\bigr)/I_GB$.
	
	\begin{lemma}\label{lemma:quozcoom}
		Let $B$ be a uniquely $2$-divisible $D$-module. There are isomorphisms of abelian groups
		\begin{align*}
		\Bigl(B[N_G]\cap \bigl(B^{\Sigma}+B^{\Sigma'}\bigr)\Bigr)\Big/I_{G}B&\cong \HH{-1}{D}{B}\\
		\intertext{and}
		B[N_{G}]\Big/\Bigl(\bigl(B^{\Sigma}+B^{\Sigma'}\bigr)\cap B[N_G]\Bigr)&\cong \HHN{1}{D}{B}.
		\end{align*}
	\end{lemma}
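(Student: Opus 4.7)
The plan is to reduce both isomorphisms to the standard description $\HH{-1}{G}{B}=B[N_G]/I_GB$ for the cyclic group $G$, and then extract the $\Delta$-isotypic components by means of Proposition~\ref{fixcohom}. That proposition gives directly $\HH{-1}{D}{B}\cong \HH{-1}{G}{B}^+$, and coupling it with the $\Delta$-antiequivariance of the Tate periodicity yields
\[
\HHN{1}{D}{B}=\HH{1}{D}{B}\cong\HH{1}{G}{B}^+\cong \HH{-1}{G}{B}^-.
\]
It thus suffices to identify the $+$ and $-$ parts of $B[N_G]/I_GB$ with the subquotients appearing in the statement.

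Via the isomorphism $\Sigma\hookrightarrow D\twoheadrightarrow\Delta$, the $\Delta$-action on $\HH{-1}{G}{B}$ is induced by multiplication by $\sigma$ (this is a well-defined automorphism of the quotient, since $G$ is normal in $D$, so $\sigma$ stabilises both $B[N_G]$ and $I_GB$). The key claim I would prove is the equivalence
\[
(1-\sigma)b\in I_GB\quad\Longleftrightarrow\quad b\in B^\Sigma+I_GB,\qquad b\in B.
\]
The direction $\Leftarrow$ is immediate from the $\sigma$-stability of $I_GB$. For $\Rightarrow$, unique $2$-divisibility lets one write
\[
b=\tfrac{1+\sigma}{2}b+\tfrac{1}{2}(1-\sigma)b,
\]
where the first summand lies in $B^\Sigma$ and the second in $\tfrac{1}{2}I_GB=I_GB$ (since $I_GB$ is also uniquely $2$-divisible). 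Combining this equivalence with \eqref{eq:BSigma'} identifies the $\sigma$-fixed classes in $B[N_G]/I_GB$ with $\bigl(B[N_G]\cap(B^\Sigma+B^{\Sigma'})\bigr)/I_GB$, which gives the first isomorphism.

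For the second isomorphism I would exploit that unique $2$-divisibility provides a direct sum decomposition $\HH{-1}{G}{B}=\HH{-1}{G}{B}^+\oplus\HH{-1}{G}{B}^-$. Consequently $\HH{-1}{G}{B}^-$ is isomorphic to the quotient of $\HH{-1}{G}{B}=B[N_G]/I_GB$ by the $+$-part just computed, which is exactly $B[N_G]/\bigl(B[N_G]\cap(B^\Sigma+B^{\Sigma'})\bigr)$. The inclusion $I_GB\subseteq B[N_G]\cap(B^\Sigma+B^{\Sigma'})$ needed to make sense of this follows from Lemma~\ref{lemma:2incl} together with the trivial identity $N_G(1-\rho)=0$. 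The only genuinely non-routine step is the displayed equivalence above, and this is where the hypothesis of unique $2$-divisibility plays its essential role; the rest is a matter of assembling Proposition~\ref{fixcohom}, the cyclic-group formula for Tate cohomology, and the relation \eqref{eq:BSigma'}.
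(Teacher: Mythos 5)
Your proof is correct and follows essentially the same route as the paper: both arguments reduce to computing the $\Delta$-eigenspaces of $\HH{-1}{G}{B}=B[N_G]/I_GB$ via Proposition~\ref{fixcohom} and the identity $B^\Sigma+I_GB=B^\Sigma+B^{\Sigma'}$ from \eqref{eq:BSigma'}, and then obtain the second isomorphism by quotienting out the plus part. Your explicit equivalence $(1-\sigma)b\in I_GB\Leftrightarrow b\in B^\Sigma+I_GB$ is just a hands-on restatement of the step the paper carries out by taking $\Sigma$-invariants of the exact sequence $0\to I_GB\to B[N_G]\to\HH{-1}{G}{B}\to 0$.
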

	\begin{proof}
		Consider the short exact sequence defining $\HH{-1}{G}{B}$:
		\[
		0\longrightarrow  I_{G}B\longrightarrow  B[N_{G}] \longrightarrow  \HH{-1}{G}{B}\longrightarrow  0.
		\]
		Since $B$ is uniquely $2$-divisible, taking $\Sigma$-invariants and using Proposition \ref{fixcohom}, we get an exact sequence
		\begin{equation}\label{eq:sesH-1DB}
		0\longrightarrow  I_{G}B\cap B^{\Sigma}\longrightarrow  B[N_{G}]\cap B^{\Sigma} \longrightarrow \HH{-1}{D}{B}\longrightarrow  0.
		\end{equation}
		Since $I_{G}B\subseteq B^{\Sigma}+B^{\Sigma'}$ by Lemma \ref{lemma:2incl}, we can consider the quotient $\bigl(B[N_{G}]\cap(B^{\Sigma}+B^{\Sigma'})\bigr)\big/I_{G}B$, as well as the map
		\[
		\bigl(B[N_{G}]\cap B^{\Sigma}\bigr)\big/\bigl(I_{G}B\cap B^{\Sigma}\bigr) \longrightarrow\Bigl(B[N_{G}]\cap \bigl(B^{\Sigma}+B^{\Sigma'}\bigr)\Bigr)\big/I_{G}B
		\]
		induced by the inclusion $B[N_{G}]\cap B^{\Sigma} \subseteq B[N_{G}]\cap \bigl(B^{\Sigma}+B^{\Sigma'}\bigr)$. The above map is clearly injective and is also surjective since
		\begin{equation*}\label{eq:bsigmaigb}
		B^{\Sigma} + I_{G}B = B^{\Sigma} + B^{\Sigma'}
		\end{equation*}
		by \eqref{eq:BSigma'}. This, together with the exact sequence in \eqref{eq:sesH-1DB}, proves the first isomorphism of the lemma.
		
		For the second isomorphism, note that, by Proposition \ref{fixcohom}, we have an isomorphism of abelian groups
		\[
		\HHN{1}{D}{B} \cong \HH{-1}{G}{B}/ \HH{-1}{D}{B}.
		\qedhere
		\]
\end{proof}

Observe that the proof of Lemma \ref{lemma:quozcoom} actually shows that
\begin{equation}\label{eq:quozcoom}
\HH{-1}{G}{B}^+ = \Bigl(B[N_G]\cap \bigl(B^{\Sigma}+B^{\Sigma'}\bigr)\Bigr)\Big/I_{G}B \quad \textrm{and}\quad \HH{-1}{G}{B}^- = B[N_{G}]\Big/\Bigl(\bigl(B^{\Sigma}+B^{\Sigma'}\bigr)\cap B[N_G]\Bigr).
\end{equation}

Another consequence of Lemma \ref{lemma:2incl} is the following.
\begin{lemma}\label{lemma:split_Walter}
Let $B$ be a uniquely $q$-divisible finite $D$-module. There is a direct-sum decomposition
\[
B/B^D\cong B^G/B^D\oplus \bigl(B^{\Sigma}+B^{\Sigma'}\bigr)/B^D
\]
which implies, upon taking the quotient by $B^G/B^D$,
\[
B/B^G\cong B^\Sigma/B^D\oplus B^{\Sigma'}/B^D.
\]
\end{lemma}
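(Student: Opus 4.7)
The plan is to establish the $D$-module decomposition $B = B^G \oplus I_G B$, which holds because $|G| = q$ is invertible on $B$, and then combine it with Lemma \ref{lemma:2incl}. Since $B$ is uniquely $q$-divisible, the element $e_G = \tfrac{1}{q}N_G$ defines a central idempotent in $\operatorname{End}_D(B)$ (centrality comes from $G$ being normal in $D$), whose image is $B^G$ and whose kernel is $B[N_G] = I_G B$. With this splitting in hand, the first isomorphism will follow from the two facts (a) $B = B^G + (B^\Sigma + B^{\Sigma'})$ and (b) $B^G \cap (B^\Sigma + B^{\Sigma'}) = B^D$: together these give $B/B^D = B^G/B^D \oplus (B^\Sigma + B^{\Sigma'})/B^D$ by the second isomorphism theorem.

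Fact (a) is immediate: Lemma \ref{lemma:2incl} gives $I_G B \subseteq B^\Sigma + B^{\Sigma'}$, so $B = B^G + I_G B \subseteq B^G + B^\Sigma + B^{\Sigma'}$. For (b), I would take $x = a + b \in B^G$ with $a \in B^\Sigma$ and $b \in B^{\Sigma'}$, and decompose $a = a^G + a^I$ and $b = b^G + b^I$ along $B = B^G \oplus I_G B$. The $D$-equivariance of this splitting implies $a^G, a^I \in B^\Sigma$ and $b^G, b^I \in B^{\Sigma'}$. Equating the $I_G B$-components of $x$ yields $a^I + b^I = 0$, so $a^I = -b^I \in B^\Sigma \cap B^{\Sigma'} = B^{\langle \Sigma, \Sigma'\rangle} = B^D \subseteq B^G$; combined with $a^I \in I_G B$, this forces $a^I = 0$, and symmetrically $b^I = 0$. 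Hence $a = a^G \in B^G \cap B^\Sigma = B^D$, and similarly $b \in B^D$, so $x \in B^D$.

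For the second isomorphism, quotienting the first by the summand $B^G/B^D$ yields $B/B^G \cong (B^\Sigma + B^{\Sigma'})/B^D$. Since $\sigma\cdot\sigma\rho = \rho$, the subgroups $\Sigma$ and $\Sigma'$ generate all of $D$, so $B^\Sigma \cap B^{\Sigma'} = B^D$, and the natural map $B^\Sigma/B^D \oplus B^{\Sigma'}/B^D \to (B^\Sigma + B^{\Sigma'})/B^D$ is then easily checked to be a bijection. No serious obstacle appears: the whole argument rests on the $D$-equivariant splitting $B = B^G \oplus I_G B$ and on two straightforward computations inside the dihedral group.
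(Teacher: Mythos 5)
Your argument is correct, but it takes a genuinely different route from the paper's. The paper obtains the sum decomposition $B=B^G+(B^{\Sigma}+B^{\Sigma'})$ from the explicit relation $N_D-N_G-\sum_{\widetilde{\Sigma}}N_{\widetilde{\Sigma}}=-q$ in $\Z[D]$ (sum over all order-$2$ subgroups), which after inverting $q$ gives $B=B^G+\sum_{\widetilde{\Sigma}}B^{\widetilde{\Sigma}}$, and then reduces $\sum_{\widetilde{\Sigma}}B^{\widetilde{\Sigma}}$ to $B^{\Sigma}+B^{\Sigma'}$ by conjugating with powers of $\rho$ and invoking Lemma \ref{lemma:2incl}; directness modulo $B^D$ is then checked by writing $b=\tfrac{1}{q}N_Gb_1+\tfrac{1}{q}N_Gb_2$ and observing $N_Gb_i\in B^D$. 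You instead start from the $D$-equivariant idempotent splitting $B=B^G\oplus I_GB$ coming from $e_G=\tfrac1q N_G$ (the identification $\kernel{e_G}=B[N_G]=I_GB$ being the vanishing of $\HH{-1}{G}{B}$ for a uniquely $q$-divisible module, which you correctly flag), get surjectivity from Lemma \ref{lemma:2incl} exactly as the paper does, and prove directness by projecting the components $a\in B^{\Sigma}$, $b\in B^{\Sigma'}$ onto $I_GB$ and using $B^{\Sigma}\cap B^{\Sigma'}=B^{\langle\Sigma,\Sigma'\rangle}=B^D$. Your route avoids both the computation in $\Z[D]$ and the reduction from all order-$2$ subgroups to two of them, at the cost of a slightly longer directness check; it also has the merit of making explicit the fact $B^{\Sigma}\cap B^{\Sigma'}=B^D$, which is exactly what is needed for the second displayed isomorphism and which the paper leaves implicit. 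Neither proof uses the finiteness hypothesis on $B$.
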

\begin{proof}
We are thankful to the referee for suggesting us the following proof, which simplifies the one in our original version.

One can easily check that there is a relation
\[
N_D - N_G - \sum_{\begin{subarray}{l}\widetilde{\Sigma}< D\\ \vert\widetilde{\Sigma}\rvert=2\end{subarray}}N_{\widetilde{\Sigma}} = -q
\]
in $\Z[D]$. In particular, since multiplication by $q$ is an automorphism of $B$, we have
\[
B = \biggl(N_D - N_G - \sum_{\begin{subarray}{l}\widetilde{\Sigma}< D\\ \vert\widetilde{\Sigma}\rvert=2\end{subarray}}N_{\widetilde{\Sigma}}\biggr)B \subseteq B^G+\sum_{\begin{subarray}{l}\widetilde{\Sigma}< D\\ \vert\widetilde{\Sigma}\rvert=2\end{subarray}} B^{\widetilde{\Sigma}} \subseteq B,
\]
so that in fact
\begin{equation}\label{eq:Bdecomp}
B = B^G+\sum_{\begin{subarray}{l}\widetilde{\Sigma}< D\\ \vert\widetilde{\Sigma}\rvert=2\end{subarray}}B^{\widetilde{\Sigma}}.
\end{equation}
Now we claim that 
\begin{equation}\label{eq:allintwo}
\sum_{\begin{subarray}{l}\widetilde{\Sigma}< D\\ \vert\widetilde{\Sigma}\rvert=2\end{subarray}}B^{\widetilde{\Sigma}}= B^\Sigma + B^{\Sigma'}.
\end{equation}
To see this, it is enough to show that for every subgroup $\widetilde{\Sigma}\subset D$ of order $2$, we have $B^{\widetilde{\Sigma}}\subseteq B^{\Sigma} + B^{\Sigma'}$. Let $\widetilde{\Sigma}=\langle\sigma\rho^{i}\rangle$ where either $i=2j$ or $i=2j+1$ for some $j\in\Z$ and let $b\in B^{\widetilde{\Sigma}}$. Then $\rho^jb\in B^{\rho^j \widetilde{\Sigma}\rho^{-j}}$ and $\rho^j \widetilde{\Sigma}\rho^{-j} = \langle\sigma \rho^{i-2j}\rangle$ is either $\Sigma$ or $\Sigma'$, according as $i$ is even or odd. Thus
\[
b=(1-\rho^j)b + \rho^jb \in I_GB + (B^{\Sigma} + B^{\Sigma'}) =B^{\Sigma} + B^{\Sigma'}
\]
by Lemma \ref{lemma:2incl}. Therefore \eqref{eq:allintwo} holds and \eqref{eq:Bdecomp} becomes
\[
B = B^G+ (B^{\Sigma} + B^{\Sigma'}).
\] 
To show that this sum is direct modulo $B^D$, let $b\in B^G\cap  (B^{\Sigma} + B^{\Sigma'})$, so that $b=b_1+b_2$ with $b_1\in B^\Sigma$ and $b_2\in B^{\Sigma'}$. Then
\[
b = \frac{1}{q}N_Gb = \frac{1}{q}N_Gb_1 + \frac{1}{q}N_Gb_2,
\]
which implies $b\in B^D$ since $N_Gb_i \in B^D$ for $i=1,2$.
\end{proof}

To use the above results in the next sections, we systematically tensor the module of interest with $\pd{\Z}$. When no ambiguity is possible, if $f\colon B\to B'$ is a homomorphism of abelian groups we will still denote by $f$ the map $\pd{f}\colon\pd{B}\to \pd{B'}$.
	
We constantly use  that, for every $D$-module $B$ and every subgroup $H\subseteq D$, there are isomorphisms
	\[
	\HH*{i}{H}{\pd{B}}\cong \pd{\HH{i}{H}{B}}
	\]
for all $i\in\Z$. This is quite straightforward, but we sketch an argument, writing
\[
\Lambda=\Z[H]\qquad\text{and}\qquad\Lambda'=\pd{\Z[H]}.
\]
The fact that $\pd{\Z}$ is $\Z$-flat implies that $\pd{\HHN{0}{H}{B}}=\pd{\operatorname{Hom}_{\Lambda}(\Z,B)}$ coincides with $\operatorname{Hom}_{\Lambda'}(\pd{\Z},\pd{B})$. On the other hand, a $\Lambda'$-linear homomorphism from $\pd{\Z}$ to $\pd{B}$ is uniquely determined by its value on $1$, as is every $\Lambda$-homomorphism from $\Z$ to $\pd{B}$, showing that $\operatorname{Hom}_{\Lambda'}(\pd{\Z},\pd{B})=\operatorname{Hom}_{\Lambda}(\Z,\pd{B})=\HHN{0}{H}{\pd{B}}$. Therefore the right-derived functors of the functor $\mathcal{F}_1\colon B\mapsto\HHN{0}{H}{\pd{B}}$ and of $\mathcal{F}_2\colon B\mapsto \pd{\HHN{0}{H}{B}}$ coincide. Those of the first are $R^i\mathcal{F}_1(B)= \HHN{i}{H}{\pd{B}}$, and those of the second are $R^i\mathcal{F}_2(B)=\pd{\HHN{i}{H}{B}}$, for $i\geq 0$: this follows from a computation with the Grothendieck spectral sequence of a composition of functors, using that $-\otimes \pd{\Z}$ is acyclic by assumption. It follows that $\HHN{i}{H}{\pd{B}}$ and $\pd{\HHN{i}{H}{B}}$ are isomorphic. A similar argument is valid for $H$-homology. The remaining cases of Tate cohomology in degrees $-1,0$ can be treated directly by writing the Tate cohomology groups as a kernel and a cokernel, respectively (\cfr \cite[Chap.~XII, \S~2]{CE}).

For an abelian group $B$ and a natural number $n$, we set $\tors[n]{B}=\{b\in B : nb=0\}$. 

\begin{lemma}\label{lemma:schifo=nucleo} 
Let $B$ be $D$-module such that $\tors{B}=\tors{B}^G$. Then the inclusion $\tors{B}\subseteq B$ induces equalities
\[
\kernel\bigl(\HH{-1}{G}{\tors{B}}\longrightarrow\HH{-1}{G}{B}\bigr)=I_GB\cap \tors{B}^G=I_GB\cap B^G.
\]
If, moreover, $B$ is uniquely $2$-divisible then
\[
\kernel\bigl(\HH{-1}{D}{\tors{B}}\longrightarrow\HH{-1}{D}{B}\bigr)=I_GB\cap \tors{B}^D=I_GB\cap B^D.
\]
\end{lemma}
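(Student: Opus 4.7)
The plan is to make the map $\HH{-1}{G}{\tors{B}}\to\HH{-1}{G}{B}$ completely explicit. Since $\tors{B}=\tors{B}^G$ by hypothesis, $G$ acts trivially on $\tors{B}$, so $I_G\tors{B}=0$; moreover $N_G$ acts on $\tors{B}$ as multiplication by $q$, which annihilates the $q$-torsion. Hence $\HH{-1}{G}{\tors{B}}=\tors{B}[N_G]/I_G\tors{B}=\tors{B}$, and the map in question becomes the obvious $\tors{B}\hookrightarrow B[N_G]\twoheadrightarrow B[N_G]/I_GB$. Its kernel is $\tors{B}\cap I_GB=\tors{B}^G\cap I_GB$, which proves the first equality.

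For $I_GB\cap\tors{B}^G=I_GB\cap B^G$, only the inclusion $\supseteq$ is non-trivial: any $x\in I_GB\cap B^G$ is annihilated by $N_G$ because $N_G$ kills $I_GB$, while $N_Gx=qx$ since $x\in B^G$; hence $qx=0$ and $x\in\tors{B}\cap B^G=\tors{B}^G$, again using the hypothesis.

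For the $D$-version I would first note that $\tors{B}$ inherits unique $2$-divisibility from $B$ (multiplication by $2$ is an automorphism of $B$ that preserves the $q$-torsion), so Proposition~\ref{fixcohom} identifies the map $\HH{-1}{D}{\tors{B}}\to\HH{-1}{D}{B}$ with the restriction to $+$-parts of the map on $G$-cohomology already computed. The kernel $\tors{B}\cap I_GB$ of the latter is uniquely $2$-divisible, being the intersection of the uniquely $2$-divisible subgroups $\tors{B}$ and $I_GB$ of the uniquely $2$-divisible ambient group $B$, so its $+$-part agrees with its $\Delta$-invariants. Since this kernel already sits in $B^G$, $\Delta$-invariance coincides with $D$-invariance, and we get $(\tors{B}\cap I_GB)^\Delta=\tors{B}\cap I_GB\cap B^D=\tors{B}^D\cap I_GB$. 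The final identification $I_GB\cap\tors{B}^D=I_GB\cap B^D$ follows verbatim from the $N_G$-argument of the second paragraph, now applied to $B^D\subseteq B^G$. The whole proof is essentially bookkeeping; the only step requiring a bit of care is the verification of unique $2$-divisibility so that the $\pm$-decomposition applies.
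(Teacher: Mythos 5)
Your argument is correct and follows the paper's proof essentially verbatim: the same explicit computation of $\HH{-1}{G}{\tors{B}}=\tors{B}$ and of the kernel, the same $N_G x = qx$ argument for $I_GB\cap\tors{B}^G=I_GB\cap B^G$, and the same passage to plus parts via Proposition~\ref{fixcohom} for the $D$-statement. Your third paragraph merely spells out details (unique $2$-divisibility of $\tors{B}\cap I_GB$, and the identification of $\Delta$-invariants with $D$-invariants inside $B^G$) that the paper compresses into one sentence.
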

\begin{proof} 
Since $\tors{B}$ has trivial action of $G$ by assumption, $I_G(\tors{B})=0$ and $N_G$ is multiplication by $q$ on $\tors{B}$ and so is the trivial map. Hence $\HH{-1}{G}{\tors{B}}=\tors{B}$ and we obtain the first equality of the statement since
\[
\kernel\bigl(\HH{-1}{G}{\tors{B}}\longrightarrow\HH{-1}{G}{B}\bigr)= \kernel\bigl(\tors{B}\longrightarrow B[N_G]/I_GB\bigr)=I_GB \cap \tors{B}=I_GB \cap \tors{B}^G.
\]
As for the second, there is an obvious inclusion $I_GB\cap \tors{B}^G\subseteq I_GB\cap B^G$. To prove the reverse inclusion it is enough to observe that every $x\in I_GB\cap B^G$ satisfies $qx=N_Gx$ (because $x\in B^G$) but also $N_Gx=0$ (since $x\in I_GB\subseteq B[N_G]$). Thus we have $x\in I_GB\cap B^G\cap \tors{B}=I_GB\cap \tors{B}^G$.

When $B$ is uniquely $2$-divisible, by taking plus parts in the above equalities and applying Proposition \ref{fixcohom}, we obtain the second claim.
\end{proof}

We conclude this section with two lemmas which give some relations between the cohomology of $D$ and that of its subgroups, without the assumption that the underlying module is uniquely $2$-divisible.

\begin{lemma}\label{lemma:Sylowtrivialcohom}
Let $H\subseteq D$ be a subgroup, let $B$ be an $H$-module and let $j\in\Z$ be an integer. Suppose that, for every prime $p$, $\HH{j}{H_p}{B}=0$ where $H_p$ is a $p$-Sylow subgroup of $H$. Then $\HH{i}{H}{B}=0$ for every $i\equiv j \pmod{2}$.
\end{lemma}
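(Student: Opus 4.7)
The plan is to combine three standard facts: the Tate cohomology of a finite group is torsion, restriction to a Sylow subgroup is injective on the corresponding primary component, and the Tate cohomology of a cyclic group is periodic of period $2$.

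First I would observe that, since $H\subseteq D$ is finite, every $\HH{i}{H}{B}$ is annihilated by $\gorder{H}$ and therefore splits as the direct sum of its $p$-primary components. Thus it suffices to prove, for every prime $p$ and every $i\equiv j\pmod{2}$, that the $p$-primary part of $\HH{i}{H}{B}$ vanishes. Fixing $p$ and a $p$-Sylow $H_p\subseteq H$, the index $[H:H_p]$ is coprime to $p$, so the standard identity $\operatorname{cor}_{H/H_p}\circ\res_{H/H_p}=[H:H_p]$ shows that restriction induces an injection of the $p$-primary part of $\HH{i}{H}{B}$ into $\HH{i}{H_p}{B}$. The task is therefore reduced to showing $\HH{j+2n}{H_p}{B}=0$ for every $n\in\Z$ and every prime $p$.

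The structural input I would then invoke is that every Sylow subgroup of every subgroup of $D$ is cyclic: any subgroup of $D$ is either contained in $G=\langle\rho\rangle$, and hence cyclic of order dividing the odd integer $q$, or is itself dihedral of order $2d$ with $d\mid q$; in the latter case the $2$-Sylow has order $2$ while, for odd $p$, the $p$-Sylow is contained in the cyclic subgroup of order $d$. Since the Tate cohomology of a cyclic group is periodic of period $2$, the hypothesis $\HH{j}{H_p}{B}=0$ immediately propagates to all degrees congruent to $j$ modulo $2$, completing the proof.

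The argument is essentially routine: no single step is a genuine obstacle. The only conceptually substantive point is the observation that Sylow subgroups of subgroups of $D$ are all cyclic, which is what allows period-$2$ periodicity to do the heavy lifting despite the period-$4$ behaviour of $D$ itself.
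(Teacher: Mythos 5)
Your argument is correct and follows the same route as the paper: decompose $\HH{i}{H}{B}$ into $p$-primary components, use injectivity of restriction to a Sylow subgroup on the corresponding component, and exploit that all Sylow subgroups of subgroups of $D$ are cyclic so that period-$2$ periodicity propagates the vanishing from degree $j$ to all degrees $i\equiv j\pmod 2$. Your write-up merely makes explicit two points the paper leaves implicit (the corestriction-restriction identity and the verification that the Sylow subgroups are cyclic), so there is nothing to add.
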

\begin{proof}
Under our assumptions, for every prime $p$, $\HH{i}{H_p}{B}$ is trivial for every $i\equiv j \pmod{2}$ since $p$-Sylow subgroups of $H$ are cyclic and the Tate cohomology of cyclic groups is periodic of period $2$. Since the restriction map
\[
\HH{i}{H}{B} \longrightarrow \HH{i}{H_p}{B}
\]
is injective on the $p$-primary component of $\HH{i}{H}{B}$ we deduce that $\HH{i}{H}{B}=0$ for every $i\equiv j \pmod{2}$.
\end{proof}	

\begin{lemma}\label{lemma:restr2Sylow}
Let $B$ a $D$-module. Then, for every $i\in\Z$, the restriction 
\[
\res_\Sigma^D\colon\HH{i}{D}{B} \longrightarrow \HH{i}{\Sigma}{B}
\]
maps the $2$-primary component of $\HH{i}{D}{B}$ isomorphically onto $\HH{i}{\Sigma}{B}$.
\end{lemma}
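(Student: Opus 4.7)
The plan is to exploit that $\Sigma$, having order $2$, is a $2$-Sylow subgroup of $D$ since $\gorder{D}=2q$ with $q$ odd. I would then combine the two standard identities $\operatorname{cor}_\Sigma^D\circ\res_\Sigma^D=[D:\Sigma]\cdot\mathrm{id}=q\cdot\mathrm{id}$ on $\HH{i}{D}{B}$ with the Mackey double coset formula computing $\res_\Sigma^D\circ\operatorname{cor}_\Sigma^D$ on $\HH{i}{\Sigma}{B}$.

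First I would establish injectivity on the $2$-primary part: the composition $\operatorname{cor}_\Sigma^D\circ\res_\Sigma^D$ is multiplication by $q$, and since $q$ is odd this is an automorphism of the $2$-primary component of $\HH{i}{D}{B}$; hence $\res_\Sigma^D$ is injective there. Moreover $\HH{i}{\Sigma}{B}$ is killed by $\gorder{\Sigma}=2$, so the target is entirely $2$-primary and the image of $\res_\Sigma^D$ lies in it automatically. In particular, the kernel of $\res_\Sigma^D$ contains every element of odd order in $\HH{i}{D}{B}$, while $\res_\Sigma^D$ is an injection on the $2$-primary part.

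For surjectivity, I would apply Mackey's double coset formula to $\res_\Sigma^D\circ\operatorname{cor}_\Sigma^D$ acting on $\HH{i}{\Sigma}{B}$. The trivial double coset $\Sigma\cdot 1\cdot\Sigma$ contributes the identity. Every other double coset has a representative $\rho^j$ with $1\leq j\leq (q-1)/2$, and using the relation $\sigma\rho=\rho^{-1}\sigma$ one computes $\rho^j\Sigma\rho^{-j}=\langle\sigma\rho^{-2j}\rangle$, which is a subgroup of order $2$ distinct from $\Sigma$; hence $\Sigma\cap\rho^j\Sigma\rho^{-j}=\{1\}$. The corresponding term in Mackey's formula therefore factors through the Tate cohomology $\HH{i}{\{1\}}{B}$, which vanishes in every degree. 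Thus $\res_\Sigma^D\circ\operatorname{cor}_\Sigma^D=\mathrm{id}$ on $\HH{i}{\Sigma}{B}$, exhibiting $\operatorname{cor}_\Sigma^D$ as a section of $\res_\Sigma^D$ and in particular giving surjectivity onto $\HH{i}{\Sigma}{B}$.

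The only delicate point is invoking Mackey's double coset formula in the Tate cohomology setting: it is classically established for ordinary cohomology and must be propagated to Tate cohomology in negative degrees (typically via dimension shifting or by unwinding the explicit description of $\res$ and $\operatorname{cor}$ in degrees $-1$ and $0$). Once that is in place, the vanishing of every non-trivial Mackey term is immediate from the triviality of all the intersections $\Sigma\cap\rho^j\Sigma\rho^{-j}$, and the two identities combine to give the claimed isomorphism.
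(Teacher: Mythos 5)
Your proof is correct and follows essentially the same route as the paper's: both arguments exploit that $\Sigma$ is a $2$-Sylow subgroup of $D$ whose distinct conjugates intersect trivially, obtaining injectivity on the $2$-primary component from $\operatorname{cor}_\Sigma^D\circ\res_\Sigma^D=q\cdot\mathrm{id}$ and surjectivity from the triviality of $\Sigma\cap\tau\Sigma\tau^{-1}$ for $\tau\notin\Sigma$. The only cosmetic difference is in the surjectivity step, where you compute $\res_\Sigma^D\circ\operatorname{cor}_\Sigma^D=\mathrm{id}$ directly via Mackey's double coset formula (whose validity in Tate cohomology you rightly flag as the one point needing care), whereas the paper invokes the equivalent Cartan--Eilenberg stable-elements criterion and checks that every element of $\HH{i}{\Sigma}{B}$ is stable.
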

\begin{proof}
		As observed in the proof of Lemma \ref{lemma:Sylowtrivialcohom}, we only need to show surjectivity. The image of the $2$-component is the subset of stable elements of $\HH{i}{\Sigma}{B}$ (see \cite[Chapter XII,~Theorem~10.1]{CE}). Recall that an element $a\in\HH{i}{\Sigma}{B}$ is called stable if, for every $\tau \in D$,
		\[
		\res_{\tau\Sigma\tau^{-1}\cap \Sigma}^\Sigma(a)=\res_{\tau\Sigma\tau^{-1}\cap \Sigma}^{\tau\Sigma\tau^{-1}}c_\tau(a)
		\]
		where $c_\tau$ denotes the map $\HH{i}{\Sigma}{B}\to\HH{i}{\tau\Sigma\tau^{-1}}{B}$ induced by conjugation by $\tau$. When $\tau\notin\Sigma$, then $\tau\Sigma\tau^{-1}\cap \Sigma$ is trivial and the above equality holds for every $a\in \HH{i}{\Sigma}{B}$. When $\tau\in\Sigma$, then $\tau\Sigma\tau^{-1}=\Sigma$ and the two restriction maps appearing in the above equality are in fact the identity map. The same holds for $c_\tau$ (see \cite[Chapter~XII, \S~8, (5)]{CE}), proving that every element in $\HH{i}{\Sigma}{B}$ is stable. 
	\end{proof}

\section{Class number formula}\label{sec:classnumberformula}
We now come to the proof of the odd part of the class number formula. Still under the notation introduced in the previous section, we consider the following arithmetic setting. Let $k$ be a number field and let $L/k$ be a Galois extension whose Galois group is isomorphic to $D$. We fix such an isomorphism and we simply write $D=\Gal(L/k)$. Let $F/k$ be the subextension of $L/k$ fixed by $G$ and let $K/k$ (\rsp $K'/k$) be the subextension of $L/k$ fixed by $\Sigma$ (\rsp $\Sigma'$). In particular $F/k$ is quadratic with Galois group $\Delta\df\Gal(F/k)$.
	
We begin by introducing a four-term exact sequence involving the class groups of $L$ and $K$. By looking at the orders of groups in this sequence, we already get a formula involving the odd components of class numbers of $L$ and $K$. The rest of the section is devoted to expressing the remaining factors in terms of the odd components of the class numbers of $F$ and $k$ and the orders of the cohomology group of units of $L$. As a corollary of the formula, using the value of the Herbrand quotient of units for the extension $L/F$, we easily obtain bounds on the ratio of class numbers of subfields of $L$.     

\begin{notation}	
For a number field $M$, we write $\clg{M}$ for the class group of $M$. For an extension of number fields $M_2/M_1$, $\arext{M_2}{M_1}\colon\clg{M_1}\to \clg{M_2}$ and $\arnm{M_2}{M_1}\colon\clg{M_2}\to \clg{M_1}$ denote the map induced by extending ideals from $M_1$ to $M_2$ and by taking the norm of ideals from $M_2$ to $M_1$, respectively.
\end{notation}

Consider the exact sequence
	\begin{equation}\label{nuccioskernelgen}
	1\to \kernel{\eta}\longrightarrow \clg{K} \oplus \clg{K'} \stackrel{\eta}{\longrightarrow} \clg{L} \longrightarrow \clg{L}/\im{\eta}\to 1
	\end{equation}
	where the map $\eta$ is defined via the formula
	\begin{equation}\label{eq:nucciomap}
	\eta\left(c,c'\right)=\arext{L}{K}(c)\arext{L}{K'}(c') \qquad \text{ for }c\in \clg{K}, c'\in \clg{K'}.
	\end{equation}
 Tensoring with $\pd{\Z}$, we immediately obtain the formula
\begin{equation}\label{eq:formula_cl_fixed}
\gorder*{\clgp{L}} = \gorder*{\clgp{K}}^2 \frac{\gindex*{\clgp{L}}{\im{\pd{\eta}}}}{\gorder*{ \kernel{\pd{\eta}} }}.
\end{equation}
Observe also that the map $\pd{\arext{L}{K}}\colon\clgp{K}\to\clgp{L}$ is injective, since $\arnm{L}{K}\circ\arext{L}{K}$ is multiplication by $2=[L:K]$ on $\clg{K}$.
	
\begin{remark}
Let $B$ be an abelian group and $B'\subseteq B$ a subgroup of $B$. Let $f\colon B\to C$ be a homomorphism of abelian groups. Then there is an exact sequence
\[
0\longrightarrow \kernel{f}/\bigl(\kernel{f}\cap B'\bigr)\longrightarrow B/B'\longrightarrow f(B)/f(B')\longrightarrow 0.
\]
It follows in particular that
\begin{equation}\label{eq:snake}
\gindex{B}{B'}=\gindex{f(B)}{f(B')}\cdot\gindex{\kernel{f}}{\kernel{f}\cap B'},
\end{equation}
meaning that if two of the above indices are finite, so is the third and the above equality holds.
\end{remark}
	For a group $H$ and a $H$-module $B$, we denote by $\hh{i}{H}{B}$ (\rsp $\hhN{i}{H}{B}$) the order of $\HH{i}{H}{B}$ (\rsp the order of $\HHN{i}{H}{B}$), whenever this is defined.
	
	\begin{proposition}\label{loddpart}
		We have an equality
		\[
		\gindex*{\clgp{L}}{\pd{\im{\eta}}}  = \frac{\gorder*{\clgp{L}^G}}{\hh*{-1}{D}{\clgp{L}}\cdot \gorder*{N_D \clgp{L} }}
		\]
		and an isomorphism
		\[
		\pd{\kernel{\eta}}\cong \HHN*{0}{D}{\clgp{L}}.
		\] 
		In particular
		\[
		\gorder*{\clgp{L}} = \gorder*{\clgp{K}}^2 \frac{\gorder*{\clgp{L}^G}}{\hhN*{0}{D}{\clgp{L}}\hh*{-1}{D}{\clgp{L}}\cdot \gorder*{N_D \clgp{L} }}.
		\]
	\end{proposition}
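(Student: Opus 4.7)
The plan is to identify $\im{\pd\eta}$ and $\kernel{\pd\eta}$ explicitly and then exploit the plus/minus machinery developed in Section~\ref{sec:cohomprelim}. The starting point is that, after inverting $2$, the extension map $\pd{\arext{L}{K}}\colon\clgp{K}\to\clgp{L}$ is injective with image equal to $\clgp{L}^\Sigma$: injectivity follows from $\arnm{L}{K}\circ\arext{L}{K}=2$, while $\arext{L}{K}\circ\arnm{L}{K}=N_\Sigma$ together with the surjectivity of $\pd{\arnm{L}{K}}$ (again from $\arnm{L}{K}\circ\arext{L}{K}=2$) and the identity $N_\Sigma\clgp{L}=\clgp{L}^\Sigma$ (valid because $\clgp{L}$ is uniquely $2$-divisible) pins down the image. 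The analogous statement holds for $\pd{\arext{L}{K'}}$. Through these identifications $\pd\eta$ becomes the sum map $\clgp{L}^\Sigma\oplus\clgp{L}^{\Sigma'}\to\clgp{L}$, so $\im{\pd\eta}=\clgp{L}^\Sigma+\clgp{L}^{\Sigma'}$ and $\kernel{\pd\eta}\cong\clgp{L}^\Sigma\cap\clgp{L}^{\Sigma'}=\clgp{L}^D=\HHN*{0}{D}{\clgp{L}}$, the last equality holding because $D=\langle\sigma,\sigma\rho\rangle$.

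Next I would use \eqref{eq:BSigma'} to rewrite $\im{\pd\eta}=\clgp{L}^\Sigma+I_G\clgp{L}$, so that $\clgp{L}/\im{\pd\eta}$ is naturally a quotient of the coinvariants $(\clgp{L})_G$. An averaging argument shows that the canonical projection $\pi\colon\clgp{L}\to(\clgp{L})_G$ sends $\clgp{L}^\Sigma$ \emph{onto} the $\Sigma$-invariants $((\clgp{L})_G)^\Sigma$: given $\bar y\in((\clgp{L})_G)^\Sigma$, any lift $y$ satisfies $y-\sigma y\in I_G\clgp{L}$, hence $\tfrac{1+\sigma}{2}y\in\clgp{L}^\Sigma$ is again a lift of $\bar y$. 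Since $(\clgp{L})_G$ is uniquely $2$-divisible, the decomposition \eqref{2divid} gives $\clgp{L}/\im{\pd\eta}\cong((\clgp{L})_G)^-$.

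To compute the order of this $-$-part I would apply the exact $-$-functor to the tautological short exact sequence
\[
0\longrightarrow\HH*{-1}{G}{\clgp{L}}\longrightarrow(\clgp{L})_G\xrightarrow{N_G}N_G\clgp{L}\longrightarrow 0,
\]
obtaining $|((\clgp{L})_G)^-|=|\HH*{-1}{G}{\clgp{L}}^-|\cdot|(N_G\clgp{L})^-|$. Since $N_G$ commutes with $\sigma$, the $+$-part of $N_G\clgp{L}$ equals $N_\Sigma(N_G\clgp{L})=N_D\clgp{L}$, so $|(N_G\clgp{L})^-|=|N_G\clgp{L}|/|N_D\clgp{L}|$. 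Combined with $|N_G\clgp{L}|=|\clgp{L}^G|/|\HH*{0}{G}{\clgp{L}}|$, the equality $|\HH*{0}{G}{\clgp{L}}|=|\HH*{-1}{G}{\clgp{L}}|$ coming from the triviality of the Herbrand quotient of a finite module over the cyclic group $G$, and the factorization $|\HH*{-1}{G}{\clgp{L}}|=|\HH*{-1}{D}{\clgp{L}}|\cdot|\HH*{-1}{G}{\clgp{L}}^-|$ granted by Proposition~\ref{fixcohom}, the factor $|\HH*{-1}{G}{\clgp{L}}^-|$ cancels and yields
\[
\gindex*{\clgp{L}}{\im{\pd\eta}}=\frac{|\clgp{L}^G|}{\hh*{-1}{D}{\clgp{L}}\cdot|N_D\clgp{L}|}.
\]
Substituting this together with $|\kernel{\pd\eta}|=\hhN*{0}{D}{\clgp{L}}$ into \eqref{eq:formula_cl_fixed} will give the final identity. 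The only subtle step I foresee is the averaging argument of the second paragraph, where I need to verify that $\pi(\clgp{L}^\Sigma)$ \emph{exhausts} $((\clgp{L})_G)^\Sigma$ rather than merely being contained in it; the rest is careful bookkeeping with standard cohomological identities.
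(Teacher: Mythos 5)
Your proposal is correct: the identifications $\pd{\im{\eta}}=\clgp{L}^{\Sigma}+\clgp{L}^{\Sigma'}$ and $\pd{\kernel{\eta}}\cong\clgp{L}^{D}=\HHN*{0}{D}{\clgp{L}}$ agree with the paper's (your map $(c,c')\mapsto\arext{L}{K}(c)$ is exactly the paper's $\theta$), and the averaging argument you flag as the delicate point does go through, since $I_G\clgp{L}$ is uniquely $2$-divisible and $\Sigma$-stable. Where you diverge is in the index computation. The paper first splits the index using \eqref{eq:snake} with $f=N_G$, getting $\gindex*{\clgp{L}}{\pd{\im{\eta}}}=\gindex*{N_G\clgp{L}}{N_D\clgp{L}}\cdot\hhN*{1}{D}{\clgp{L}}$, where the second factor is supplied by Lemma \ref{lemma:quozcoom}; you instead identify the whole quotient at once, $\clgp{L}/\pd{\im{\eta}}\cong\bigl((\clgp{L})_G\bigr)^-$ via \eqref{eq:BSigma'} and \eqref{2divid}, and only then split along the coinvariants sequence $0\to\HH*{-1}{G}{\clgp{L}}\to(\clgp{L})_G\to N_G\clgp{L}\to 0$. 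The two routes consume the same inputs (Lemma \ref{lemma:2incl}, Proposition \ref{fixcohom}, triviality of the Herbrand quotient of a finite module over the cyclic group $G$) and the cancellations are identical; what your packaging buys is that it bypasses Lemma \ref{lemma:quozcoom} entirely and makes the quotient $\clgp{L}/\pd{\im{\eta}}$ visible as a single minus-part, at the cost of having to justify exactness of the minus-part functor on the coinvariants sequence and the $\Delta$-equivariance of $N_G$ (both of which hold, as $\sigma N_G=N_G\sigma$ in $\Z[D]$). Either version is a complete proof.
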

	\begin{proof}
Observe that, using equation \eqref{eq:snake} applied to $f=N_G$, $B=\clg{L}$, $B'=\im{\eta}$, we get
	\[
	\gindex{\clg{L}}{\im{\eta}} =\gindex{N_G \clg{L}}{N_G(\im{\eta})}\cdot \gindex{\clg{L}[N_G]}{\im{\eta}\cap \clg{L}[N_G]} .
	\]
Now observe that 
	\[
	\im{\eta} = \arext{L}{K}(\clg{K})\arext{L}{K'}(\clg{K'})
	\]
and
\begin{equation}\label{eq:fixedequalincl}
\left(\clgp{L}\right)^{\Sigma}=\arext{L}{K}\bigl(\clgp{K}\bigr),\qquad\left(\clgp{L}\right)^{\Sigma'}=\arext{L}{K'}\bigl(\clgp{K'}\bigr)
\end{equation}
so that
\begin{equation*}
\pd{\im{\eta}}= \clgp{L}^{\Sigma}\clgp{L}^{\Sigma'}.
\end{equation*}
Therefore, using Lemma \ref{lemma:quozcoom}, we deduce that 
\begin{equation*}
\gindex*{\clgp{L}}{\pd{\im{\eta}}} =\gindex*{N_G \clgp{L} }{N_D \clgp{L} }\cdot \hhN*{1}{D}{\clgp{L}},
\end{equation*}
because
\[
N_D \clgp{L} =N_G\bigl(N_\Sigma \clgp{L} \bigr)=N_G\bigl(\clgp{L}^{\Sigma}\bigr)=N_G\bigl(\arext{L}{K}\bigl(\clgp{K}\bigr)\bigr)=N_G\bigl(\arext{L}{K'}\bigl(\clgp{K'}\bigr)\bigr),
\] 
since $\HH{0}{\Sigma}{\clgp{L}}=0$. Now observe that 
\[
\gorder*{N_G\clgp{L}} =\frac{\gorder*{\clgp{L}^G}}{\hh*{0}{G}{\clgp{L}}}.
\] 
The first equality of the proposition follows, since
\[
\frac{\hhN*{1}{D}{\clgp{L}}}{\hhN*{1}{G}{\clgp{L}}}=\frac{1}{\hh*{-1}{D}{\clgp{L}}}
\]
by Proposition \ref{fixcohom} and because the finiteness of $\pd{\clg{L}}$ implies $\hh{0}{G}{\pd{\clg{L}}} = \hhN{1}{G}{\pd{\clg{L}}}$.
	
As for the second statement of the proposition, let $\theta$ be the map
\[
\theta\colon\pd{\kernel{\eta}}\longrightarrow \HHN*{0}{D}{\pd{\clg{L}}}
\]
defined by $\theta(c,c')=\arext{L}{K}(c)$. Since $\arext{L}{K}(c)=\arext{L}{K'}(c')^{-1}$ in $\clg{L}$ by definition of $\eta$, both $\Sigma$ and $\Sigma'$ fix $\arext{L}{K}(c)$, so the map $\theta$ takes values in $\HHN{0}{D}{\clg{L}}$. We claim that it is an isomorphism. First, $\theta$ is injective because $\arext{L}{K}\colon\clgp{K}\to \clgp{L}$ is injective, as observed after equation \eqref{eq:formula_cl_fixed}. To check surjectivity, note that, since every $b\in \HHN{0}{D}{\pd{\clg{L}}}$ is fixed by both $\Sigma$ and $\Sigma'$, we can write such $b$ as $b=\arext{L}{K}(c)=\arext{L}{K'}(c')$ for some $c\in \pd{\clg{K}}$, $c'\in \pd{\clg{K'}}$, using \eqref{eq:fixedequalincl}. In particular $(c,(c')^{-1})\in\pd{\kernel{\eta}}$ and $b=\theta(c,(c')^{-1})$, which shows the claimed isomorphism and hence the second statement of the proposition.
		
		The last formula of the proposition comes from \eqref{eq:formula_cl_fixed} using the first two assertions.
	\end{proof}
	
We now want to translate cohomology groups of $\clgp{L}$ into cohomology groups of $\pd{\unit{L}}$. To achieve this we make intensive use of the following commutative diagram of Galois modules with exact rows and columns:
	\begin{equation}\begin{aligned}\label{diag:magic}
	\xymatrix{
	&1\ar@{->}[1,0]&1\ar@{->}[1,0]&1\ar@{->}[1,0]&\\
	1\ar@{->}[0,1]&\unit{M}\ar@{->}[0,1]\ar@{->}[1,0]&M^\times\ar@{->}[0,1]\ar@{->}[1,0]&\princ{M}\ar@{->}[0,1]\ar@{->}[1,0]&1\\
	1\ar@{->}[0,1]&\idunit{M}\ar@{->}[0,1]\ar@{->}[1,0]&\idele{M}\ar@{->}[0,1]\ar@{->}[1,0]&\ideal{M}\ar@{->}[0,1]\ar@{->}[1,0]&1\\
	1\ar@{->}[0,1]&\q{M}\ar@{->}[0,1]\ar@{->}[1,0]&\idclg{M}\ar@{->}[0,1]\ar@{->}[1,0]&\clg{M}\ar@{->}[0,1]\ar@{->}[1,0]&1\\
	&1&1&1
	}\end{aligned}\end{equation}
Here we use the following notation for a number field $M$: $\rint{M}$ is the ring of integers, $\ideal{M}$ is the group of fractional ideals, $\princ{M}$ is the group of principal ideals, $\idele{M}$ is the id\`eles group, $\idclg{M}$ is the id\`eles class group, $\idunit{M}$ is the group of id\`eles of valuation $0$ at every finite place and $\q{M}$ is defined by the exactness of the diagram. We refer the reader to \cite{Ta} for the class field theory results that we use. 

\begin{proposition}\label{prop:generale} 
Let $H\subseteq D$ be any subgroup and $j$ be an odd integer. Then
\[
\HH{j}{H}{\ideal{L}}=\HH{j}{H}{\idclg{L}}=\HH{j}{H}{L^\times}=\HH{j}{H}{\idele{L}}=0.
\]
\end{proposition}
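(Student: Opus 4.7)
My plan is to reduce each of the four vanishings to the degree-$1$ statements of (global and local) class field theory by invoking Lemma \ref{lemma:Sylowtrivialcohom}. First I would note that every subgroup $H\subseteq D$ has cyclic Sylow subgroups: since $\gorder{D}=2q$ with $q$ odd, every $2$-Sylow of $H$ is contained in a conjugate of $\Sigma$ (hence has order at most $2$), while every odd-prime Sylow of $H$ is contained in the cyclic group $G$. Because the Tate cohomology of a cyclic group is periodic of period $2$, it therefore suffices to check that $\HH{1}{C}{B}=0$ for every cyclic $C\subseteq D$ and for each of the four modules $B\in\{L^\times,\idele{L},\idclg{L},\ideal{L}\}$; Lemma \ref{lemma:Sylowtrivialcohom} applied with $j=1$ then upgrades this to the vanishing in every odd degree and for every subgroup $H$.

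Fix a cyclic subgroup $C\subseteq D$, set $M\df L^C$, and for each place $u$ of $M$ choose a prime $w$ of $L$ above $u$, with decomposition subgroup $D_w\subseteq C$. For $B=L^\times$, $\HH{1}{C}{L^\times}=0$ is Hilbert's Theorem~90 applied to the cyclic extension $L/M$. For $B=\idele{L}$, the $C$-module structure decomposes along the places of $M$, and each local factor $\prod_{w'\mid u}L_{w'}^\times$ is induced from $D_w$; Shapiro's lemma combined with local Hilbert~90 then gives the vanishing place-by-place, and the restricted-product structure causes no difficulty since cohomology commutes with the direct limits in play. For $B=\idclg{L}$, $\HH{1}{C}{\idclg{L}}=0$ is precisely the first axiom of a class formation for $L/M$, \cfr \cite{Ta}. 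For $B=\ideal{L}$, the $C$-module $\ideal{L}$ is a permutation module: $\ideal{L}\cong\bigoplus_u\Z[C/D_w]$, and Shapiro's lemma gives
\[
\HH{1}{C}{\Z[C/D_w]}\cong \HH{1}{D_w}{\Z}=\mathrm{Hom}(D_w,\Z)=0,
\]
so summing yields $\HH{1}{C}{\ideal{L}}=0$.

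Once Lemma \ref{lemma:Sylowtrivialcohom} and the period-$2$ periodicity are in place, each of the four pieces reduces to a textbook Hilbert~90 type statement, so I do not expect a genuine obstacle. The only subtle structural point is the Sylow-reduction step itself, which crucially depends on the fact that $\gorder{D}=2q$ with $q$ odd forces cyclic Sylow subgroups in every $H\subseteq D$; this is what makes the general odd-degree statement follow from the single degree-$1$ vanishing in each case.
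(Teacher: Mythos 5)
Your proposal is correct and follows essentially the same route as the paper: reduce to cyclic subgroups in degree $1$ via Lemma \ref{lemma:Sylowtrivialcohom}, then invoke Hilbert~90 for $L^\times$ and $\idele{L}$, the class formation axiom from \cite{Ta} for $\idclg{L}$, and the permutation-module/Shapiro argument with $\mathrm{Hom}(H(\mathfrak{L}),\Z)=0$ for $\ideal{L}$. The only difference is that you spell out the Shapiro decomposition for the id\`eles explicitly, which the paper subsumes under Hilbert's Theorem~90.
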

\begin{proof}
By Lemma \ref{lemma:Sylowtrivialcohom}, we only need to prove that $\HHN{1}{H}{B}=0$ where $B$ is any of the modules $\ideal{L}, \idclg{L}, L^\times, \idele{L}$ and $H$ is cyclic. This is Hilbert's Theorem 90 when $B=L^\times, \idele{L}$ and follows from \cite[Theorem~9.1]{Ta} when $B=\idclg{L}$. For  $B=\ideal{L}$, let $M\subseteq L$ be the subfield of $L$ fixed by $H$. There is an $H$-equivariant isomorphism
	\begin{equation}\label{eq:ideali_galois}
	\ideal{L} \cong \bigoplus_{\mathfrak{l}\subseteq \rint{M}}\Z[H/H(\mathfrak{L})]
	\end{equation}
where $\mathfrak{l}$ runs over the primes of $M$ and $H(\mathfrak{L})$ denotes the decomposition group in $L/M$ of a fixed prime $\mathfrak{L}$ of $L$ above $\mathfrak{l}$. Hence, by Shapiro's lemma,
	\[
	\HHN{1}{H}{\ideal{L}} \cong \bigoplus_{\mathfrak{l}\subseteq \rint{M}} \HHN{1}{H(\mathfrak{L})}{\Z}. 
	\]
The statement follows, since $\HHN{1}{H(\mathfrak{L})}{\Z}=\mathrm{Hom}(H(\mathfrak{L}),\Z)=0$.
\end{proof}

We can now move towards the proof of our main result, contained in Theorem \ref{yap}. 

	\begin{proposition}\label{nucciocohom} 
We have isomorphisms
	\[
	\HH*{0}{D}{\pd{\clg{L}}}\cong \HHN*{1}{D}{\pd{\q{L}}}\qquad \text{and}
	\qquad\HH*{-1}{D}{\pd{\clg{L}}}\cong \HH*{0}{D}{\pd{\q{L}}}.
	\]
	\end{proposition}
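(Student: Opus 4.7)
The plan is to derive both isomorphisms simultaneously from the long exact Tate cohomology sequence attached to the short exact sequence of $D$-modules
\[
1 \to \q{L} \to \idclg{L} \to \clg{L} \to 1
\]
extracted from the third column of diagram \eqref{diag:magic}. Since $\pd{\Z}$ is $\Z$-flat, tensoring preserves exactness, and we obtain a piece of long exact sequence of the form
\[
\HH*{-1}{D}{\pd{\idclg{L}}} \to \HH*{-1}{D}{\pd{\clg{L}}} \to \HH*{0}{D}{\pd{\q{L}}} \to \HH*{0}{D}{\pd{\idclg{L}}} \to \HH*{0}{D}{\pd{\clg{L}}} \to \HHN*{1}{D}{\pd{\q{L}}} \to \HHN*{1}{D}{\pd{\idclg{L}}}.
\]

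The crucial step is to show that the three cohomology groups involving $\idclg{L}$ vanish. For the odd-degree groups $\HH*{-1}{D}{\pd{\idclg{L}}}$ and $\HHN*{1}{D}{\pd{\idclg{L}}}$, this is immediate from Proposition~\ref{prop:generale}, which yields vanishing already before tensoring with $\pd{\Z}$. For the degree-zero group, global class field theory (the fundamental reciprocity isomorphism, \cfr \cite{Ta}) provides a canonical isomorphism $\HH{0}{D}{\idclg{L}}\cong D^{\mathrm{ab}}$. Since $D$ is dihedral of order $2q$ with $q$ odd, the commutator subgroup of $D$ is $G$, so $D^{\mathrm{ab}}\cong \Z/2\Z$; tensoring with $\pd{\Z}$ then kills this contribution.

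Once these three vanishings are established, the long exact sequence collapses into two short exact sequences
\[
0\longrightarrow \HH*{-1}{D}{\pd{\clg{L}}} \longrightarrow \HH*{0}{D}{\pd{\q{L}}} \longrightarrow 0,\qquad 0\longrightarrow \HH*{0}{D}{\pd{\clg{L}}} \longrightarrow \HHN*{1}{D}{\pd{\q{L}}}\longrightarrow 0,
\]
which are precisely the claimed isomorphisms.

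The only step requiring an external, non-formal input is the identification $\HH{0}{D}{\idclg{L}}\cong D^{\mathrm{ab}}$ from global class field theory; this is the main obstacle in the sense that it is the only place where one must appeal to a deep arithmetic result rather than purely cohomological manipulations. Everything else reduces to invoking Proposition~\ref{prop:generale} and the mechanical long exact sequence.
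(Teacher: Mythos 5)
Your proof is correct and follows essentially the same route as the paper: both arguments take the long exact Tate cohomology sequence of the sequence $1\to\q{L}\to\idclg{L}\to\clg{L}\to 1$ (which is the bottom \emph{row}, not the third column, of Diagram \ref{diag:magic}), kill the odd-degree terms involving $\idclg{L}$ via Proposition~\ref{prop:generale}, and kill the degree-zero term using the reciprocity isomorphism $\HH{0}{D}{\idclg{L}}\cong D^{\mathrm{ab}}\cong\Z/2\Z$ together with tensoring by $\pd{\Z}$. The only cosmetic difference is that the paper writes the five-term exact sequence before tensoring and then observes that the middle term dies, whereas you tensor first and invoke flatness; the mathematical content is identical.
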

	\begin{proof} 
From the long exact $D$-Tate cohomology sequence of the bottom row of Diagram \ref{diag:magic}, we get an exact sequence
\begin{equation}\label{number}
\HH{-1}{D}{\clg{L}}\longrightarrow\HH{0}{D}{\q{L}}\longrightarrow\HH{0}{D}{\idclg{L}}\longrightarrow\HH{0}{D}{\clg{L}}\longrightarrow\HHN{1}{D}{\q{L}}.
\end{equation}
The rightmost map is surjective and the leftmost map is injective thanks to Proposition \ref{prop:generale}. Tensoring the sequence with $\pd{\Z}$, the middle term of \eqref{number} becomes trivial, since class field theory gives an isomorphism
\begin{equation}\label{eq:H0TDidclg}
\HH{0}{D}{\idclg{L}}\stackrel{\cong}{\longrightarrow} D^{ab}\cong \Z/2\Z.\qedhere
\end{equation}
\end{proof}

The following proposition is the dihedral analogue of Chevalley's Ambiguous Class Number Formula (which holds for cyclic groups). Our proof is inspired by the cohomological proof of Chevalley's formula (see \cite[Lemma 4.1]{LaCF}). 
	
\begin{proposition}\label{servesotto} 
Let $H\subseteq D$ be a subgroup and let $M$ be the subfield of $L$ fixed by $H$. There is a long exact sequence
	\[
	0\longrightarrow \kernel{\arext{L}{M}}\longrightarrow \HHN{1}{H}{\unit{L}} \longrightarrow \HHN{1}{H}{\idunit{L}} \longrightarrow \HHN{0}{H}{\clg{L}}/\arext{L}{M}(\clg{M})\longrightarrow \HHN{1}{H}{\princ{L}} \longrightarrow 0.
	\]
In particular
\[
\frac{\hhN*{1}{H}{\pd{\idunit{L}}} \hhN*{1}{H}{\pd{\princ{L}}}}{\hhN*{0}{H}{\clgp{L}}}=\frac{\hhN*{1}{H}{\pd{\unit{L}}}}{\gorder*{\clgp{M}}}.
\]
\end{proposition}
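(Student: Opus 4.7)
The plan is to derive the five-term sequence by applying the snake lemma to a morphism of two short exact sequences coming from $H$-cohomology of rows of Diagram \ref{diag:magic}, then splicing with the $H$-cohomology of the right column.

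First, I would take $H$-invariants of the top row $1 \to \unit{L} \to L^\times \to \princ{L} \to 1$ and of the middle row $1 \to \idunit{L} \to \idele{L} \to \ideal{L} \to 1$. Proposition \ref{prop:generale} kills $\HHN{1}{H}{L^\times}$ and $\HHN{1}{H}{\idele{L}}$. Using the canonical identifications $M^\times/\unit{M} \cong \princ{M}$ and $\idele{M}/\idunit{M} \cong \ideal{M}$ (the latter because the content map $\idele{M} \to \ideal{M}$ is surjective with kernel $\idunit{M}$), this yields two short exact sequences
\[
0 \to \princ{M} \to \princ{L}^H \to \HHN{1}{H}{\unit{L}} \to 0, \qquad 0 \to \ideal{M} \to \ideal{L}^H \to \HHN{1}{H}{\idunit{L}} \to 0,
\]
where the left maps are the natural extensions $(\alpha)\mapsto \alpha\rint{L}$ and $\mathfrak{a}\mapsto \mathfrak{a}\rint{L}$.

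These two short exact sequences assemble into a morphism of short exact sequences whose left and middle vertical arrows are the tautological inclusions $\princ{M}\hookrightarrow \ideal{M}$ and $\princ{L}^H\hookrightarrow \ideal{L}^H$, the third vertical arrow being the map $\HHN{1}{H}{\unit{L}} \to \HHN{1}{H}{\idunit{L}}$ that we wish to study. The cokernels of the first two are $\clg{M}$ and $\mathcal{I} := \ideal{L}^H/\princ{L}^H$ respectively, and the induced map $\clg{M}\to \mathcal{I}$ sends $[\mathfrak{a}]\mapsto [\mathfrak{a}\rint{L}]$; composed with the inclusion $\mathcal{I}\hookrightarrow \clg{L}^H$ this is exactly $\arext{L}{M}$. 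Since the leftmost two vertical arrows are injective, the snake lemma collapses to a four-term exact sequence
\[
0 \to \kernel{\arext{L}{M}} \to \HHN{1}{H}{\unit{L}} \to \HHN{1}{H}{\idunit{L}} \to \mathcal{I}/\arext{L}{M}(\clg{M}) \to 0.
\]

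Finally, the $H$-cohomology of the right column $1 \to \princ{L} \to \ideal{L} \to \clg{L} \to 1$, using once more $\HHN{1}{H}{\ideal{L}}=0$ from Proposition \ref{prop:generale}, produces $0 \to \mathcal{I} \to \clg{L}^H \to \HHN{1}{H}{\princ{L}} \to 0$. Quotienting by $\arext{L}{M}(\clg{M})\subseteq \mathcal{I}$ and splicing with the four-term sequence above produces the claimed five-term exact sequence. The numerical identity then follows by tensoring the sequence with $\pd{\Z}$ (which preserves exactness by flatness of $\pd{\Z}$), taking the alternating product of orders, and simplifying using $|\clgp{M}| = |\kernel{\pd{\arext{L}{M}}}|\cdot|\pd{\arext{L}{M}}(\clgp{M})|$ to cancel the factor $|\pd{\arext{L}{M}}(\clgp{M})|$ hidden in $|\clgp{L}^H/\pd{\arext{L}{M}}(\clgp{M})|$. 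The main technical point requiring care is the identification of the snake-lemma connecting map $\clg{M}\to \mathcal{I}$ with $\arext{L}{M}$; everything else is a formal manipulation of long exact sequences once Hilbert~90 has been invoked.
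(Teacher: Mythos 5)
Your proof is correct and follows essentially the same route as the paper: both rest on Proposition \ref{prop:generale} (Hilbert 90), on the two identifications $\HHN{0}{H}{\princ{L}}/\princ{M}\cong\HHN{1}{H}{\unit{L}}$ and $\HHN{0}{H}{\ideal{L}}/\ideal{M}\cong\HHN{1}{H}{\idunit{L}}$ extracted from the top and middle rows of Diagram \ref{diag:magic}, and on a snake-lemma computation whose connecting map is identified with $\arext{L}{M}$. The only difference is bookkeeping: the paper applies the snake lemma directly to the comparison of the ideal-class sequence of $M$ with the $H$-invariants of that of $L$, whereas you apply it transposed, to the morphism between the two derived short exact sequences, and then splice in the right-hand column; the two computations are interchangeable and yield the same five-term sequence and the same order count.
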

\begin{proof}
From the rightmost column of Diagram \ref{diag:magic} we get a commutative diagram with exact rows
\[
\xymatrix{
0\ar@{->}[0,1]& \princ{M}\ar@{->}[0,1]\ar@{}[1,0]^/-1.1pc/{}="a"\ar@{}[1,0]^/1.25pc/{}="b"& \ideal{M}\ar@{->}[0,1]\ar@{}[1,0]^/-1.1pc/{}="c"\ar@{}[1,0]^/1.25pc/{}="d"& \clg{M} \ar@{->}[1,0]^{\arext{L}{M}} \ar@{->}[0,1]& 0\\
0\ar@{->}[0,1]& \HHN{0}{H}{\princ{L}}\ar@{->}[0,1]& \HHN{0}{H}{\ideal{L}}\ar@{->}[0,1]& \HHN{0}{H}{\clg{L}}\ar@{->}[0,1]& \HHN{1}{H}{\princ{L}}\ar[r]& 0.
\ar@{^(->}"a";"b"\ar@{^(->}"c";"d"}
\]
The exactness of the bottom row follows from Proposition \ref{prop:generale}. Applying the snake lemma we get the exact sequence
\begin{align}\label{illuminazione}
0\longrightarrow \kernel{\arext{L}{M}}\longrightarrow \HHN{0}{H}{\princ{L}}/\princ{M}&\longrightarrow \HHN{0}{H}{\ideal{L}}/\ideal{M}\longrightarrow\nonumber\\
&\longrightarrow \HHN{0}{H}{\clg{L}}/\arext{L}{M}(\clg{M})\longrightarrow \HHN{1}{H}{\princ{L}}\longrightarrow 0.
\end{align}
Now, from the upper row of Diagram \ref{diag:magic}, we get the exact sequence
\[
0\longrightarrow M^\times/\unit{M}= \princ{M} \longrightarrow \HHN{0}{H}{\princ{L}} \longrightarrow \HHN{1}{H}{\unit{L}} \longrightarrow 0.
\] 
We thus obtain an isomorphism
\begin{equation}\label{kernel}
\HHN{0}{H}{\princ{L}}/\princ{M} \cong \HHN{1}{H}{\unit{L}}.
\end{equation}
As for the middle term of \eqref{illuminazione}, consider the central row of Diagram \ref{diag:magic}. Using that $\HHN{1}{H}{\idele{L}}=0$ by Proposition \ref{prop:generale}, we get an exact sequence
\[
0\longrightarrow M^\times/\idunit{M}=\ideal{M}\longrightarrow \HHN{0}{H}{\ideal{L}}\longrightarrow \HHN{1}{H}{\idunit{L}}\longrightarrow 0.
\]
This gives an isomorphism
\begin{equation}\label{cokernel}
\HHN{0}{H}{\ideal{L}}/\ideal{M}\cong \HHN{1}{H}{\idunit{L}}.
\end{equation}
Plugging \eqref{kernel} and \eqref{cokernel} in the exact sequence \eqref{illuminazione}, we obtain the exact sequence of the statement. The formula for the orders follows since
\[
\gorder{\kernel{\arext{L}{M}}}\cdot\gorder{\arext{L}{M}(\clg{M})} =\gorder{\clg{M}}.\qedhere
\]
\end{proof}
\begin{remark}
Observe that the norm $\arnm{L}{k}\colon\pd{\idclg{L}}\to \pd{\idclg{k}}$ is surjective (for this remark only, we extend the notation $\arnm{L}{k}$ also to denote the corresponding morphism on id\`ele class groups and similarly for $\arext{L}{k}$). Indeed, Hilbert 90 implies that $\idclg{L}^{D}=\arext{L}{k}(\idclg{k})$ and in particular $\arext{L}{k}\colon\idclg{k}\to\idclg{L}$ is injective. Since $N_D \idclg{L}=\arext{L}{k}(\arnm{L}{k}(\idclg{L}))$, we deduce that
\[
\HH{0}{D}{\idclg{L}}= \idclg{L}^D/ N_D \idclg{L}= \arext{L}{k}(\idclg{k})/ \arext{L}{k}(\arnm{L}{k}(\idclg{L})) \cong \idclg{k}/\arnm{L}{k}(\idclg{L}).
\]
In particular, since $\HH{0}{D}{\pd{\idclg{L}}}=0$ by class field theory (see \eqref{eq:H0TDidclg}), we find that $\pd{\idclg{k}}=\arnm{L}{k}(\pd{\idclg{L}})$. Looking at the commutative diagram with exact rows
\[\xymatrix{
\idclg{L}\ar@{->}[0,1]\ar@{->}[1,0]^{\arnm{L}{k}}&\clg{L}\ar@{->}[0,1]\ar@{->}[1,0]^{\arnm{L}{k}}&0\\
\idclg{k}\ar@{->}[0,1]&\clg{k}\ar@{->}[0,1]&0
}\] 
we deduce that $\arnm{L}{k}\colon\pd{\clg{L}}\to \pd{\clg{k}}$ must be surjective too. Moreover, since $N_D \clg{L} = \arext{L}{k}(\arnm{L}{k}(\clg{L}))$, we find that 
\begin{equation}\label{eq:ND=ik}
N_D\pd{\clg{L}} = \arext{L}{k}\bigl(\pd{\clg{k}}\bigr)
\end{equation}
and in particular
\[
\HH*{0}{D}{\pd{\clg{L}}}= \HHN*{0}{D}{\pd{\clg{L}}}/\arext{L}{k}\bigl(\pd{\clg{k}}\bigr).
\]
\end{remark}

\begin{proposition}\label{prop:gessisullalavagna}
Let $H\subseteq D$ be any subgroup and let $M$ be the subfield of $L$ fixed by $H$. There is an exact sequence
\[
0\longrightarrow \HH{-1}{H}{\princ{L}}\longrightarrow \HH{0}{H}{\unit{L}}\longrightarrow \HH{0}{H}{\idunit{L}}\longrightarrow\HH{0}{H}{\q{L}}\longrightarrow \kernel{\arext{L}{M}}\longrightarrow 0.
\] 
In particular
\[
\frac{\hh*{0}{D}{\pd{\idunit{L}}}\hh*{-1}{D}{\pd{\princ{L}}}}{\hh*{-1}{D}{\clgp{L}}\cdot \gorder*{N_D \clgp{L}}}=\frac{\hh*{0}{D}{\pd{\unit{L}}}}{\gorder{\clgp{k}}}.
\]
\end{proposition}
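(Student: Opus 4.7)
The plan is to apply the long exact Tate cohomology sequence to the leftmost column $1\to \unit{L}\to \idunit{L}\to \q{L}\to 1$ of Diagram~\ref{diag:magic} and to identify its two extremes using the other rows together with Propositions~\ref{prop:generale} and~\ref{servesotto}. That sequence yields
\[
\HH{-1}{H}{\q{L}}\to \HH{0}{H}{\unit{L}}\to \HH{0}{H}{\idunit{L}}\to \HH{0}{H}{\q{L}}\to \HH{1}{H}{\unit{L}}\to \HH{1}{H}{\idunit{L}},
\]
and Proposition~\ref{servesotto} identifies the kernel of the rightmost map with $\kernel{\arext{L}{M}}$. The cokernel of $\HH{0}{H}{\idunit{L}}\to \HH{0}{H}{\q{L}}$ therefore equals $\kernel{\arext{L}{M}}$, which yields the right-hand portion of the target sequence.

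For the left-hand portion, the Tate LES of the top row $1\to \unit{L}\to L^\times\to \princ{L}\to 1$ combined with the vanishing $\HH{-1}{H}{L^\times}=0$ from Proposition~\ref{prop:generale} produces an injection $\delta\colon \HH{-1}{H}{\princ{L}}\hookrightarrow \HH{0}{H}{\unit{L}}$. I must then show that the image of $\delta$ coincides with the kernel of the map $f\colon\HH{0}{H}{\unit{L}}\to \HH{0}{H}{\idunit{L}}$. The inclusions $\unit{L}\subseteq \idunit{L}$, $L^\times\subseteq \idele{L}$, $\princ{L}\hookrightarrow \ideal{L}$ give a morphism from the top row to the center row $1\to \idunit{L}\to \idele{L}\to \ideal{L}\to 1$ of Diagram~\ref{diag:magic}; the induced commutative ladder on Tate cohomology together with $\HH{-1}{H}{\ideal{L}}=0$ from Proposition~\ref{prop:generale} immediately gives $f\circ \delta=0$. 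For the reverse containment I pick $x\in \HH{0}{H}{\unit{L}}$ with $f(x)=0$: commutativity forces the image of $x$ in $\HH{0}{H}{\idele{L}}$ to vanish, so its image in $\HH{0}{H}{L^\times}$ lies in the kernel of $\HH{0}{H}{L^\times}\to \HH{0}{H}{\idele{L}}$. The key observation, which I expect to be the main obstacle, is that this last kernel vanishes: this follows from $\HH{-1}{H}{\idclg{L}}=0$ (Proposition~\ref{prop:generale}) used in the Tate LES of the center column $1\to L^\times\to \idele{L}\to \idclg{L}\to 1$. Once this is verified, exactness of the top-row LES places $x$ in the image of $\delta$, completing the five-term exact sequence.

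For the numerical identity, I tensor the resulting exact sequence with $\pd{\Z}$ and equate the alternating product of orders to one, obtaining
\[
\hh*{-1}{D}{\pd{\princ{L}}}\cdot \hh*{0}{D}{\pd{\idunit{L}}}\cdot \gorder*{\pd{\kernel{\arext{L}{k}}}}=\hh*{0}{D}{\pd{\unit{L}}}\cdot \hh*{0}{D}{\pd{\q{L}}}.
\]
Proposition~\ref{nucciocohom} lets me replace $\hh*{0}{D}{\pd{\q{L}}}$ by $\hh*{-1}{D}{\clgp{L}}$, while equation~\eqref{eq:ND=ik} yields $\gorder*{N_D\clgp{L}}\cdot \gorder*{\pd{\kernel{\arext{L}{k}}}}=\gorder*{\clgp{k}}$. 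Substituting these two identities and rearranging produces the claimed formula.
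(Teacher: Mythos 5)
Your proposal is correct and follows essentially the same route as the paper: both take the Tate long exact sequence of the column $1\to\unit{L}\to\idunit{L}\to\q{L}\to 1$, identify the right-hand end via Proposition~\ref{servesotto}, identify the kernel of $\HH{0}{H}{\unit{L}}\to\HH{0}{H}{\idunit{L}}$ with $\HH{-1}{H}{\princ{L}}$ using the vanishing results of Proposition~\ref{prop:generale} in the same commutative square, and deduce the numerical identity from Proposition~\ref{nucciocohom} together with \eqref{eq:ND=ik}. The only difference is presentational: the paper proves the two kernels $\kernel\bigl(\HH{0}{H}{\unit{L}}\to\HH{0}{H}{\idunit{L}}\bigr)$ and $\kernel\bigl(\HH{0}{H}{\unit{L}}\to\HH{0}{H}{L^\times}\bigr)$ equal in one step via the injectivity of the two maps into $\HH{0}{H}{\idele{L}}$, whereas you establish the two containments separately with the same ingredients.
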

\begin{proof}
From the long exact sequence of Tate cohomology of the left-hand column of Diagram \ref{diag:magic}, we get an exact sequence
\[
\HH{0}{H}{\unit{L}}\longrightarrow\HH{0}{H}{\idunit{L}}\longrightarrow\HH{0}{H}{\q{L}}\longrightarrow \HHN{1}{H}{\unit{L}}\longrightarrow \HHN{1}{H}{\idunit{L}}.
\]
By Proposition \ref{servesotto} we know that 
\[
\kernel\bigl(\HHN{1}{H}{\unit{L}}\to \HHN{1}{H}{\idunit{L}}\bigr) = \kernel\bigl(\arext{L}{M}\bigr).
\]
Now observe that
\begin{equation}\label{eq:ker=ker}
\kernel\bigl(\HH{0}{H}{\unit{L}}\to \HH{0}{H}{\idunit{L}}\bigr)=\kernel\bigl(\HH{0}{H}{\unit{L}}\to \HH{0}{H}{L^\times}\bigr).
\end{equation}
This follows from the commutative diagram
\[
\xymatrix{
\HH{0}{H}{\unit{L}}\ar[r]\ar[1,0] &\HH{0}{H}{L^\times}\ar@{}[1,0]^/-1.25pc/{}="a"\ar@{}[1,0]^/1.15pc/{}="b"\\
\HH{0}{H}{\idunit{L}} \ar@{^{(}->}[0,1] &\HH{0}{H}{\idele{L}}
\ar@{^(->}"a";"b"
}
\]
where the injectivity of bottom horizontal and right vertical arrows follow from Proposition \ref{prop:generale}. The exact sequence of the statement follows since
\[
\kernel\bigl(\HH{0}{H}{\unit{L}}\to \HH{0}{H}{L^\times}\bigr) = \HH{-1}{H}{\princ{L}}
\]
by Proposition \ref{prop:generale}. The last assertion follows by Proposition \ref{nucciocohom} and \eqref{eq:ND=ik}.	\end{proof}

\begin{remark}
Concerning the cokernel of $\arext{L}{M}$, similar techniques as in the above proof allow to give a cohomological interpretation. Taking $H$-cohomology of the bottom row of diagram \eqref{diag:magic} and recalling that $\HHN{1}{H}{\idclg{L}}=0$, we get a commutative diagram
\begin{equation*}
\xymatrix{
0\ar@{->}[0,1]&\q{M}\ar@{->}[1,0]\ar@{->}[0,1]&\idclg{M}\ar@{=}[1,0]\ar@{->}[0,1]&\clg{M}\ar@{->}[1,0]^{\arext{L}{M}}\ar@{->}[0,1]&0\\
0\ar@{->}[0,1]&\q{L}^H\ar@{->}[0,1]&\idclg{L}^H\ar@{->}[0,1]&\clg{L}^H\ar@{->}[0,1]&\HHN{1}{H}{\q{L}}\ar@{->}[0,1]&0
}\end{equation*}
Applying the snake lemma to the above diagram, we obtain an isomorphism
\begin{equation}\label{eq:coker_qcohom}
\cokernel{\arext{L}{M}}\cong \HHN{1}{H}{\q{L}}
\end{equation}
which will be useful when considering the $2$-part of our main result.
\end{remark}

We now give an explicit description of the $D$-cohomology of $\idunit{L}$, using local class field theory. We need the following notation. 
\begin{notation}
If $M$ is a subfield of $L$, we denote by $T(L/M)$ the set of primes in $M$ which ramify in $L/M$. We also write $T(L/k)=T_s(L/k)\coprod T_r(L/k)\coprod T_i(L/k)$ where the sets $T_s(L/k), T_r(L/k)$ and $T_i(L/k)$ contain the primes in $T(L/k)$ which, respectively, split, ramify and remain inert in the subextension $F/k$. For each prime $\mathfrak{l}$ of $k$ we denote by $e_\mathfrak{l}$ and $f_\mathfrak{l}$ its ramification index and inertia degree, respectively, in $L/k$. For a prime $\mathfrak{L}$ of $L$, we denote by $D(\mathfrak{L})$ the decomposition group of $\mathfrak{L}$ in $L/k$. Given a prime $\mathfrak{r}$ of $M\subseteq L$, we use the notation $\unit{M,\mathfrak{r}}$ instead of $\unit{M_{\mathfrak{r}}}$ to denote the units of the local field $M_\mathfrak{r}$, in order to lighten typography.
\end{notation}
	
We start with a lemma describing the cohomology of local units. 
\begin{lemma}\label{lemma:f_in_F}
Let $\mathfrak{l}\in T(L/k)$ be a prime and let $\mathfrak{L}\subseteq\rint{L}$ be a prime in $L$ above $\mathfrak{l}$. 
\begin{enumerate}[label=\alph*)]
\item If $\mathfrak{l}\in T_i(L/k)$, then $f_\mathfrak{l}=2$ and $\HH{0}{D(\mathfrak{L})}{\unit{L,\mathfrak{L}}}=0$;
\item If $\mathfrak{l}\in T_r(L/k)$, then $f_\mathfrak{l}=1$ and $\HH{0}{D(\mathfrak{L})}{\unit{L,\mathfrak{L}}}\cong \Z/2$;
\item If $\mathfrak{l}\in T_s(L/k)$, then $\HH{0}{D(\mathfrak{L})}{\unit{L,\mathfrak{L}}}\cong \Z/e_\mathfrak{l}$.
\end{enumerate}
Moreover, for any $\mathfrak{l}\in T(L/k)$, we have $\HHN{1}{D(\mathfrak{L})}{\unit{L,\mathfrak{L}}}\cong \Z/e_\mathfrak{l}$.
\end{lemma}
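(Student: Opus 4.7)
The plan is to derive all four claims simultaneously from the short exact sequence of $D(\mathfrak{L})$-modules
\[
1 \longrightarrow \unit{L,\mathfrak{L}} \longrightarrow L_\mathfrak{L}^\times \xrightarrow{\ v_L\ } \Z \longrightarrow 0
\]
given by the normalized valuation (with trivial action on $\Z$). Taking Tate cohomology and combining Hilbert~90, $\HHN{1}{D(\mathfrak{L})}{L_\mathfrak{L}^\times}=0$, with local class field theory, $\HH{0}{D(\mathfrak{L})}{L_\mathfrak{L}^\times}\cong D(\mathfrak{L})^{ab}$, together with $\HH{-1}{D(\mathfrak{L})}{\Z}=0$ and $\HH{0}{D(\mathfrak{L})}{\Z}=\Z/\gorder{D(\mathfrak{L})}\Z$, I obtain the four-term exact sequence
\[
0 \longrightarrow \HH{0}{D(\mathfrak{L})}{\unit{L,\mathfrak{L}}} \longrightarrow D(\mathfrak{L})^{ab} \xrightarrow{\ \overline{v_L}\ } \Z/\gorder{D(\mathfrak{L})}\Z \longrightarrow \HHN{1}{D(\mathfrak{L})}{\unit{L,\mathfrak{L}}} \longrightarrow 0.
\]
The image of $\overline{v_L}$ is $v_L(k_\mathfrak{l}^\times)=e_\mathfrak{l}\Z$ reduced modulo $\gorder{D(\mathfrak{L})}\Z$, a cyclic subgroup of order $f_\mathfrak{l}$. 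This already yields $\HHN{1}{D(\mathfrak{L})}{\unit{L,\mathfrak{L}}}\cong\Z/e_\mathfrak{l}\Z$ in all three cases and reduces the task to identifying $D(\mathfrak{L})^{ab}$ and the kernel of $\overline{v_L}$.

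For this I determine $D(\mathfrak{L})$ as a subgroup of $D$ in each scenario, combining the general fact that $D(\mathfrak{L})/I(\mathfrak{L})$ is cyclic of order $f_\mathfrak{l}$ with the observation that the image of $I(\mathfrak{L})$ in $\Delta$ is the inertia of $F/k$ at $\mathfrak{l}$. In \emph{case~(a)} inertness forces $I(\mathfrak{L})\subseteq G$, so $I(\mathfrak{L})$ is cyclic of odd order $e_\mathfrak{l}$; the cyclic quotient $D(\mathfrak{L})/I(\mathfrak{L})$ has even order since it surjects onto $\Delta$, but any lift of a generator is either a rotation (with odd-order image, coming from a subgroup of $G$) or a reflection (squaring to $1$ in $D$, hence of image order at most $2$). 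These constraints force $f_\mathfrak{l}=2$ and show $D(\mathfrak{L})$ to be dihedral of order $2e_\mathfrak{l}$. In \emph{case~(c)} splitness gives $D(\mathfrak{L})\subseteq G$, so $D(\mathfrak{L})$ is cyclic of order $e_\mathfrak{l} f_\mathfrak{l}$.

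The most delicate point is \emph{case~(b)}, where ramification in $F/k$ forces $I(\mathfrak{L})$ itself to contain a reflection. I intend to show by a direct conjugation computation in $D$, using $\sigma\rho=\rho^{-1}\sigma$ and the oddness of $q$, that every subgroup of $D$ containing a reflection is self-normalizing: concretely, if $H\subseteq D$ contains some $\sigma\rho^i$, then a rotation $\rho^j$ normalizes $H$ only if $\rho^j$ lies in the rotation subgroup of $H$, and a reflection $\sigma\rho^m$ normalizes $H$ only if it is itself a reflection of $H$. Since $I(\mathfrak{L})$ is normal in $D(\mathfrak{L})$, this self-normalizing property forces $D(\mathfrak{L})=I(\mathfrak{L})$, whence $f_\mathfrak{l}=1$ and $D(\mathfrak{L})$ is dihedral of order $e_\mathfrak{l}$. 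This group-theoretic step is the main obstacle of the proof, but it reduces to an explicit verification once one uses the presentation of $D$.

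With the structure of $D(\mathfrak{L})$ pinned down in every case, the order of $\HH{0}{D(\mathfrak{L})}{\unit{L,\mathfrak{L}}}$ equals $\gorder{D(\mathfrak{L})^{ab}}/f_\mathfrak{l}$. In cases (a) and (b), $D(\mathfrak{L})$ is dihedral with rotation subgroup of odd order, so $D(\mathfrak{L})^{ab}\cong\Z/2$; the kernel is therefore trivial when $f_\mathfrak{l}=2$ (case a) and $\Z/2$ when $f_\mathfrak{l}=1$ (case b). In case~(c), $D(\mathfrak{L})^{ab}=D(\mathfrak{L})$ is cyclic of order $e_\mathfrak{l} f_\mathfrak{l}$, so the kernel of a surjection onto a cyclic subgroup of order $f_\mathfrak{l}$ is automatically cyclic of order $e_\mathfrak{l}$, as required.
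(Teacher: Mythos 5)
Your proof is correct and follows essentially the same route as the paper: both arguments rest on the valuation exact sequence $1\to\unit{L,\mathfrak{L}}\to L_\mathfrak{L}^\times\to\Z\to 0$, Hilbert~90, and the local reciprocity isomorphism $\HH{0}{D(\mathfrak{L})}{L_\mathfrak{L}^\times}\cong D(\mathfrak{L})^{\mathrm{ab}}$, reducing everything to the structure of $D(\mathfrak{L})$ and the image of the valuation (the paper packages this as two snake-lemma diagrams with norm maps rather than one long Tate cohomology sequence, which is the same computation). The only substantive divergence is case~(b), where you establish $f_\mathfrak{l}=1$ via the self-normalization of subgroups of $D$ containing a reflection — a correct but heavier argument than the paper's one-line observation that $f_\mathfrak{l}=2$ together with $2\mid e_\mathfrak{l}$ would force $4\mid\gorder{D(\mathfrak{L})}$, contradicting $\gorder{D(\mathfrak{L})}\mid 2q$ with $q$ odd.
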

\begin{proof}
Consider the following diagram with exact rows
\[\xymatrix{
0\ar@{->}[0,1]&\unit{L,\mathfrak{L}}\ar@{->}[0,1]\ar@{->}[1,0]&L_\mathfrak{L}^\times\ar@{->}[0,1]^{v_\mathfrak{L}}\ar@{->}[1,0]&\Z\ar@{->}[0,1]\ar@{}[1,0]^/-1.25pc/{}="a"\ar@{}[1,0]^/1.25pc/{}="b"&0\\
0\ar@{->}[0,1]&\unit{k,\mathfrak{l}}\ar@{->}[0,1]&k_\mathfrak{l}^\times\ar@{->}[0,1]^{v_\mathfrak{l}}&\Z\ar@{->}[0,1]&0
\ar@{^(->}"a";"b"_{\cdot f_\mathfrak{l}}
}\]
where the left and central vertical maps are the norms and $v_\mathfrak{l}$ and $v_\mathfrak{L}$ are the $\mathfrak{l}$-adic and the $\mathfrak{L}$-adic valuation, respectively. The snake lemma, together with local class field theory, gives an exact sequence
\[
0\longrightarrow \HH{0}{D(\mathfrak{L})}{\unit{L,\mathfrak{L}}}\longrightarrow D(\mathfrak{L})^{\mathrm{ab}}\longrightarrow \Z/f_\mathfrak{l}\longrightarrow 0.
\]
If $\mathfrak{l}\in T_i(L/k)\cup T_r(L/k)$, then $D(\mathfrak{L})$ has order divisible by $2$ so it must be dihedral or cyclic of order $2$. In both cases $D(\mathfrak{L})^{\mathrm{ab}}\cong \Z/2$ and therefore $f_\mathfrak{l}$ divides $2$, by the above exact sequence. When $\mathfrak{l}\in T_i(L/k)$, we must have $f_\mathfrak{l}=2$ and $\HH{0}{D(\mathfrak{L})}{\unit{L,\mathfrak{L}}}=0$. When $\mathfrak{l}\in T_r(L/k)$, we deduce $f_\mathfrak{l}=1$, because otherwise $f_\mathfrak{l}e_\mathfrak{l}=\gorder{D(\mathfrak{L})}$ would be divisible by $4$ and this is absurd: so $\HH{0}{D(\mathfrak{L})}{\unit{L,\mathfrak{L}}}\cong \Z/2$. Finally, if $\mathfrak{l}\in T_s(L/k)$, then $D(\mathfrak{L})^{\mathrm{ab}}=D(\mathfrak{L})$ is cyclic and, since $f_\mathfrak{l}e_\mathfrak{l}=\gorder{D(\mathfrak{L})}$, we deduce that $\HH{0}{D(\mathfrak{L})}{\unit{L,\mathfrak{L}}}\cong \Z/e_\mathfrak{l}$.
		
To prove the assertion on $H^1$, apply the snake lemma to the commutative diagram with exact rows 
\[\xymatrix{
0\ar@{->}[0,1]&\unit{k,\mathfrak{l}}\ar@{->}[0,1]\ar@{=}[1,0]&k_\mathfrak{l}^\times\ar@{->}[0,1]^{v_\mathfrak{l}}\ar@{=}[1,0]&\Z\ar@{}[1,0]^/-1.25pc/{}="a"\ar@{}[1,0]^/1.25pc/{}="b"\ar@{->}[0,1] &0\\
0\ar@{->}[0,1]&\bigl(\unit{L,\mathfrak{L}}\bigr)^{D(\mathfrak{L})}\ar@{->}[0,1]&\bigl(L_\mathfrak{L}^\times\bigr)^{D(\mathfrak{L})}\ar@{->}[0,1]^/+1em/{v_\mathfrak{L}}&\Z\ar@{->}[0,1]&\HHN{1}{D(\mathfrak{L})}{\unit{L,\mathfrak{L}}}\ar@{->}[0,1]&\HHN{1}{D(\mathfrak{L})}{L_\mathfrak{L}^\times}=0
\ar@{^(->}"a";"b"_{\cdot e_\mathfrak{l}}}
\] 
to deduce an isomorphism
\[
\HHN{1}{D(\mathfrak{L})}{\unit{L,\mathfrak{L}}} \cong \Z/e_\mathfrak{l}.\qedhere
\]
\end{proof}
	
\begin{proposition}\label{prop:coomuni} 
With the notation introduced before Lemma \ref{lemma:f_in_F}, there are isomorphisms
\begin{align*}
\HH{0}{D}{\idunit{L}}\;\cong&\bigoplus_{\mathfrak{l}\in T_s(L/k)}
(\Z/e_\mathfrak{l})\;\oplus
\bigoplus_{\mathfrak{l}\in T_r(L/k)} (\Z/2)
\intertext{ and }
\HHN{1}{D}{\idunit{L}}\;\cong&
\bigoplus_{\mathfrak{l}\in T_s(L/k)}(\Z/e_\mathfrak{l})\;\oplus\bigoplus_{\mathfrak{l}\in T_i(L/k)}
(\Z/e_\mathfrak{l})\;\oplus\bigoplus_{\mathfrak{l}\in T_r(L/k)}
(\Z/e_\mathfrak{l}).
\end{align*}
\end{proposition}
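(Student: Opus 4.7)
The plan is to compute these cohomology groups by reducing to a local computation through Shapiro's lemma and then invoking Lemma \ref{lemma:f_in_F}. The idèle unit group decomposes as a (restricted) product indexed by primes $\mathfrak{l}$ of $k$:
\[
\idunit{L} \cong \prod_{\mathfrak{l}} \;\Bigl(\prod_{\mathfrak{L}\mid\mathfrak{l}}\unit{L,\mathfrak{L}}\Bigr),
\]
where for each $\mathfrak{l}$ the inner factor is a $D$-module. Fixing one prime $\mathfrak{L}_0$ above each $\mathfrak{l}$, the inner factor is $D$-isomorphic to the induced module $\operatorname{Ind}_{D(\mathfrak{L}_0)}^{D}\unit{L,\mathfrak{L}_0}$, since $D$ permutes the primes above $\mathfrak{l}$ transitively with stabilizer $D(\mathfrak{L}_0)$.

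Next, I would invoke Shapiro's lemma (valid for Tate cohomology of finite groups) to obtain, for each $\mathfrak{l}$,
\[
\HH{i}{D}{\prod_{\mathfrak{L}\mid\mathfrak{l}}\unit{L,\mathfrak{L}}}\;\cong\;\HH{i}{D(\mathfrak{L}_0)}{\unit{L,\mathfrak{L}_0}}\qquad\text{for every }i\in\Z.
\]
Since Tate cohomology of a finite group commutes with products whenever all but finitely many factors vanish, the key observation is that for an unramified prime $\mathfrak{l}\notin T(L/k)$ the decomposition group $D(\mathfrak{L}_0)$ is cyclic and $\unit{L,\mathfrak{L}_0}$ is a cohomologically trivial $D(\mathfrak{L}_0)$-module (a classical consequence of the existence of a normal integral basis at unramified primes, see \cite[Ch.~XI, \S~4]{CE}). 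Hence only primes $\mathfrak{l}\in T(L/k)$ contribute and
\[
\HH{i}{D}{\idunit{L}}\;\cong\;\bigoplus_{\mathfrak{l}\in T(L/k)}\HH{i}{D(\mathfrak{L})}{\unit{L,\mathfrak{L}}}.
\]

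Finally, I would read off each summand from Lemma \ref{lemma:f_in_F}. For $i=0$, the lemma gives $0$ when $\mathfrak{l}\in T_i(L/k)$, $\Z/2$ when $\mathfrak{l}\in T_r(L/k)$, and $\Z/e_\mathfrak{l}$ when $\mathfrak{l}\in T_s(L/k)$, producing the stated formula for $\HH{0}{D}{\idunit{L}}$. For the first standard cohomology group, the same lemma yields $\Z/e_\mathfrak{l}$ regardless of the splitting behaviour in $F/k$, and summing over $\mathfrak{l}\in T(L/k)=T_s\coprod T_i\coprod T_r$ yields the claimed description of $\HHN{1}{D}{\idunit{L}}$. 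The only genuinely delicate point in the whole argument is verifying that cohomology commutes with the infinite product defining $\idunit{L}$; this reduces to the cohomological triviality of local units at unramified primes, which makes the sum effectively finite and lets Shapiro's lemma be applied term by term.
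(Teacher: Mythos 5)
Your argument is essentially identical to the paper's proof: the paper likewise invokes the semi-local decomposition and Shapiro's lemma (citing \cite[Proposition~7.2]{Ta}) together with the cohomological triviality of local units at unramified primes (citing \cite[Chapter 1, Proposition 1]{SerCF}) to reduce to the finitely many primes of $T(L/k)$, and then reads off the summands from Lemma \ref{lemma:f_in_F}. The proposal is correct and matches the paper's route step for step.
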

\begin{proof}
For every prime $\mathfrak{l}$ of $k$ we fix a prime $\mathfrak{L}\mid\mathfrak{l}$ in $L$. By combining \cite[Proposition 7.2]{Ta} with the fact that local units are cohomologically trivial in unramified extension (see \cite[Chapter 1, Proposition 1]{SerCF}) we find 
\begin{equation}\label{eq:shapiro}
\HH{i}{D}{\idunit{L}}\cong\bigoplus_{\mathfrak{l}\subseteq\rint{k}}\HH{i}{D(\mathfrak{L})}{\unit{L,\mathfrak{L}}}\cong\bigoplus_{\mathfrak{l}\in T(L/k)}\HH{i}{D(\mathfrak{L})}{\unit{L,\mathfrak{L}}}\qquad \text{for all } i\in\Z.
\end{equation}
The proposition follows from Lemma \ref{lemma:f_in_F}.
\end{proof}

Recall that, by Proposition \ref{fixcohom}, we have an isomorphism of $\Delta$-modules
\[
\HHN*{1}{G}{\pd{\idunit{L}}} \cong  \HHN*{1}{D}{\pd{\idunit{L}}}\oplus \HH*{-1}{D}{\pd{\idunit{L}}}.
\]
The following corollary gives another decomposition of $\HHN{1}{G}{\pd{\idunit{L}}}$, only as abelian groups, which turns out to be more useful than the above in the proof of Theorem \ref{yap}.

\begin{corollary}\label{cor:h1GhoGh1D}
There is an isomorphism of abelian groups
	\[
	\HHN{1}{G}{\pd{\idunit{L}}} \cong  \HHN{1}{D}{\pd{\idunit{L}}}\oplus \HH{0}{D}{\pd{\idunit{L}}}.
	\]
\end{corollary}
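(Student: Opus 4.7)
The plan is to use Proposition~\ref{fixcohom} to split $\HHN{1}{G}{\pd{\idunit{L}}}$ into $\Delta$-eigenparts, identify the plus-part with $\HHN{1}{D}{\pd{\idunit{L}}}$ directly, and then match the minus-part with $\HH{0}{D}{\pd{\idunit{L}}}$ via a chain of cohomological isomorphisms followed by a Shapiro-based local calculation. More precisely, since $\pd{\idunit{L}}$ is uniquely $2$-divisible, Proposition~\ref{fixcohom} yields the decomposition
\[
\HHN{1}{G}{\pd{\idunit{L}}} = \HH{1}{G}{\pd{\idunit{L}}}^{+} \oplus \HH{1}{G}{\pd{\idunit{L}}}^{-},
\]
with the plus-part isomorphic to $\HHN{1}{D}{\pd{\idunit{L}}}$. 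Applying the $\Delta$-antiequivariant Tate isomorphism and the $4$-periodicity of dihedral cohomology (Remark after Proposition~\ref{fixcohom}), the minus-part is isomorphic to $\HH{3}{D}{\pd{\idunit{L}}} \cong \HH{-1}{D}{\pd{\idunit{L}}}$. So the statement reduces to an isomorphism of abelian groups $\HH{-1}{D}{\pd{\idunit{L}}} \cong \HH{0}{D}{\pd{\idunit{L}}}$.

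To produce this remaining isomorphism I would proceed by Shapiro's lemma exactly as in the proof of Proposition~\ref{prop:coomuni}, writing
\[
\HH{i}{D}{\pd{\idunit{L}}} \cong \bigoplus_{\mathfrak{l}\in T(L/k)} \pd{\HH{i}{D(\mathfrak{L})}{\unit{L,\mathfrak{L}}}}
\]
and comparing summand by summand. For $\mathfrak{l} \in T_s(L/k)$ the decomposition group $D(\mathfrak{L})$ is cyclic (it sits inside $G$), so Tate cohomology is $2$-periodic and, by Lemma~\ref{lemma:f_in_F}, both $\pd{\HH{-1}{D(\mathfrak{L})}{\unit{L,\mathfrak{L}}}}$ and $\pd{\HH{0}{D(\mathfrak{L})}{\unit{L,\mathfrak{L}}}}$ are cyclic of order $e_\mathfrak{l}$. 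For $\mathfrak{l} \in T_i(L/k) \cup T_r(L/k)$ the decomposition group is dihedral of order $2e$ with $e$ odd (some element of $D(\mathfrak{L})$ lies outside $G$, and together with the cyclic inertia of odd order it generates a dihedral subgroup), and I claim both summands vanish upon inverting $2$.

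The vanishing of $\pd{\HH{0}{D(\mathfrak{L})}{\unit{L,\mathfrak{L}}}}$ is immediate from parts (a) and (b) of Lemma~\ref{lemma:f_in_F}, since the $\Z/2$ in case (b) dies under $\pd{\cdot}$. The vanishing of $\pd{\HH{-1}{D(\mathfrak{L})}{\unit{L,\mathfrak{L}}}}$, which is the crux, I would obtain from the Tate long exact sequence associated to the short exact sequence of $D(\mathfrak{L})$-modules
\[
0 \longrightarrow \unit{L,\mathfrak{L}} \longrightarrow L_\mathfrak{L}^\times \xrightarrow{\,v_\mathfrak{L}\,} \Z \longrightarrow 0.
\]
The term $\HH{-1}{D(\mathfrak{L})}{L_\mathfrak{L}^\times}$ vanishes by Lemma~\ref{lemma:Sylowtrivialcohom}: local Hilbert~$90$ gives $\HHN{1}{H_p}{L_\mathfrak{L}^\times}=0$ on every Sylow subgroup $H_p$ (all cyclic), whence $\HH{-1}{H_p}{L_\mathfrak{L}^\times}=0$ by periodicity, and the lemma propagates this to $D(\mathfrak{L})$. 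Consequently $\HH{-1}{D(\mathfrak{L})}{\unit{L,\mathfrak{L}}}$ is a quotient of $\HH{-2}{D(\mathfrak{L})}{\Z} \cong D(\mathfrak{L})^{\mathrm{ab}}$, and since $D(\mathfrak{L})$ is dihedral with rotation of odd order, its abelianisation is $\Z/2$, which disappears under $\pd{\cdot}$. Summing over $\mathfrak{l}$ then shows that both $\pd{\HH{-1}{D}{\idunit{L}}}$ and $\pd{\HH{0}{D}{\idunit{L}}}$ are isomorphic to $\bigoplus_{\mathfrak{l}\in T_s(L/k)} \Z/e_\mathfrak{l}$, completing the proof. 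The main obstacle is precisely this local vanishing in the dihedral cases; everything else is a formal manipulation based on Proposition~\ref{fixcohom}, Shapiro's lemma and the computations already carried out in Lemma~\ref{lemma:f_in_F} and Proposition~\ref{prop:coomuni}.
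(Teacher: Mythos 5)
Your argument is correct, but it is organised differently from the paper's. The paper proves the corollary by a single direct comparison: it applies Shapiro's lemma to $G$ over the primes of $F$, computes $\HHN{1}{G(\mathfrak{L})}{\unit{L,\mathfrak{L}}}\cong\Z/e_{\tilde{\mathfrak{l}}}$ for each such prime, regroups the summands according to the prime of $k$ below, and matches the resulting list (displayed in \eqref{eq:loc_cohom_G}) against the explicit computation of $\HHN{1}{D}{\idunit{L}}$ and $\HH{0}{D}{\idunit{L}}$ from Lemma \ref{lemma:f_in_F}; it never needs $\HH{-1}{D(\mathfrak{L})}{\unit{L,\mathfrak{L}}}$. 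You instead start from the canonical $\Delta$-decomposition $\HHN{1}{G}{\pd{\idunit{L}}}\cong\HHN{1}{D}{\pd{\idunit{L}}}\oplus\HH{-1}{D}{\pd{\idunit{L}}}$ of Proposition \ref{fixcohom} (the very decomposition the paper recalls just before the corollary and contrasts with it), thereby reducing the statement to the abstract isomorphism $\HH{-1}{D}{\pd{\idunit{L}}}\cong\HH{0}{D}{\pd{\idunit{L}}}$, which you then verify locally via Shapiro over the primes of $k$. This requires one extra local input absent from Lemma \ref{lemma:f_in_F}, namely that $\pd{\HH{-1}{D(\mathfrak{L})}{\unit{L,\mathfrak{L}}}}$ vanishes for non-split ramified primes; your derivation of this from the valuation sequence, the vanishing of $\HH{-1}{D(\mathfrak{L})}{L_\mathfrak{L}^\times}$ (local Hilbert 90 plus the Sylow argument of Lemma \ref{lemma:Sylowtrivialcohom}), and $\HH{-2}{D(\mathfrak{L})}{\Z}\cong D(\mathfrak{L})^{\mathrm{ab}}\cong\Z/2$ is sound, as is the cyclic-group computation at split primes where $D(\mathfrak{L})\subseteq G$. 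The trade-off: the paper's route is shorter and reuses only facts already proved, while yours makes conceptually transparent \emph{why} the two decompositions of $\HHN{1}{G}{\pd{\idunit{L}}}$ agree as abelian groups --- precisely because $\HH{-1}{D}{\pd{\idunit{L}}}$ and $\HH{0}{D}{\pd{\idunit{L}}}$ happen to be abstractly isomorphic, both being $\bigoplus_{\mathfrak{l}\in T_s(L/k)}\Z/e_{\mathfrak{l}}$ --- at the price of one additional local cohomology computation.
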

\begin{proof} 
The same arguments leading to \eqref{eq:shapiro} give an isomorphism
	\[
	\HHN{1}{G}{\idunit{L}}\cong\bigoplus_{\tilde{\mathfrak{l}}\subseteq \rint{F}}\HHN{1}{G(\mathfrak{L})}{\unit{L,\mathfrak{L}}}
	\] 
where for each prime $\tilde{\mathfrak{l}}$ in $F$ we have fixed a prime $\mathfrak{L}$ in $L$ above $\tilde{\mathfrak{l}}$. Therefore it is enough to show that, for each prime $\mathfrak{l}$ of $k$, 
\begin{equation}\label{eq:localcohomsplit}
\bigoplus_{\tilde{\mathfrak{l}}\mid \mathfrak{l}} \HHN{1}{G(\mathfrak{L})}{\pd{\unit{L,\mathfrak{L}}}}\cong  \HHN{1}{D(\mathfrak{L})}{\pd{\unit{L,\mathfrak{L}}}}\oplus \HH{0}{D(\mathfrak{L})}{\pd{\unit{L,\mathfrak{L}}}}\bigr).
\end{equation}
Arguing in a similar way as we did in the final part of the proof of Lemma \ref{lemma:f_in_F} we get an isomorphism
		\[
		\HHN{1}{G(\mathfrak{L})}{\unit{L,\mathfrak{L}}}\cong \Z/e_{\tilde{\mathfrak{l}}}
		\]
where $e_{\tilde{\mathfrak{l}}}$ is the ramification index of $\tilde{\mathfrak{l}}$ in $L/F$. Now if $\mathfrak{l}\in T_s(L/k)\cup T_i(L/k)$, then $e_{\tilde{\mathfrak{l}}}=e_\mathfrak{l}$, while, if $\mathfrak{l}\in T_r(L/k)$, then $e_{\tilde{\mathfrak{l}}}=e_\mathfrak{l}/2$. Thus
\begin{equation}\label{eq:loc_cohom_G}
\bigoplus_{\tilde{\mathfrak{l}}\mid \mathfrak{l}} \HHN{1}{G(\mathfrak{L})}{\pd{\unit{L,\mathfrak{L}}}}\cong\begin{cases} \Z/e_\mathfrak{l}\oplus\Z/e_\mathfrak{l}&\text{ if }\mathfrak{l}\in T_s(L/k)\\
\Z/e_\mathfrak{l}&\text{ if }\mathfrak{l}\in T_i(L/k)\\
\Z/\frac{e_\mathfrak{l}}{2}&\text{ if }\mathfrak{l}\in T_r(L/k)
\end{cases}
\end{equation}
Therefore \eqref{eq:localcohomsplit} follows by Lemma \ref{lemma:f_in_F}.
\end{proof}
	
We now come to the main result of this section. We begin with a result taken from \cite{Wal77}, which we reprove here for completeness; to state it, we introduce the notation $\dsyl{B}$ for the $2$-primary component of an abelian group $B$.

\begin{proposition}[{Walter, \cfr \cite[Theorem 5.3]{Wal77}}]\label{prop:walter}
There is an isomorphism
\[
\dsyl{\bigl(\clg{L}/\arext{L}{F}\clg{F}\bigr)}\cong \dsyl{\bigl(\clg{K}/\arext{K}{k}\clg{k}\bigr)}\oplus \dsyl{\bigl(\clg{K'}/\arext{K'}{k}\clg{k}\bigr)}.
\]
\end{proposition}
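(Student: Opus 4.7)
The plan is to apply Lemma \ref{lemma:split_Walter} to the finite $D$-module $C := \dsyl{\clg{L}}$, which is uniquely $q$-divisible since $q$ is odd and $C$ is a $2$-group. The key is to identify the three pieces: $C/C^G$ on the left, and the two summands $C^\Sigma/C^D$, $C^{\Sigma'}/C^D$ on the right, with the desired subquotients of class groups.

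First I would exploit the fact that $\gorder{G}=q$ is coprime to $\gorder{C}$: the element $e=\tfrac{1}{q}N_G$ is then a central idempotent in $\pd{\Z}[D]$, and the Maschke-type decomposition $C=C^G\oplus I_GC$ (with $C^G=N_GC$) is $D$-equivariant. Combining the identity $N_G=\arext{L}{F}\circ\arnm{L}{F}$ on $\clg{L}$ with the fact that $\arnm{L}{F}\circ\arext{L}{F}=q\cdot\mathrm{id}$ is an automorphism of $\dsyl{\clg{F}}$, the map $\arext{L}{F}$ is injective on $\dsyl{\clg{F}}$ with image exactly $C^G$, so $\dsyl{(\clg{L}/\arext{L}{F}\clg{F})}\cong C/C^G$. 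Then Lemma \ref{lemma:split_Walter} applied to $C$ furnishes
\[
C/C^G\;\cong\;C^\Sigma/C^D\;\oplus\;C^{\Sigma'}/C^D.
\]

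The second part of the plan is to identify $C^\Sigma/C^D$ with $\dsyl{(\clg{K}/\arext{K}{k}\clg{k})}$ (and symmetrically for $\Sigma'$ and $K'$) via the natural map $\bar\iota\colon\dsyl{\clg{K}}/\arext{K}{k}\dsyl{\clg{k}}\longrightarrow C^\Sigma/C^D$ induced by $\arext{L}{K}$. Well-definedness is clear because $\arext{L}{K}\circ\arext{K}{k}=\arext{L}{k}$ lands in $C^D$. Injectivity is obtained by applying $\arnm{L}{K}$ and invoking the Mackey formula $\arnm{L}{K}\circ\arext{L}{F}=\arext{K}{k}\circ\arnm{F}{k}$ (which holds since $L=KF$ and $K\cap F=k$), together with the invertibility of $q$ on $2$-parts: if $\arext{L}{K}(a)=\arext{L}{F}(b)\in C^D\subseteq C^G$, then $qa=\arext{K}{k}(\arnm{F}{k}(b))\in\arext{K}{k}\dsyl{\clg{k}}$, and one divides by $q$.

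The main obstacle is the surjectivity of $\bar\iota$. The $D$-equivariant splitting $C=C^G\oplus I_GC$ restricts on $\Sigma$-fixed points to $C^\Sigma=(C^G)^\Sigma\oplus(I_GC)^\Sigma=C^D\oplus(I_GC)^\Sigma$, using the isomorphism $\Sigma\cong\Delta$ to identify $(C^G)^\Sigma$ with $(C^G)^\Delta=C^D$; so $C^\Sigma/C^D\cong (I_GC)^\Sigma$ and we must see that $(1-e)\arext{L}{K}\colon\dsyl{\clg{K}}\to(I_GC)^\Sigma$ is surjective. Now the identity $\arext{L}{K}\circ\arnm{L}{K}=1+\sigma$ on $C$ only gives $2c=\arext{L}{K}(\arnm{L}{K}(c))$ for $c\in C^\Sigma$, which shows $2\cdot(I_GC)^\Sigma$ is in the image but leaves a possible $2$-torsion discrepancy, a genuine subtlety at $p=2$ (absent in the odd-part identity \eqref{eq:fixedequalincl} $\clgp{L}^\Sigma=\arext{L}{K}\clgp{K}$). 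To overcome this, the plan is to leverage Lemma \ref{lemma:2incl} together with the symmetric roles of $\Sigma$ and $\Sigma'$: every element of $I_GC$ decomposes in $C^\Sigma+C^{\Sigma'}$, and combined with the splitting off $C^D$, the $2$-torsion ambiguity is absorbed into $C^D$ and reshuffled between the two summands, yielding surjectivity of $\bar\iota$ and of its companion for $K'$.
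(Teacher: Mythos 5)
Your overall architecture --- apply Lemma \ref{lemma:split_Walter} to $C=\dsyl{(\clg{L})}$ and identify $C/C^G$ with $\dsyl{\bigl(\clg{L}/\arext{L}{F}\clg{F}\bigr)}$ via the norm argument --- matches the paper's, and those steps are fine. The problems both lie in the identification of $C^\Sigma/C^D$ with $\dsyl{\bigl(\clg{K}/\arext{K}{k}\clg{k}\bigr)}$. First, your injectivity argument contains an error: applying $\arnm{L}{K}$ to $\arext{L}{K}(a)$ yields $2a$, since $[L:K]=2$, not $qa$; multiplication by $2$ is of course not invertible on the $2$-primary component, so you cannot divide. (The step is salvageable: apply $\arnm{L}{F}$ instead. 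From $\arext{L}{K}(a)\in C^G$ one gets $q\,\arext{L}{K}(a)=N_G\arext{L}{K}(a)=\arext{L}{K}\arext{K}{k}\arnm{K}{k}(a)$ using the Mackey relation $\arnm{L}{F}\circ\arext{L}{K}=\arext{F}{k}\circ\arnm{K}{k}$, and injectivity of $\arext{L}{K}$ on $2$-parts together with invertibility of $q$ finishes it.) Second, and more seriously, your surjectivity argument is not a proof: you correctly observe that $\arext{L}{K}\circ\arnm{L}{K}=N_\Sigma$ only places $2\cdot(I_GC)^\Sigma$ inside the image, but the assertion that the resulting $2$-torsion discrepancy gets absorbed into $C^D$ and reshuffled between the two summands is not substantiated, and Lemma \ref{lemma:2incl} (which concerns the inclusion $I_GB\subseteq B^\Sigma+B^{\Sigma'}$) says nothing about the relation between $C^\Sigma$ and $\arext{L}{K}\dsyl{(\clg{K})}$.

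The ingredient you are missing is Lemma \ref{lemma:restr2Sylow}. The paper compares the kernels and cokernels of $\arext{L}{k}$ and $\arext{L}{K}$ via Proposition \ref{servesotto} and \eqref{eq:coker_qcohom}, uses Lemma \ref{lemma:restr2Sylow} to see that restriction from $D$ to $\Sigma$ identifies them on $2$-parts, and then the snake lemma delivers exactly your map $\bar\iota$ as an isomorphism. If you want to keep your direct approach, surjectivity also follows purely algebraically from Lemma \ref{lemma:restr2Sylow} with $i=0$ applied to $B=C$: surjectivity of the restriction $\HH{0}{D}{C}\to\HH{0}{\Sigma}{C}$ says precisely that $C^\Sigma=C^D+N_\Sigma C$, and $N_\Sigma C=\arext{L}{K}\bigl(\arnm{L}{K}C\bigr)\subseteq\arext{L}{K}\dsyl{(\clg{K})}$, which is the surjectivity you need.
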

\begin{proof} 
A discussion analogous to the one following equation \eqref{eq:formula_cl_fixed} implies that the arrows $\arext{L}{F},\arext{K}{k}$ and $\arext{K'}{k}$ are injections when restricted to the $2$-primary components of class groups, because their composition with $\arnm{L}{F},\arnm{K}{k}$ and $\arnm{K'}{k}$, respectively, is multiplication by $q$. To lighten typesetting, we suppress these arrows from the notation along the proof, identifying their domain with their image.

We first claim that extending ideals to $L$ induce isomorphisms
\begin{equation}\label{eq:iso_Walter}
\dsyl{\bigl(\clg{L}^\Sigma/\clg{L}^D\bigr)}\cong \dsyl{\bigl(\clg{K}/\clg{k}\bigr)}\qquad\text{and}\qquad \dsyl{\bigl(\clg{L}^{\Sigma'}/\clg{L}^D\bigr)}\cong \dsyl{\bigl(\clg{K'}/\clg{k}\bigr)}.
\end{equation}
We prove the first isomorphism, the proof for the second being analogous. Consider the commutative diagram
\begin{equation}\begin{split}\label{diag:for_Walter}
\xymatrix{
0\ar@{->}[0,1]&\dsyl{(\clg{k})}\ar@{->}[1,0]^{\arext{L}{k}}\ar@{->}[0,1]&\dsyl{(\clg{K})}\ar@{->}[1,0]^{\arext{L}{K}}\ar@{->}[0,1]&\dsyl{\bigl(\clg{K}/\clg{k}\bigr)}\ar@{->}[1,0]\ar@{->}[0,1]&0\\
0\ar@{->}[0,1]&\dsyl{\bigl(\clg{L}^D\bigr)}\ar@{->}[0,1]&\dsyl{\bigl(\clg{L}^{\Sigma}\bigr)}\ar@{->}[0,1]&\dsyl{\bigl(\clg{L}^\Sigma/\clg{L}^D\bigr)}\ar@{->}[0,1]&0
}\end{split}\end{equation}
Thanks to Proposition \ref{servesotto} we have isomorphisms
\[
\dsyl{(\kernel{\arext{L}{k}})}\cong\kernel\Bigl(\dsyl{\HHN{1}{D}{\unit{L}}}\longrightarrow \dsyl{\HHN{1}{D}{\idunit{L}}}\Bigr)
\]
and
\[
\dsyl{(\kernel{\arext{L}{K}})}\cong\kernel\Bigl(\dsyl{\HHN{1}{\Sigma}{\unit{L}}}\longrightarrow \dsyl{\HHN{1}{\Sigma}{\idunit{L}}}\Bigr)
\]
and Lemma \ref{lemma:restr2Sylow} shows that these two kernels are isomorphic. Similarly, the isomorphisms
\[
\dsyl{(\cokernel{\arext{L}{k}})}\cong \dsyl{\HHN{1}{D}{\q{L}}}\qquad\text{and}\qquad
\dsyl{(\cokernel{\arext{L}{K}})}\cong \dsyl{\HHN{1}{\Sigma}{\q{L}}}
\]
discussed in \eqref{eq:coker_qcohom}, again combined with Lemma \ref{lemma:restr2Sylow}, show that the cokernels are isomorphic. Applying the snake lemma to the commutative diagram \eqref{diag:for_Walter}, we establish \eqref{eq:iso_Walter}.

We now apply Lemma \ref{lemma:split_Walter} to the module $B=\dsyl{(\clg{L})}$: we find an isomorphism
\[
\dsyl{\bigl(\clg{L}/\clg{L}^G\bigr)}\cong \dsyl{\bigl(\clg{L}^\Sigma/\clg{L}^D\bigr)}\oplus \dsyl{\bigl(\clg{L}^{\Sigma'}/\clg{L}^D\bigr)}
\]
which, in light of \eqref{eq:iso_Walter}, we rewrite as
\[
\dsyl{\bigl(\clg{L}/\clg{L}^G\bigr)}\cong \dsyl{\bigl(\clg{K}/\clg{k}\bigr)}\oplus \dsyl{\bigl(\clg{K'}/\clg{k}\bigr)}.
\]
We conclude the proof by observing that the map $\arext{L}{F}$ induces an isomorphism $\dsyl{(\clg{L}^G)}\cong \dsyl{(\clg{F})}$ since $2$ is coprime to $q=[L:F]$.
\end{proof}

\begin{remark}
It is easy to see that Proposition \ref{prop:walter} holds more generally for $\ell$-parts, where $\ell$ is a prime not dividing $q$. We observe moreover that it is essentially an algebraic result and very little arithmetic is needed to prove it. In fact, the class field theory we used in the proof can be avoided by extending the use of Lemma \ref{lemma:restr2Sylow}. 

Furthermore, using some integral representation theory, one can show an even more precise result: for every prime $\ell \nmid q$ there is an isomorphism
\begin{equation}\label{eq:stronger2iso}
(\clg{L})_\ell \oplus (\clg{k})_\ell^2\cong (\clg{F})_\ell \oplus (\clg{K})_\ell^2.
\end{equation}
This can be proved as follows. By looking at the corresponding characters, one first observes that there is an isomorphism of $\Q[D]$-modules
\[
\Q[D] \oplus \Q^2 \cong \Q[D/G] \oplus \Q[D/\Sigma]^2.
\] 
Thanks to Conlon's induction theorem, this implies an isomorphism of $\Z_{(\ell)}[D]$-modules
\[
\Z_{(\ell)}[D] \oplus \Z_{(\ell)}^2 \cong \Z_{(\ell)}[D/G] \oplus \Z_{(\ell)}[D/\Sigma]^2
\]
(see for instance the proof \cite[Proposition 3.9]{Ba}). The above isomorphism in turn induces \eqref{eq:stronger2iso}, since class groups form a cohomological Mackey functor (in the sense of  \cite{Bo}, see specifically \cite[Corollary 1.4]{Bo}).
\end{remark}

For a number field $M$, we let $\cln{M} =\gorder{\clg{M}}$ denote the class number of $M$.
\begin{theorem}\label{yap}
Let $L/k$ be a Galois extension of number fields whose Galois group $D=\Gal(L/k)$ is dihedral of order $2q$ with $q$ odd. Let $\Sigma\subseteq D$ be a subgroup of order $2$ and $G\subseteq D$ be the subgroup of order $q$. Set $K=L^\Sigma$ and $F=L^G$. Then
\[
\frac{\cln{L}\cln{k}^2}{\cln{F}\cln{K}^2}=\frac{\hh*{0}{D}{\pd{\unit{L}}}}{\hh*{-1}{D}{\pd{\unit{L}}}} = \frac{\hh{0}{D}{\unit{L}}}{\hh{-1}{D}{\unit{L}}}\frac{\hh{-1}{\Sigma}{\unit{L}}}{\hh{0}{\Sigma}{\unit{L}}}.
\]
\end{theorem}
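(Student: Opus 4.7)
My plan is to prove the formula separately at the prime $2$ and in the odd part, where the latter is accessed by tensoring everything with $\pd{\Z}$. Because $\pd{\unit{L}}$ is uniquely $2$-divisible, the ratio $\hh*{0}{D}{\pd{\unit{L}}}/\hh*{-1}{D}{\pd{\unit{L}}}$ is automatically an odd rational, so at the prime $2$ the content of the theorem is the identity $\dsyl{(\cln{L}/\cln{F})}=\dsyl{(\cln{K}/\cln{k})}^2$. I would read this off Walter's Proposition \ref{prop:walter} using that $K$ and $K'$ are Galois conjugate (hence $\cln{K}=\cln{K'}$) and that the capitulation maps from $\clg{k}$ to $\clg{K}$ and from $\clg{F}$ to $\clg{L}$ are injective on $2$-primary components, because their compositions with the corresponding norms are multiplication by $2$ and $q$ respectively.

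For the odd part, I would first use Proposition \ref{loddpart}, together with the equalities $\gorder*{N_D\clgp{L}}=\gorder*{\clgp{k}}$ (noted in the remark preceding Proposition \ref{prop:gessisullalavagna}) and $\hhN*{0}{D}{\clgp{L}}=\gorder*{\clgp{L}^D}=\gorder*{\clgp{k}}\cdot\hh*{0}{D}{\clgp{L}}$, to reduce the left-hand side to
\[
\frac{\gorder*{\clgp{L}}\cdot\gorder*{\clgp{k}}^2}{\gorder*{\clgp{F}}\cdot\gorder*{\clgp{K}}^2}=\frac{\gorder*{\clgp{L}^G}}{\gorder*{\clgp{F}}\cdot\hh*{0}{D}{\clgp{L}}\cdot\hh*{-1}{D}{\clgp{L}}}.
\]
Applying Proposition \ref{servesotto} with $H=G$ and $M=F$ then rewrites $\gorder*{\clgp{L}^G}/\gorder*{\clgp{F}}$ as $\hhN*{1}{G}{\pd{\idunit{L}}}\cdot\hhN*{1}{G}{\pd{\princ{L}}}/\hhN*{1}{G}{\pd{\unit{L}}}$.

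My next move is to descend every $G$-cohomology group on the right to $D$-cohomology. Proposition \ref{fixcohom} together with the period-$2$ periodicity of $G$-Tate cohomology shows that, for any uniquely $2$-divisible $D$-module $B$,
\[
\hhN*{1}{G}{\pd{B}}=\hh*{-1}{D}{\pd{B}}\cdot\hhN*{1}{D}{\pd{B}},
\]
a splitting that for $B=\idunit{L}$ recovers Corollary \ref{cor:h1GhoGh1D} and that for $B=\unit{L}$ simplifies to $\hh*{-1}{D}{\pd{\unit{L}}}^2$ in view of the identification $\hh*{1}{D}{\pd{\unit{L}}}\cong\hh*{-1}{D}{\pd{\unit{L}}}$, which also follows from Proposition \ref{fixcohom}. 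Finally, I would apply Propositions \ref{servesotto} and \ref{prop:gessisullalavagna} with $H=D$ and $M=k$ to eliminate the remaining $D$-cohomology of $\pd{\idunit{L}}$ and $\pd{\princ{L}}$ in terms of $\hh*{0}{D}{\pd{\unit{L}}}$, $\hh*{-1}{D}{\pd{\unit{L}}}$, $\hh*{0}{D}{\clgp{L}}$ and $\hh*{-1}{D}{\clgp{L}}$. After substitution every auxiliary factor telescopes and only $\hh*{0}{D}{\pd{\unit{L}}}/\hh*{-1}{D}{\pd{\unit{L}}}$ survives, establishing the first equality.

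For the second equality, Lemma \ref{lemma:restr2Sylow} identifies the $2$-primary component of $\hh{i}{D}{\unit{L}}$ with $\hh{i}{\Sigma}{\unit{L}}$, so $\hh*{i}{D}{\pd{\unit{L}}}=\hh{i}{D}{\unit{L}}/\hh{i}{\Sigma}{\unit{L}}$ for $i=0,-1$; dividing these two ratios immediately produces the claimed product. The main obstacle I anticipate is purely combinatorial: one must track $D$- and $G$-cohomology of four different modules ($\pd{\unit{L}}$, $\pd{\idunit{L}}$, $\pd{\princ{L}}$ and $\clgp{L}$) simultaneously and carefully verify that all the auxiliary factors cancel in the final telescoping.
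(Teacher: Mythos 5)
Your architecture is the paper's own: the $2$-part via Proposition \ref{prop:walter}, the odd part by combining Propositions \ref{loddpart}, \ref{servesotto} and \ref{prop:gessisullalavagna}, and the second equality via Lemma \ref{lemma:restr2Sylow}. But two of your intermediate identities are false in general, and both are load-bearing. First, you claim $\gorder{N_D\clgp{L}}=\gorder{\clgp{k}}$ and $\hhN{0}{D}{\clgp{L}}=\gorder{\clgp{k}}\cdot\hh{0}{D}{\clgp{L}}$. The remark you invoke proves only $N_D\clgp{L}=\arext{L}{k}\bigl(\clgp{k}\bigr)$, whose order is $\gorder{\clgp{k}}$ divided by $\gorder{\pd{\kernel{\arext{L}{k}}}}$; the composite $\arnm{L}{k}\circ\arext{L}{k}$ is multiplication by $2q$, of which only the factor $2$ is invertible on $\clgp{k}$, so classes of $q$-power order may capitulate in $L$ and $\arext{L}{k}$ need not be injective on $\clgp{k}$. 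Your displayed reduction is therefore off by the factor $\gorder{\pd{\kernel{\arext{L}{k}}}}^2$. The paper never evaluates $\gorder{N_D\clgp{L}}$ or $\hhN{0}{D}{\clgp{L}}$ individually: these terms are cancelled en bloc against the left-hand sides of the $H=D$ instances of Propositions \ref{servesotto} and \ref{prop:gessisullalavagna}, whose right-hand sides carry $\gorder{\clgp{k}}$ precisely because they already absorb the capitulation kernel. (If you carry out your own final telescoping honestly, the leftover factor $\gorder{N_D\clgp{L}}^2/\gorder{\clgp{k}}^2$ reappears and compensates your initial error, so the end formula is right, but both displayed intermediates are false whenever capitulation occurs.)

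Second, $\HH{1}{D}{\pd{\unit{L}}}\cong\HH{-1}{D}{\pd{\unit{L}}}$ does not follow from Proposition \ref{fixcohom}: the $D$-Tate cohomology of a uniquely $2$-divisible module is periodic of period $4$, not $2$, and the proposition identifies $\HH{1}{D}{B}$ with $\HH{-1}{G}{B}^-$ and $\HH{-1}{D}{B}$ with $\HH{-1}{G}{B}^+$, two eigenspaces with no reason to have equal orders. The telescoping only needs $\hhN{1}{G}{\pd{\unit{L}}}=\hhN{1}{D}{\pd{\unit{L}}}\cdot\hh{-1}{D}{\pd{\unit{L}}}$, with $\hhN{1}{D}{\pd{\unit{L}}}$ cancelling against the term produced by Proposition \ref{servesotto} for $H=D$; rewriting it as $\hh{-1}{D}{\pd{\unit{L}}}^2$ smuggles in the false identity and leaves an uncancelled ratio $\hhN{1}{D}{\pd{\unit{L}}}/\hh{-1}{D}{\pd{\unit{L}}}$. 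Relatedly, the splitting of $\hhN{1}{G}{\pd{\idunit{L}}}$ given by Proposition \ref{fixcohom} produces $\hh{-1}{D}{\pd{\idunit{L}}}$, whereas Proposition \ref{prop:gessisullalavagna} consumes $\hh{0}{D}{\pd{\idunit{L}}}$; that these two orders coincide is exactly the content of Corollary \ref{cor:h1GhoGh1D}, proved via the local analysis of Lemma \ref{lemma:f_in_F}, and is not a formal consequence of Proposition \ref{fixcohom} --- the paper stresses this distinction just before stating the corollary. Your treatment of the $2$-part and of the second equality is correct and matches the paper.
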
\begin{proof}

We start by proving the first equality. Concerning the $2$-part, this an immediate consequence of Proposition \ref{prop:walter}, observing that the cohomology groups with values in the uniquely $2$-divisible module $\pd{\unit{L}}$ have trivial $2$-components.

We now focus on the odd part. From Proposition \ref{loddpart}, we have
\begin{equation}\label{eq:startingpoint}
\gorder*{\clgp{L}}=\gorder*{\clgp{K}}^2\frac{\gorder*{\clgp{L}^{G}}}{\hhN*{0}{D}{\clgp{L}}\hh*{-1}{D}{\clgp{L}}  \gorder*{N_D \clgp{L}}}.
\end{equation}
Note that Proposition \ref{servesotto} with $H=G$ gives 
\begin{equation}\label{eq:ambclassew}
\gorder*{\clgp{L}^{G}} = \gorder*{\clgp{F}} \cdot \frac{ \hhN*{1}{G}{\pd{\idunit{L}}}\hhN*{1}{G}{\pd{\princ{L}}}}{ \hhN*{1}{G}{\pd{\unit{L}}}},
\end{equation}
and we can also replace the term $\hhN{1}{G}{\pd{\princ{L}}}$ by $\hhN{1}{D}{\pd{\princ{L}}}\hh{-1}{D}{\pd{\princ{L}}}$ in view of Proposition \ref{fixcohom}. By Corollary \ref{cor:h1GhoGh1D} we have 
\begin{equation}\label{eq:idunitDG}
\hhN*{1}{G}{\pd{\idunit{L}}}=\hhN*{1}{D}{\pd{\idunit{L}}}\hh*{0}{D}{\pd{\idunit{L}}}.
\end{equation}
Thus \eqref{eq:startingpoint} becomes 
\begin{align*}
\frac{\gorder*{\clgp{L}}}{\gorder*{\clgp{K}}^2}  &= \frac{\gorder*{\clgp{L}^G}}{\hhN*{0}{D}{\clgp{L}}\hh*{-1}{D}{\clgp{L}}\cdot \gorder*{N_D \clgp{L}}}\\
\text{(by \eqref{eq:ambclassew})} &=\gorder*{\clgp{F}}\cdot \frac{\hhN*{1}{G}{\pd{\idunit{L}}}\hhN*{1}{D}{\pd{\princ{L}}}\hh*{-1}{D}{\pd{\princ{L}}}}{\hhN*{1}{G}{\unit{L}}\hhN*{0}{D}{\clgp{L}} \hh*{-1}{D}{\clgp{L}} \cdot \gorder*{N_D \clgp{L}}}\\
\text{(by \eqref{eq:idunitDG})}&= \gorder*{\clgp{F}} \cdot \frac{\hhN*{1}{D}{\pd{\idunit{L}}}\hhN*{1}{D}{\pd{\princ{L}}}}{\hhN*{1}{G}{\pd{\unit{L}}}\hhN*{0}{D}{\clgp{L}} }\cdot\frac{\hh*{0}{D}{\pd{\idunit{L}}}\hh*{-1}{D}{\pd{\princ{L}}}}{\hh*{-1}{D}{\clgp{L}} \cdot \gorder*{N_D \clgp{L}}}\\
\text{(by Proposition \ref{prop:gessisullalavagna})}&=\gorder*{\clgp{F}}\cdot \frac{\hhN*{1}{D}{\pd{\idunit{L}}}\hhN*{1}{D}{\pd{\princ{L}}}}{\hhN*{1}{G}{\pd{\unit{L}}}\hhN*{0}{D}{\clgp{L}} }\cdot\frac{\hh*{0}{D}{\pd{\unit{L}}}}{\gorder*{\clgp{k}}}\\
\text{(by Proposition \ref{servesotto} with $H=D$)}&=\frac{\gorder*{\clgp{F}}}{\hhN*{1}{G}{\pd{\unit{L}}}}\cdot\frac{\hhN*{1}{D}{\pd{\unit{L}}}}{\gorder*{\clgp{k}} }\cdot\frac{\hh*{0}{D}{\pd{\unit{L}}}}{\gorder*{\clgp{k}}}\\
\text{(by Proposition \ref{fixcohom})} &= \frac{\gorder*{\clgp{F}}}{\gorder*{\clgp{k}}^2}\cdot\frac{\hh*{0}{D}{\pd{\unit{L}}}}{\hh*{-1}{D}{\pd{\unit{L}}}}.
\end{align*}
This completes the proof of the first equality of the theorem. 
		
To obtain the second equality in the statement, observe that 
\[
\HH*{i}{D}{\pd{\unit{L}}}\cong \HH{i}{D}{\unit{L}}/\dsyl{\HH{i}{D}{\unit{L}}}
\]
where $\dsyl{\HH{i}{D}{\unit{L}}}$ is the $2$-primary component of $\HH{i}{D}{\unit{L}}$. We conclude by Lemma \ref{lemma:restr2Sylow}.
\end{proof}   
	
Now we recall the definition of the Herbrand quotient of a $G$-module $B$ as
\[
\herbr{G}{B}=\frac{\hh{0}{G}{B}}{\hhN{1}{G}{B}}
\]
whenever the quotient is well-defined. The following result will be crucial for our computations.

\begin{herbrand}[see {\cite[Corollary 2 of Theorem 1, Chapter IX]{La}}] The Herbrand quotient of the $G$-module $\unit{L}$ is
\[
\herbr{G}{\unit{L}}=\frac{1}{q}.
\]
\end{herbrand}
	
Using the value of the Herbrand quotient of units, we can easily obtain bounds on the value of the ratio of class numbers of subfields of $L$. For a number field $M$ we set
\[
\defect{M}{q}=
\begin{cases}
0&\text{if $\roots[q]{M}$ is trivial,}\\
1&\text{otherwise}
\end{cases}
\]   	
where $\roots{M}$ denotes the group of roots of unity of $M$and $\roots[n]{M}$ is the subgroup of roots of unity of $M$ killed by $n\geq 1$. In what follows, for a prime number $\ell$, $v_\ell$ denotes the $\ell$-adic valuation.

\begin{corollary} \label{cor:bounds} 
Let notation be as in Theorem \ref{yap}. For every prime $\ell$, the following bounds hold
\[
-av_\ell(q)\leq v_\ell\left(\frac{\cln{L}\cln{k}^2}{\cln{F}\cln{K}^2}\right)\leq bv_\ell(q) 
\]
where $a=\rank_\Z(\unit{F}) + \defect{F}{q}+1$ and $b=\rank_\Z(\unit{k})+\defect{k}{q}$. In particular, if $F$ is totally real of degree $[F\colon\Q]=2d$, then
\[
-2dv_\ell(q)\leq v_\ell\left(\frac{\cln{L}\cln{k}^2}{\cln{F}\cln{K}^2}\right)\leq (d-1)v_\ell(q). 
\]
\end{corollary}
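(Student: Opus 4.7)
The plan is to start from Theorem~\ref{yap}, which rewrites the class number ratio as $\hh*{0}{D}{\pd{\unit{L}}}/\hh*{-1}{D}{\pd{\unit{L}}}$, reducing the inequalities to bounding $v_\ell$ of the numerator and the denominator separately. For any prime $\ell \nmid q$ (including $\ell = 2$), the group $\HH{i}{D}{\pd{\unit{L}}}$ is a $\pd{\Z}$-module annihilated by $|D| = 2q$, and since $2$ is invertible in $\pd{\Z}$ it is annihilated by~$q$, so its $\ell$-part is trivial. Since $v_\ell(q) = 0$ in this case, both required inequalities hold trivially. The real content therefore concerns odd primes $\ell \mid q$, and for such~$\ell$ the $\ell$-primary part of $\HH{i}{D}{\pd{\unit{L}}}$ coincides with that of $\HH{i}{D}{\unit{L}}$.

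For the upper bound I would use the identification $\HH{0}{D}{\unit{L}} = \unit{k}/N_{L/k}\unit{L}$. Since $N_{L/k}$ acts on $\unit{k}$ as raising to the $2q$-th power, this group is a further quotient of $\unit{k}/\unit{k}^{2q}$. The decomposition $\unit{k} \cong \roots{k} \oplus \Z^{r_k}$, with $r_k = \rank_\Z(\unit{k})$, gives $|\unit{k}/\unit{k}^{2q}| = |\roots[2q]{k}| \cdot (2q)^{r_k}$. A direct case analysis on whether $\defect{k}{q} = 0$ or $1$ shows that $v_\ell(|\roots[2q]{k}|) \leq \defect{k}{q}\cdot v_\ell(q)$ for every odd $\ell\mid q$, yielding $v_\ell(\hh*{0}{D}{\pd{\unit{L}}}) \leq (r_k + \defect{k}{q})\cdot v_\ell(q) = b\cdot v_\ell(q)$.

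For the lower bound, Proposition~\ref{fixcohom} provides an injection $\HH{-1}{D}{\pd{\unit{L}}} \hookrightarrow \HH{-1}{G}{\pd{\unit{L}}}$, so at odd $\ell$ one obtains
\[
v_\ell\bigl(\hh*{-1}{D}{\pd{\unit{L}}}\bigr) \;\leq\; v_\ell\bigl(\hh{-1}{G}{\unit{L}}\bigr) \;=\; v_\ell\bigl(\hh{1}{G}{\unit{L}}\bigr),
\]
the last equality coming from the period-$2$ periodicity of Tate cohomology for the cyclic group~$G$. The Herbrand quotient $\herbr{G}{\unit{L}} = 1/q$ then gives $\hh{1}{G}{\unit{L}} = q \cdot \hh{0}{G}{\unit{L}}$, and the same type of estimate applied to $\HH{0}{G}{\unit{L}} = \unit{F}/N_{L/F}\unit{L}$ (a quotient of $\unit{F}/\unit{F}^q$, since $N_{L/F}$ is raising to the $q$-th power on $\unit{F}$) yields $v_\ell(\hh{0}{G}{\unit{L}}) \leq (r_F + \defect{F}{q})\cdot v_\ell(q)$. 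Combining, $v_\ell(\hh*{-1}{D}{\pd{\unit{L}}}) \leq (r_F + \defect{F}{q} + 1)\cdot v_\ell(q) = a\cdot v_\ell(q)$, which provides the desired lower bound $v_\ell(\text{ratio}) \geq -a v_\ell(q)$.

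The last assertion, for $F$ totally real of degree $2d$, should then be a short computation: $\roots{F} = \{\pm 1\}$ forces $\defect{F}{q} = 0$ because $q$ is odd, and $r_F = 2d-1$; the subfield $k$ is then totally real of degree~$d$, so $r_k = d-1$ and $\defect{k}{q} = 0$. Plugging these into the general bounds gives $a = 2d$ and $b = d-1$, as claimed. The one genuinely delicate point in the whole argument is the uniform inequality $v_\ell(|\roots[2q]{M}|) \leq \defect{M}{q}\cdot v_\ell(q)$ for $M \in \{k, F\}$, which is exactly what forces the quantity $\defect{M}{q}$ into the final formulas; everything else amounts to assembling the pieces already supplied by Theorem~\ref{yap}, Proposition~\ref{fixcohom}, and the Herbrand quotient of units.
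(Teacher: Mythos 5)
Your proposal is correct and follows essentially the same route as the paper: reduce via Theorem \ref{yap} to bounding $\hh*{0}{D}{\pd{\unit{L}}}$ and $\hh*{-1}{D}{\pd{\unit{L}}}$, control the former as a quotient of $\unit{k}$ killed by (a multiple of) $q$, and control the latter through the injection into $\HH{-1}{G}{\pd{\unit{L}}}$, periodicity for the cyclic group $G$, the Herbrand quotient $\herbr{G}{\unit{L}}=1/q$, and the analogous bound on $\HH{0}{G}{\unit{L}}$ as a quotient of $\unit{F}$. The only differences are cosmetic (your explicit dispatch of primes $\ell\nmid q$ and your use of $\unit{k}/(\unit{k})^{2q}$ where the paper works directly with the $q$-torsion quotient of $\pd{\unit{k}}$).
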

\begin{proof}
Thanks to Theorem \ref{yap}, we only have to prove that the prescribed bounds hold for the ratio
\[
\hh*{0}{D}{\pd{\unit{L}}}/\hh*{-1}{D}{\pd{\unit{L}}}.
\]
Observe that $\HH{0}{G}{\pd{\unit{L}}}=\pd{\unit{F}}/N_G\pd{\unit{L}}$ is a quotient of $\pd{\unit{F}}$ annihilated by $q$. In particular its order divides $q^{\rank_\Z(\unit{F})+\defect{F}{q}}$. Using the value of the Herbrand quotient of units, we therefore have 
\begin{align*}
v_\ell\bigl(\hh*{-1}{D}{\pd{\unit{L}}}\bigr) &\leq v_\ell\bigl(\hh*{-1}{G}{\pd{\unit{L}}}\bigr) = v_\ell(q)+v_\ell\bigl(\hh{0}{G}{\pd{\unit{L}}}\bigr)
\\ &\leq v_\ell(q) + \bigl(\defect{F}{q} + \rank_\Z(\unit{F})\bigr)v_\ell(q)\\
&=av_\ell(q).
\end{align*}
We deduce that
\[
v_\ell\left(\frac{\hh*{0}{D}{\pd{\unit{L}}}}{\hh*{-1}{D}{\pd{\unit{L}}}}\right)=v_\ell\bigl(\hh*{0}{D}{\pd{\unit{L}}}\bigr)- v_\ell\bigl(\hh*{-1}{D}{\pd{\unit{L}}}\bigr) \geq- v_\ell\bigl(\hh*{-1}{D}{\pd{\unit{L}}}\bigr)\geq -av_\ell(q).
\]
Similarly, $\HH{0}{D}{\pd{\unit{L}}}=\pd{\unit{k}}/N_D\pd{\unit{L}}$ is a quotient of $\pd{\unit{k}}$ annihilated by $q$. In particular its order divides $q^{\rank_\Z(\unit{k})+\defect{k}{q}}$ and therefore
\[
v_\ell\bigl(\hh*{0}{D}{\pd{\unit{L}}}\bigr)\leq v_\ell(q^{\rank_\Z(\unit{k})+\defect{k}{q}}) = bv_\ell(q). 
\]
We deduce that 
\[	v_\ell\left(\frac{\hh*{0}{D}{\pd{\unit{L}}}}{\hh*{-1}{D}{\pd{\unit{L}}}}\right)=v_\ell\bigl(\hh*{0}{D}{\pd{\unit{L}}}\bigr)- v_\ell\bigl(\hh*{-1}{D}{\pd{\unit{L}}}\bigr)  \leq v_\ell\bigl(\hh*{0}{D}{\pd{\unit{L}}}\bigr) \leq bv_\ell(q).
\]

When $F$ is totally real, it contains no roots of unity different from $\pm 1$ so that $a=2d$ and $b=(d-1)$.
\end{proof}

\begin{remark}
It would be interesting to understand to what extent the bounds of Corollary \ref{cor:bounds} are optimal. To be more precise, for given $a, b\in \Z$, consider the set 
\[
S(a,b) = \textrm{$\{c\in \Q$ such that $-av_\ell(q) \leq v_\ell(c) \leq bv_\ell(q),$ for every prime $\ell\}$}.
\] 
One  could ask whether, for every $a \geq 1$ and $b\geq 0$ and every $c\in S(a,b)$, there exist a number field $k_c$, a quadratic extension $F_c/k_c$ satisfying
\[
a = \rank_\Z(\unit{F_c})+\defect{F_c}{q}+1\qquad\text{and}\qquad b = \rank_\Z(\unit{k_c}) + \defect{k_c}{q}
\]
and a dihedral extensions $L_c/k_c$ of degree $2q$ such that $F_c\subseteq L_c$ and 
\[
\frac{\cln{L_{c}}\cln{k_c}^2}{\cln{F_c}\cln{K_c}^2} = c,
\]
where $K_c/k_c$ is a subextension of $L_c/k_c$ of degree $q$.
\end{remark}
		
The bounds of Corollary \ref{cor:bounds} hold in full generality, \ie for an arbitrary base field $k$. When $k$ is totally real and $F$ is a CM field, these bounds can be improved, as we show in Corollary \ref{cor:boundsCM}, which generalizes \cite[Example 6.3]{Ba}. We start with the following lemma, whose first statement  is well-known.

\begin{lemma}\label{lemma:rootsunity}
We have
\[
\roots{F}=\roots{L}\qquad\text{and}\qquad\roots{k}=\roots{K}.
\]
Moreover, the intersection $I_G\unit{L}\cap \unit{F}$ is a finite cyclic group and there is a $\Delta$-antiequivariant isomorphism
\[
\bigl((\unit{L})^q\cap \unit{F}\bigr)/(\unit{F})^q \cong I_G\unit{L}\cap \unit{F}.
\]
In particular we have isomorphisms of abelian groups
\[
\Bigl((\unit{L})^q\cap\unit{F}/(\unit{F})^q\Bigr)^\mp \cong  \Bigl(I_G\unit{L}\cap \unit{F}\Bigr)^\pm.
\]
\end{lemma}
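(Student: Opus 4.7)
The argument breaks into four claims that I would address in sequence. For the root-of-unity equalities, I would show that $G$ acts trivially on $\roots{L}$, which forces $\roots{L}\subseteq L^G=F$. For any $n\geq 1$, the action of $D$ on the cyclic group $\roots[n]{L}$ is described by a character $\chi_n\colon D\to(\Z/n)^\times$. The relation $\rho^q=1$ forces $\chi_n(\rho)^q\equiv 1\pmod n$; combining the dihedral relation $\sigma\rho=\rho^{-1}\sigma$ with $\sigma^2=1$ forces $\chi_n(\rho)^2\equiv 1\pmod n$; and since $\gcd(2,q)=1$ we deduce $\chi_n(\rho)\equiv 1\pmod n$. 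For the second equality, $\roots{K}$ is fixed by $\Sigma$ by definition and by $G$ from what we just proved, hence by $D=\langle\Sigma,G\rangle$, so $\roots{K}\subseteq L^D=k$.

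For the structure of $I_G\unit{L}\cap\unit{F}$, the key preliminary is that $I_G=(1-\rho)\Z[G]$ as an ideal (since $G$ is generated by $\rho$), and therefore multiplicatively $I_G\unit{L}=\{v/\rho(v):v\in\unit{L}\}$. Any such element is killed by $N_G$, whereas an $x\in\unit{F}$ satisfies $N_G(x)=x^q$. Hence $I_G\unit{L}\cap\unit{F}\subseteq\roots[q]{F}$, which proves both finiteness and cyclicity simultaneously.

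For the main isomorphism I would define
\[
\bar\phi\colon(\unit{L})^q\cap\unit{F}\longrightarrow I_G\unit{L}\cap\unit{F},\qquad v^q\longmapsto v/\rho(v).
\]
Well-definedness follows because two $q$-th roots of a given element differ by an element of $\roots[q]{L}=\roots[q]{F}$ which is $\rho$-fixed; the image lies in $\unit{F}$ because $(v/\rho(v))^q=v^q/\rho(v^q)=1$ places it in $\roots[q]{L}=\roots[q]{F}\subseteq\unit{F}$. The kernel is exactly $(\unit{F})^q$, and surjectivity follows by writing any target $z$ as $z=v/\rho(v)$ and observing that $z^q=1$ forces $v^q\in\unit{F}$. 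For $\Delta$-antiequivariance, set $w=\sigma(v)$ and compute
\[
\bar\phi(\sigma(v^q))=w/\rho(w),\qquad\sigma(\bar\phi(v^q))=\sigma(v)/\sigma(\rho(v))=w/\rho^{-1}(w),
\]
using $\sigma\rho=\rho^{-1}\sigma$. Since $w^q\in\unit{F}$, the ratio $\zeta=\rho(w)/w$ lies in $\roots[q]{F}$ and is thus $\rho$-fixed; applying $\rho^{-1}$ to $\rho(w)=\zeta w$ yields $\rho^{-1}(w)=\zeta^{-1}w$, whence $\rho(w)\cdot\rho^{-1}(w)=w^2$. This immediately gives $\bar\phi(\sigma(u))\cdot\sigma(\bar\phi(u))=1$, which is precisely $\Delta$-antiequivariance.

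The final assertion about $\pm$-parts is then formal: since both sides of the isomorphism are annihilated by the odd integer $q$, they are uniquely $2$-divisible, so the decomposition \eqref{2divid} into $+$ and $-$ eigenspaces applies, and $\Delta$-antiequivariance automatically swaps the two signs. The step I expect to require the most care is the antiequivariance computation, which depends crucially on combining the dihedral relation $\sigma\rho=\rho^{-1}\sigma$ with the triviality of the $\rho$-action on $\roots[q]{L}$ proved in the first paragraph; the remaining verifications are essentially bookkeeping around the identification $I_G\unit{L}=\{v/\rho(v)\}$.
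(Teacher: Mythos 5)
Your proof is correct, but it takes a more explicit route than the paper's. For the first assertion the paper simply observes that $k(\zeta)/k$ is abelian and hence contained in $F$, which is the same idea as your character computation (the character factors through $D^{ab}=D/G$). For the main isomorphism the paper stays entirely inside its cohomological framework: it takes $G$-cohomology of the Kummer sequence to identify $\bigl((\unit{L})^q\cap\unit{F}\bigr)/(\unit{F})^q$ with $\kernel\bigl(\HHN{1}{G}{\roots[q]{L}}\to\HHN{1}{G}{\unit{L}}\bigr)$ $\Delta$-equivariantly, then invokes the $\Delta$-antiequivariance of the Tate periodicity isomorphism (Proposition \ref{fixcohom}) to shift to degree $-1$, and finally applies Lemma \ref{lemma:schifo=nucleo} to recognize the resulting kernel as $I_G\unit{L}\cap\unit{F}$ and to see that it sits inside $\roots[q]{F}$. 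You instead write down the map $v^q\mapsto v/\rho(v)$ --- which is exactly the connecting homomorphism made explicit via $I_G\unit{L}=(1-\rho)\unit{L}$ --- and verify well-definedness, kernel, surjectivity and antiequivariance by hand; your finiteness/cyclicity argument ($N_G$ kills $I_G\unit{L}$ while acting as $q$-th power on $\unit{F}$) is likewise a direct version of Lemma \ref{lemma:schifo=nucleo}. What the paper's approach buys is uniformity with the rest of its machinery (in particular, the antiequivariance comes for free from the general cup-product statement rather than from the computation $\rho(w)\rho^{-1}(w)=w^2$, which is the one genuinely delicate step in your version and which you carry out correctly); what yours buys is self-containedness and transparency about where each hypothesis, notably $\roots[q]{L}=\roots[q]{F}$ being $\rho$-fixed, is actually used.
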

\begin{proof}
Let $\zeta$ be a root of unity of $L$, and set $M=k(\zeta)$. Then $M/k$ is an abelian extension of $k$ contained in $L$ and therefore $M\subseteq F$. In particular $\roots{F}=\roots{L}$ and, taking $\Sigma$-invariants, $\roots{k}=\roots{K}${, so that the first statement is proved}.

To prove the rest of the lemma, consider the Kummer sequence
\begin{equation}\label{eq:kummer}
1\longrightarrow\roots[q]{L}\longrightarrow\unit{L}\overset{(\cdot)^q}{\longrightarrow}(\unit{L})^q\longrightarrow 1.
\end{equation}
Taking $G$-cohomology, we obtain a $\Delta$-equivariant isomorphism
\begin{equation}\label{eq:delta_iso_schifo=ker}
\bigl((\unit{L})^q\cap \unit{F}\bigr)/(\unit{F})^q=\kernel\bigl(\HHN{1}{G}{\roots[q]{L}}\to\HHN{1}{G}{\unit{L}}\bigr).
\end{equation}
Using Proposition \ref{fixcohom} we deduce a $\Delta$-antiequivariant isomorphism
\[
\bigl((\unit{L})^q\cap \unit{F}\bigr)/(\unit{F})^q\cong\kernel\bigl(\HH{-1}{G}{\roots[q]{L}}\to\HH{-1}{G}{\unit{L}}\bigr).
\] 
By the first statement of the lemma, the $D$-module $B=\pd{\unit{L}}$ satisfies $\tors{B}^G=\tors{B}$ and we can apply Lemma \ref{lemma:schifo=nucleo}. In particular we obtain a $\Delta$-antiequivariant isomorphism 
\[
\bigl((\unit{L})^q\cap \unit{F}\bigr)/(\unit{F})^q\cong I_G\unit{L}\cap \unit{F}.
\] 
Observe that, again by Lemma \ref{lemma:schifo=nucleo}, $I_G\unit{L}\cap \unit{F}$ is equal to $I_G\unit{L}\cap \roots[q]{F}\subseteq \roots{F}$, whence the claim that it is cyclic. 
\end{proof}

We can now state the following result which sharpens Corollary \ref{cor:bounds} under the assumption that $F$ is a CM field and $F^+=k$. We write $\roots[{q^\infty}]{F}$ to denote the roots of unity $\zeta\in \roots{F}$ such that $\zeta^{q^m}=1$ for some $m\geq 0$: it is clearly a finite group.

\begin{corollary}\label{cor:boundsCM}
With notation as in Theorem \ref{yap}, assume moreover that $F$ is a CM field of degree $[F:\Q]=2d$ and that $k$ is its totally real subfield. Let $s$ be the order of the quotient $\bigl((\unit{K})^q\cap\unit{k}\bigr)/(\unit{k})^q$ and let $t$ be the order of $\roots[{q^\infty}]{F}/\roots[q]{F}$. Then, for every prime $\ell$,
\[
-v_\ell(q)-v_\ell(s)-v_\ell(t)\leq v_\ell\left(\frac{\cln{L}\cln{k}^2}{\cln{F}\cln{K}^2}\right)\leq (d-1)v_\ell(q).
\]
\end{corollary}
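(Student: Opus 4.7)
The upper bound $(d-1)v_\ell(q)$ follows immediately from Corollary \ref{cor:bounds}: since $k$ is totally real of degree $d$, Dirichlet's theorem gives $\rank_\Z\unit{k}=d-1$, and $q$ odd forces $\roots[q]{k}$ to be trivial so $\defect{k}{q}=0$; hence $b=d-1$.

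For the lower bound, by Theorem \ref{yap} it suffices to show
\[
v_\ell\bigl(\hh{-1}{D}{\pd{\unit{L}}}\bigr)\leq v_\ell(q)+v_\ell(s)+v_\ell(t).
\]
The plan is to refine the crude estimate $\hh{-1}{D}{\pd{\unit{L}}}\leq q\cdot\hh{0}{G}{\pd{\unit{L}}}$ used in the proof of Corollary \ref{cor:bounds} by exploiting the CM decomposition $\pd{\unit{F}}=\pd{\unit{k}}\oplus\pd{\roots{F}}$ (in which complex conjugation $\delta$ is trivial on the first summand and inverts the second). The key observation is that Proposition \ref{fixcohom} identifies $\HH{-1}{D}{\pd{\unit{L}}}$ with the $\Delta$-plus part $\HH{-1}{G}{\pd{\unit{L}}}^{+}$, so the $\pd{\roots{F}}$ contribution, which lies on the $\Delta$-minus side, is largely suppressed and enters only through the finer filtration $\roots[q]{L}\subseteq\roots[q^\infty]{L}$, whose quotient has order $t$.

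Concretely, I would first reduce to the torsion-free quotient $W=\pd{\unit{L}}/\pd{\roots{L}}$: using Lemma \ref{lemma:rootsunity} (so that $\pd{\roots{L}}=\pd{\roots{F}}$) and the inversion action of $\Delta$ on it, Proposition \ref{fixcohom} gives $\HH{-1}{D}{\pd{\roots{L}}}=\HH{0}{D}{\pd{\roots{L}}}=0$; the exact sequence $1\to\pd{\roots{L}}\to\pd{\unit{L}}\to W\to 1$ then yields $\HH{-1}{D}{\pd{\unit{L}}}\cong\HH{-1}{D}{W}$. Next, I would combine the Herbrand identity $\hh{-1}{G}{W}=q\cdot\hh{0}{G}{W}$ (from $\herbr{G}{\pd{\unit{L}}}=1/q$ and $\herbr{G}{\pd{\roots{L}}}=1$) with the identification $\HH{0}{D}{\pd{\unit{L}}}\cong\pd{\unit{k}}/N_{K/k}(\pd{\unit{K}})$ (using $(\pd{\unit{F}})^{+}=\pd{\unit{k}}$ and the fact that $N_G$ restricted to $\pd{\unit{K}}$ coincides with $N_{K/k}$), and observe that the chain $\pd{\unit{k}}^q\subseteq N_{K/k}(\pd{\unit{K}})\subseteq\pd{\unit{k}}$ is controlled by the non-Galois Kummer invariant $s=|(\unit{K}^q\cap\unit{k})/\unit{k}^q|$. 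The factor $t$ should then arise from the filtration $0\to\roots[q]{L}\to\roots[q^\infty]{L}\to\roots[q^\infty]{L}/\roots[q]{L}\to 0$ when reinjecting the torsion data into the Herbrand estimate.

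The main obstacle is that $K/k$ is not Galois, so $s$ does not admit a direct cohomological interpretation as a Tate cohomology group. I expect to overcome this via Lemma \ref{lemma:quozcoom}, which realizes $\HH{-1}{D}{\pd{\unit{L}}}$ as the explicit subquotient $\bigl(\pd{\unit{L}}[N_G]\cap(\pd{\unit{K}}+\pd{\unit{K'}})\bigr)/I_G\pd{\unit{L}}$: this description should allow the $K/k$-Kummer data to reinject into the $D$-module computation, producing exactly the bound $qst$ on $\hh{-1}{D}{\pd{\unit{L}}}$.
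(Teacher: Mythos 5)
Your upper bound is fine: $k$ totally real of degree $d$ and $q$ odd give $\rank_\Z(\unit{k})=d-1$ and $\defect{k}{q}=0$, hence $b=d-1$ in Corollary \ref{cor:bounds}. The lower bound, however, opens with a reduction to a claim that your tools cannot deliver and that there is no reason to believe: you propose to prove $v_\ell\bigl(\hh*{-1}{D}{\pd{\unit{L}}}\bigr)\leq v_\ell(qst)$. But $\HH*{-1}{D}{\pd{\unit{L}}}\cong\HH*{-1}{G}{\pd{\unit{L}}}^{+}$ is the \emph{plus} part of the $G$-cohomology, i.e.\ the part governed by $\pd{\unit{k}}$, which has $\Z$-rank $d-1$; it is not the part that the CM hypothesis shrinks. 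Indeed, combining Proposition \ref{fixcohom} with the Herbrand quotient yields the exact identity
\[
\hh*{-1}{D}{\pd{\unit{L}}}\;=\;q\cdot\hh*{0}{D}{\pd{\unit{L}}}\cdot\frac{\hh{0}{G}{\unit{L}}^{-}}{\hh{-1}{G}{\unit{L}}^{-}},
\]
and the CM hypothesis (via $\pd{\unit{F}}^{-}=\pd{\roots{F}}$, the Kummer sequence and Lemmas \ref{lemma:schifo=nucleo} and \ref{lemma:rootsunity}) bounds only the last factor, by $st$. The factor $\hh*{0}{D}{\pd{\unit{L}}}=\gorder*{\pd{\unit{k}}/N_D\pd{\unit{L}}}$ is a quotient of $\pd{\unit{k}}/(\pd{\unit{k}})^q$ and can have order up to $q^{d-1}$; nothing in your plan controls it, so nothing controls $\hh*{-1}{D}{\pd{\unit{L}}}$ itself. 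What is both true and sufficient is the bound on the \emph{ratio}, $\hh*{-1}{D}{\pd{\unit{L}}}/\hh*{0}{D}{\pd{\unit{L}}}\leq qst$. Your ``key observation'' has the signs reversed: the $\pd{\roots{F}}$ contribution is not suppressed inside $\HH*{-1}{D}{\pd{\unit{L}}}$; rather, the \emph{minus} parts $\HH{i}{G}{\unit{L}}^{-}$ are the small, root-of-unity-controlled groups, and through Herbrand's identity this controls only the ratio of the two plus parts, never either one separately. Relatedly, $s$ does not govern the chain $(\pd{\unit{k}})^q\subseteq N_{K/k}(\pd{\unit{K}})\subseteq\pd{\unit{k}}$ (that index is $\hh*{0}{D}{\pd{\unit{L}}}$, unrelated to $s$); it enters as the order of $I_G\unit{L}\cap\roots[q]{F}$, the kernel of $\HH{-1}{G}{\roots[q]{L}}\to\HH{-1}{G}{\unit{L}}$.

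The ingredients you assemble are the right ones and are the paper's (the decomposition $\pd{\unit{F}}=\pd{\unit{k}}\oplus\pd{\roots{F}}$, the Herbrand quotient, the Kummer sequence, Lemma \ref{lemma:rootsunity}); the plan becomes correct once you retarget it at the ratio. Concretely: write $\hh*{0}{D}{\pd{\unit{L}}}/\hh*{-1}{D}{\pd{\unit{L}}}=\frac{1}{q}\cdot\hh{-1}{G}{\unit{L}}^{-}/\hh{0}{G}{\unit{L}}^{-}$, bound $\hh{0}{G}{\unit{L}}^{-}\leq\gorder*{\roots[{q^\infty}]{F}}=t\cdot\gorder*{\roots[q]{F}}$ by taking minus parts in the sequence defining $\HH{0}{G}{\unit{L}}$, and bound $\hh{-1}{G}{\unit{L}}^{-}\geq\gorder*{\roots[q]{F}}/s$ from the minus part of the $G$-cohomology of the Kummer sequence. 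Your auxiliary reduction to $W=\pd{\unit{L}}/\pd{\roots{L}}$ is harmless but does not help, precisely because $\HH*{-1}{D}{W}$ is still governed by the rank-$(d-1)$ plus part.
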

\begin{proof}
Again, it suffices to study the ratio $\hh{0}{D}{\pd{\unit{L}}}/\hh{-1}{D}{\pd{\unit{L}}}$, thanks to Theorem \ref{yap}. For $i=-1,0$ we write $\hh{i}{D}{\pd{\unit{L}}}=\hh{i}{G}{\unit{L}}/\hh{i}{G}{\unit{L}}^-$, where we denote by $\hh{i}{G}{\unit{L}}^-$ the order of the minus-part of $\HH{i}{G}{\unit{L}}$ (observe that $\HH{i}{G}{\unit{L}}=\HH{i}{G}{\pd{\unit{L}}}$ since $G$ is of odd order). We find
\begin{equation}\label{eq:spezzatino}
\frac{\hh*{0}{D}{\pd{\unit{L}}}}{\hh*{-1}{D}{\pd{\unit{L}}}}=\frac{\hh{0}{G}{\unit{L}}}{\hh{-1}{G}{\unit{L}}}\cdot\frac{\hh{-1}{G}{\unit{L}}^-}{\hh{0}{G}{\unit{L}}^-}=\frac{1}{q}\cdot\frac{\hh{-1}{G}{\unit{L}}^-}{\hh{0}{G}{\unit{L}}^-}.
\end{equation}

We now use that $F$ is a CM field and $F=k^+$. Starting from the tautological exact sequence defining $\HH{0}{G}{\unit{L}}$, we can consider its minus-part to obtain
\[
0\longrightarrow \bigl(\arnm{L}{F}\pd{\unit{L}}\bigr)^-\longrightarrow\bigl(\pd{\unit{F}}\bigr)^-\longrightarrow \HH{0}{G}{\unit{L}}^-\longrightarrow 0.
\]
The quotient $\unit{F}/\unit{k}\roots{F}$ is of order $1$ or $2$ by  \cite[Chapter XIII, \S2, Lemma~1]{LaCF} and, in particular, $\pd{\unit{F}}^-=\pd{\roots{F}}$. Since the rightmost term of the above sequence is killed by $q$, the sequence can be rewritten as
\begin{equation}\label{seq:mu_surjects}
0\longrightarrow \bigl(\pd{\arnm{L}{F}\unit{L}}\bigr)^-\longrightarrow \roots[{q^\infty}]{F}\longrightarrow \HH{0}{G}{\unit{L}}^-\longrightarrow 0.
\end{equation}
On the other hand, thanks to Lemma \ref{lemma:rootsunity}, the $G$-action on $\roots{L}$ is trivial and (as in the proof of Lemma \ref{lemma:schifo=nucleo}) we find $\roots[q]{F}=\HH{-1}{G}{\roots[q]{L}}=\HH{0}{G}{\roots[q]{L}}$. By considering the Kummer sequence \eqref{eq:kummer} and taking the minus part of its $G$-cohomology we obtain
\begin{equation}\label{diag:long_G_kummer}
\begin{aligned}
\xymatrix{
0\ar@{->}[0,1]&\ar@{->}[0,1]I_G\unit{L}\cap\roots[q]{F}&\ar@{->}[0,1]\HH*{-1}{G}{\roots[q]{L}}^-=\roots[q]{F}\ar@{->}[0,1]&\HH{-1}{G}{\unit{L}}^-
\ar `r[d] `[d] `^l[dlll] `[ddlll] `[ddll] [ddll]&\\
&&&&\\
&\HH*{-1}{G}{(\unit{L})^q}^-\ar@{->}[0,1]&\roots[q]{F}=\HH{0}{G}{\roots[q]{L}}^-
\ar@{->}[0,1]&Y\ar@{->}[0,1]&0
}
\end{aligned}\end{equation}
where $Y\subseteq \HH{0}{G}{\unit{L}}^-$ is defined by the exactness of the diagram. Taking orders in the above long exact sequence we deduce
\[
\hh{-1}{G}{\unit{L}}^-=\frac{\hh{-1}{G}{(\unit{L})^q}^-\cdot \gorder{Y}}{\gorder{I_G\unit{L}\cap\roots[q]{F}}}
\]
while \eqref{seq:mu_surjects} implies that $\hh{0}{G}{\unit{L}}^-\leq \gorder{\roots[{q^\infty}]{F}}$. Therefore \eqref{eq:spezzatino} becomes
\begin{equation*}
\frac{\hh*{0}{D}{\pd{\unit{L}}}}{\hh*{-1}{D}{\pd{\unit{L}}}}\geq\frac{1}{q}\cdot\frac{\hh{-1}{G}{(\unit{L})^q}^-\cdot\gorder{Y}}{\gorder{I_G\unit{L}\cap\roots[q]{F}}}\cdot\frac{1}{\gorder{\roots[{q^{\infty}}]{F}}}.
\end{equation*}
The second line of the long exact sequence \eqref{diag:long_G_kummer} shows that $\hh{-1}{G}{(\unit{L})^q}^-\cdot\gorder{Y}\geq \gorder{\roots[q]{F}}$ so that we obtain
\begin{equation}\label{eq:qquadro}
\frac{\hh*{0}{D}{\pd{\unit{L}}}}{\hh*{-1}{D}{\pd{\unit{L}}}}\geq\frac{1}{q}\cdot\frac{1}{\gorder{I_G\unit{L}\cap\roots[q]{F}}}\cdot\frac{1}{t}.
\end{equation}
Since $\pd{\roots{F}}=\pd{\unit{F}}^-$, we have $I_G\unit{L}\cap\roots[q]{F}=\bigl(I_G\unit{L}\cap{\roots{F}}\bigr)^-=\bigl(I_G\unit{L}\cap\unit{F}\bigr)^-$, by Lemma \ref{lemma:schifo=nucleo}, and applying Lemma \ref{lemma:rootsunity} we deduce that $I_G\unit{L}\cap\roots[q]{F}$ is of order $s$ because
\[
\Bigl(\bigl(\unit{L}\bigr)^q\cap\unit{F}/\bigl(\unit{F}\bigr)^q\Bigr)^+=\bigl((\unit{K})^q\cap\unit{k}\bigr)/\bigl(\unit{k}\bigr)^q.
\]
Taking $\ell$-adic valuations in \eqref{eq:qquadro}, we obtain the bound
\[
-v_\ell(q)-v_\ell(s)-v_\ell(t)\leq v_\ell\left(\frac{\cln{L}\cln{k}^2}{\cln{F}\cln{K}^2}\right).
\]
As for the other inequality of the corollary, we argue as in Corollary \ref{cor:bounds} observing that since $k$ is totally real, $\defect{k}{q}=1$ and $\rank_{\Z}(\unit{k})=d-1$.
\end{proof}

\begin{remark}\label{rmk:schifo_per_bartels}
The quotient $\bigl((\unit{K})^q\cap \unit{k}\bigr)/(\unit{k})^q$ has exponent dividing $q$ and is cyclic (by Lemma \ref{lemma:rootsunity}), hence its order $s$ divides $q$. We claim that $s=q$ if and only if $K=k(\sqrt[q]{u})$ for some unit $u\in\unit{k}$. This is equivalent to the fact that, given $u\in (\unit{K})^q\cap \unit{k}$, the class $[u]\in \bigl((\unit{K})^q\cap \unit{k}\bigr)/(\unit{k})^q$ has order strictly less than $q$ if and only if $k(x)\subsetneq K$, where $x\in\unit{K}$ is such that $x^q=u$. Note that $k(x)\subsetneq K$ precisely when $X^q-u$ is reducible, a condition equivalent to the existence of a prime $p\mid q$ such that $u \in (\unit{k})^p$ (see \cite[Chapter~ VI, Theorem~9.1]{Lan02}). We are thus reduced to show that the class $[u]\in \bigl((\unit{K})^q\cap \unit{k}\bigr)/(\unit{k})^q$ has order strictly less than $q$ if and only if there is $p\mid q$ such that $u \in (\unit{k})^p$.

If $u\in(\unit{k})^p$, then clearly $u^{q/p}\in (\unit{k})^q$ and the order of $[u]$ strictly divides $q$. Suppose, conversely, that $[u]$ has order $q/p$ for some prime $p$, so $u^{q/p}\in (\unit{k})^q$, and let $v\in \unit{k}$ be such that $u^{q/p}=v^q$. We deduce that $v^q = (x^{q/p})^q$ or, equivalently, $x^{q/p}=v\zeta$ for some $\zeta\in \roots{K}$. Hence $x^{q/p}\in \unit{k}$, by Lemma \ref{lemma:rootsunity}, and therefore $u=x^q=(x^{q/p})^p\in (\unit{k})^p$. This shows our claim.

When $q=p$ is a prime and $\roots[{p^\infty}]{F}=\roots[p]{F}$, we can restate Corollary \ref{cor:boundsCM} by saying that the ratio of class numbers equals $p^{m}$ for some $-2\leq m\leq (d-1)$ or $-1\leq m\leq (d-1)$ according as whether $K=k(\sqrt[p]{u})$ for some $u\in\unit{k}$, or not.
\end{remark}

The next corollary (which generalizes \cite[\S 4]{HK} and \cite[Th\'eor\`eme IV.1]{Mos79}) shows that if $k=\Q$ even sharper bounds hold.

\begin{corollary}\label{cor:boundsQ} 
Let $L/\Q$ be a dihedral extension of degree $2q$ where $q$ is odd, and let $F/\Q$ be the quadratic extension contained in $L$. Denote by $K$ the field fixed by a subgroup of order $2$ in $\Gal(L/\Q)$. Then, for every prime $\ell$,
\[
0 \geq v_\ell\left(\frac{\cln{L}}{\cln{F}\cln{K}^2}\right)\geq
\begin{cases}
-2v_\ell(q)&\text{if $F$ is real quadratic}\\
-v_\ell(q)&\text{if $F$ is imaginary quadratic.}
\end{cases}
\] 
\end{corollary}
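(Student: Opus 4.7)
The strategy is to specialize Corollaries \ref{cor:bounds} and \ref{cor:boundsCM} to the case $k=\Q$, where the arithmetic invariants of the base field degenerate: $\cln{\Q}=1$, $\unit{\Q}=\{\pm 1\}$ has rank zero, and $\defect{\Q}{q}=0$ for every odd $q\geq 3$. The upper bound in the statement is then immediate from Corollary \ref{cor:bounds}, since the quantity $b$ appearing there vanishes.

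For the lower bound I would split according to the signature of $F$. When $F$ is real quadratic, Dirichlet's unit theorem gives $\rank_\Z(\unit{F})=1$, and $\roots{F}=\{\pm 1\}$ forces $\defect{F}{q}=0$, so the quantity $a$ of Corollary \ref{cor:bounds} equals $2$, which produces the desired bound $-2v_\ell(q)$.

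When $F$ is imaginary quadratic, Corollary \ref{cor:bounds} is not sharp enough (for instance, when $F=\Q(\sqrt{-3})$ and $3\mid q$ one has $\defect{F}{q}=1$, so $a=2$ yields only $-2v_\ell(q)$). Here the plan is to use Corollary \ref{cor:boundsCM} instead: it applies because $F$ is a CM field of degree $2d=2$ over $\Q$ with $k=\Q=F^+$, so that it suffices to show that the integers $s$ and $t$ appearing in that statement are both equal to $1$. The equality $s=1$ is immediate, because $(\pm 1)^q=\pm 1$ gives $(\unit{\Q})^q=\unit{\Q}$ and the quotient $((\unit{K})^q\cap\unit{\Q})/(\unit{\Q})^q$ is trivial. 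For $t$, the finite group $\roots{F}$ is contained in the group of sixth roots of unity, so its $q$-power part $\roots[{q^\infty}]{F}$ is either trivial or equal to $\roots[3]{F}$; in the latter case one has $3\mid q$, hence $\roots[3]{F}\subseteq \roots[q]{F}$ and the quotient is again trivial. Plugging $s=t=1$ into Corollary \ref{cor:boundsCM} produces the bound $-v_\ell(q)$.

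The only (minor) obstacle is precisely controlling the exceptional case $F=\Q(\sqrt{-3})$ with $3\mid q$, which is the single situation where a naive appeal to Corollary \ref{cor:bounds} would miss the sharp exponent; pivoting to Corollary \ref{cor:boundsCM} and verifying the triviality of $t$ resolves it.
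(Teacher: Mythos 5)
Your proposal is correct and follows essentially the same route as the paper: Corollary \ref{cor:bounds} for the real quadratic case (and for the upper bound, since $b=0$ when $k=\Q$), and Corollary \ref{cor:boundsCM} with $s=t=1$ for the imaginary quadratic case, the only delicate point being $F=\Q(\sqrt{-3})$ with $3\mid q$. One trivial slip: $\roots{F}$ need not lie in the sixth roots of unity (take $F=\Q(i)$), but since $q$ is odd the $q$-power part of $\roots{F}$ is still nontrivial only for $F=\Q(\sqrt{-3})$ with $3\mid q$, so your conclusion $t=1$ stands.
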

\begin{proof}
The case when $F$ is real quadratic is already contained in Corollary \ref{cor:bounds}. If $F$ is totally imaginary, it is a CM field so that we can apply Corollary \ref{cor:boundsCM}: we need to prove that $s=t=1$. Recall that $s$ was defined as the order of the quotient
\[
\bigl((\unit{K})^q\cap\{\pm 1\}\bigr)/\{\pm 1\},
\]
which is trivial. As for $t$, the only case in which $\roots[{q^\infty}]{F}\neq 1$ occurs when $F=\Q(\sqrt{-3})$ and $3\mid q$, but in this case $\roots[{q^\infty}]{F}=\roots[{q}]{F}$, so $t=1$.
\end{proof}

\section{Class number formula in terms of a unit index}\label{sec:equivalence_unit_index}
In this section we rewrite the formula of Theorem \ref{yap} in a form more similar to the one often found in the literature, namely replacing the orders of cohomology groups by the index of a subgroup of the group of units of $L$. 	
\begin{notation}
Recall that for an abelian group $B$ and a natural number $n$, we write $\tors[n]{B}=\{b\in B : nb=0\}$. We set
\[
\tor_\Z(B) = \bigcup_{n\in\N}\tors[n]{B}
\]
for the torsion subgroup of $B$ and $\overline{B} = B/\tor_\Z(B)$ for the maximal torsion-free quotient of $B$. 
\end{notation}
		
\begin{proposition}\label{prop:unitindexcohom}	
When $B$ is a $D$-module which is $\Z$-finitely generated the following formula holds:
\[
\frac{\hh*{0}{D}{\pd{B}}}{\hh*{-1}{D}{\pd{B}}}=\gindex{B}{B^{\Sigma}+B^{\Sigma'}+B^G}\herbr{G}{B}\frac{\gorder*{I_GB\cap B^D}}{\gorder*{I_GB\cap B^G}}q^{\rank_\Z(B^D)-\rank_\Z(B^G)}.
\]
\end{proposition}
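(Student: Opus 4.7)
The plan is to reduce from $D$-cohomology to $G$-cohomology via Proposition~\ref{fixcohom}, then to an explicit lattice computation over the sign-isotypic component of $B\otimes\Q$. Because $G$ has odd order, every $G$-Tate cohomology group of $B$ is annihilated by $q$ and is thus canonically a $\pd{\Z}$-module; in particular $\hh{i}{G}{B}=\hh{i}{G}{\pd{B}}$ and $\herbr{G}{B}=\herbr{G}{\pd{B}}$. Combining this observation with the isomorphism $\HH{i}{D}{\pd{B}}\cong\HH{i}{G}{\pd{B}}^+$ from Proposition~\ref{fixcohom} and the canonical $\Delta$-decomposition $\HH{i}{G}{\pd{B}}=\HH{i}{G}{\pd{B}}^+\oplus\HH{i}{G}{\pd{B}}^-$ yields the key reduction
\[
\frac{\hh*{0}{D}{\pd{B}}}{\hh*{-1}{D}{\pd{B}}} \;=\; \herbr{G}{B}\cdot\frac{\hh{-1}{G}{B}^-}{\hh{0}{G}{B}^-}.
\]
The remaining task is therefore to identify the ratio on the right with the three other factors appearing in the formula of the proposition.

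To compute $\hh{-1}{G}{B}^-/\hh{0}{G}{B}^-$, I would analyse the auxiliary map $\bar{N}_G\colon \pd{B}/(\pd{B}^\Sigma+\pd{B}^{\Sigma'}) \to \pd{B}^G/\pd{B}^D$ induced by $N_G$. This map is well defined because the relation $\sigma N_G = N_G\sigma$ in $\Z[D]$ forces $N_G(\pd{B}^\Sigma)$ and $N_G(\pd{B}^{\Sigma'})$ to lie inside $\pd{B}^D$. A direct argument based on \eqref{eq:quozcoom} together with the equality $N_G\pd{B}\cap\pd{B}^D=N_D\pd{B}$ (which holds after inverting $2$, since any $y=N_Gx\in\pd{B}^D$ satisfies $2y=N_Dx$) produces the identification $|\coker\bar{N}_G|=\hh{0}{G}{B}^-$. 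For the kernel, one verifies the set-theoretic identity $\{b\in\pd{B}:N_Gb\in\pd{B}^D\} = \pd{B}^\Sigma+\pd{B}^{\Sigma'}+\pd{B}[N_G]$ by writing any such $b$ as $N_\Sigma b''+(b-N_\Sigma b'')$ with $b-N_\Sigma b''\in\pd{B}[N_G]$, and concludes that $|\ker\bar{N}_G|=\hh{-1}{G}{B}^-$ again by \eqref{eq:quozcoom}.

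The final step is a lattice computation. Both $L_1=\pd{B}/(\pd{B}^\Sigma+\pd{B}^{\Sigma'})$ and $L_2=\pd{B}^G/\pd{B}^D$ embed (modulo torsion) into the sign-isotypic component $V_{\mathrm{sgn}}\subseteq B\otimes\Q$, and under these embeddings $\bar{N}_G$ acts as multiplication by $q$, which gives the chain $qL_1\subseteq L_2\subseteq L_1$. The elementary identity $|L_2/qL_1|=q^{\rank L_1}/[L_1:L_2]$ then produces the expected factor $q^{\rank_\Z(B^D)-\rank_\Z(B^G)}$ together with a lattice index $[L_1:L_2]$. One identifies $[L_1:L_2]$ with the odd part of $\gindex{B}{B^\Sigma+B^{\Sigma'}+B^G}$ through the natural map $L_2\to L_1$, whose image is $(\pd{B}^G+\pd{B}^\Sigma+\pd{B}^{\Sigma'})/(\pd{B}^\Sigma+\pd{B}^{\Sigma'})$, and relates the torsion contributions of $L_1,L_2$ to $|I_GB\cap B^D|$ and $|I_GB\cap B^G|$ using variants of Lemma~\ref{lemma:schifo=nucleo}. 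A useful input for this step is the identity $(\pd{B_G})^+=$ image of $\pd{B}^\Sigma$ in $\pd{B_G}$, which is itself a consequence of Lemma~\ref{lemma:2incl} combined with the identification of $\ker\bar{N}_G$ above.

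The main obstacle will be the last piece of bookkeeping: because the left-hand side is odd (as it is a ratio of orders of cohomology groups of a uniquely $2$-divisible module), the $2$-parts of $\gindex{B}{B^\Sigma+B^{\Sigma'}+B^G}$ and of $|I_GB\cap B^D|/|I_GB\cap B^G|$ must cancel on the right. Pinning down this cancellation — rather than simply matching odd parts — is the delicate point; I expect to handle it by tracking carefully how the torsion subgroups of $L_1$ and $L_2$ fit into the long exact sequence coming from the inclusion $\tors{\pd{B}}\hookrightarrow\pd{B}$, along the lines of Lemma~\ref{lemma:schifo=nucleo}.
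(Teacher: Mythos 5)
Your proposal is correct, and it takes a genuinely different route from the paper's. The paper never isolates the minus parts of $G$-cohomology: it applies the index identity \eqref{eq:snake} to $N_G$ and the subgroup $B^{\Sigma}+B^{\Sigma'}+B^{G}$, splitting $\gindex{B}{B^{\Sigma}+B^{\Sigma'}+B^{G}}$ into a norm part and a kernel part that are evaluated separately (Lemmas \ref{lemma:firstfactor} and \ref{lemma:secondfactor}) and recombined via \eqref{eq:rankvstorsion}. You instead reduce the left-hand side to $\herbr{G}{B}\cdot\hh{-1}{G}{B}^-/\hh{0}{G}{B}^-$ exactly as in \eqref{eq:spezzatino}, and package the rest into the single map $\overline{N}_G\colon L_1\to L_2$ together with the natural map $\iota\colon L_2\to L_1$: since $N_Gb-qb\in I_GB\subseteq B^{\Sigma}+B^{\Sigma'}$, both composites $\overline{N}_G\circ\iota$ and $\iota\circ\overline{N}_G$ are multiplication by $q$, so the multiplicativity of $f\mapsto\gorder{\cokernel{f}}/\gorder{\kernel{f}}$ and \eqref{eq:rankvstorsion} yield
\[
\frac{\gorder*{\kernel{\overline{N}_G}}}{\gorder*{\cokernel{\overline{N}_G}}}=q^{\rank_\Z(B^D)-\rank_\Z(B^G)}\,\frac{\gorder*{\cokernel{\iota}}}{\gorder*{\kernel{\iota}}},
\]
with $\cokernel{\iota}=\pd{B}/(\pd{B}^{\Sigma}+\pd{B}^{\Sigma'}+\pd{B}^{G})$ and $\kernel{\iota}\cong(I_GB\cap B^{G})/(I_GB\cap B^{D})$, the latter because $\pd{B}^{G}\cap(\pd{B}^{\Sigma}+\pd{B}^{\Sigma'})=\pd{B}^{D}+(I_GB\cap B^{G})$ (split $b=\tfrac{1+\sigma}{2}b+\tfrac{1-\sigma}{2}b$ and use \eqref{eq:BSigma'} together with Lemma \ref{lemma:schifo=nucleo}). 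This is tighter than the paper's two-lemma bookkeeping, whose intermediate index computations your single ``Herbrand quotient of a map'' replaces. One remark: the ``delicate point'' you flag at the end is not actually there, as no $2$-adic cancellation is needed --- every factor on the right-hand side is individually a power of $q$. Indeed $I_GB\cap B^{G}$ is killed by $q$ by Lemma \ref{lemma:schifo=nucleo}, and $B/(B^{\Sigma}+B^{\Sigma'}+B^{G})$ is killed by $q$ because the relation $N_D-N_G-\sum_{\widetilde{\Sigma}}N_{\widetilde{\Sigma}}=-q$ from the proof of Lemma \ref{lemma:split_Walter}, combined with Lemma \ref{lemma:2incl}, gives $qB\subseteq B^{G}+B^{\Sigma}+B^{\Sigma'}$ for \emph{any} $D$-module $B$ (this is also the content of Lemma \ref{lemma:unitindexppower}); hence identifying $\gorder{\cokernel{\iota}}$ with the odd part of $\gindex{B}{B^{\Sigma}+B^{\Sigma'}+B^{G}}$ already identifies it with the full index.
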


Proposition \ref{prop:unitindexcohom}, as well as its proof, is purely algebraic and requires no arithmetic. We postpone its proof after Theorem \ref{thm:di_Bartel} and focus here on its arithmetic application.

\begin{corollary}\label{cor:formulawithunitindex}
With notation as in Theorem \ref{yap}, one has
\[
\frac{\cln{L}\cln{k}^2}{\cln{F}\cln{K}^2}=\gindex*{\unit{L}}{\unit{K}\unit{K'}\unit{F}}\frac{q^{(\rank_\Z(\unit{k})-\rank_\Z(\unit{F}) -1)}}{\gindex*{(\unit{K})^q\cap \unit{k}}{(\unit{k})^q}}.
\]
and the term $\gindex{(\unit{K})^q\cap \unit{k}}{(\unit{k})^q}$ divides $q$.
\end{corollary}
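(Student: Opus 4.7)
The plan is to specialize Proposition \ref{prop:unitindexcohom} to $B=\unit{L}$, combine it with Theorem \ref{yap}, and then exchange the cohomological torsion ratio $\gorder*{I_G\unit{L}\cap\unit{k}}/\gorder*{I_G\unit{L}\cap\unit{F}}$ for the inverse unit index $s^{-1}$, where $s=\gindex{(\unit{K})^q\cap\unit{k}}{(\unit{k})^q}$, by means of the $\Delta$-antiequivariant Kummer isomorphism of Lemma \ref{lemma:rootsunity}.

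First I would observe that $\unit{L}$ is $\Z$-finitely generated by Dirichlet's unit theorem, so Proposition \ref{prop:unitindexcohom} applies. Writing the unit group multiplicatively, the invariants $B^\Sigma,B^{\Sigma'},B^G,B^D$ become $\unit{K},\unit{K'},\unit{F},\unit{k}$, the additive sum $B^\Sigma+B^{\Sigma'}+B^G$ corresponds to the product subgroup $\unit{K}\unit{K'}\unit{F}$, and the rank exponent $\rank_\Z(B^D)-\rank_\Z(B^G)$ becomes $\rank_\Z(\unit{k})-\rank_\Z(\unit{F})$. Plugging in the value $\herbr{G}{\unit{L}}=1/q$ of the Herbrand quotient of units and combining Proposition \ref{prop:unitindexcohom} with Theorem \ref{yap} gives
\[
\frac{\cln{L}\cln{k}^2}{\cln{F}\cln{K}^2}=\gindex*{\unit{L}}{\unit{K}\unit{K'}\unit{F}}\cdot\frac{\gorder*{I_G\unit{L}\cap\unit{k}}}{\gorder*{I_G\unit{L}\cap\unit{F}}}\cdot q^{\rank_\Z(\unit{k})-\rank_\Z(\unit{F})-1}.
\]

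The core step is to establish $\gorder*{I_G\unit{L}\cap\unit{F}}=s\cdot\gorder*{I_G\unit{L}\cap\unit{k}}$. By Lemma \ref{lemma:rootsunity} there is a $\Delta$-antiequivariant isomorphism $\phi\colon((\unit{L})^q\cap\unit{F})/(\unit{F})^q\cong I_G\unit{L}\cap\unit{F}$, and both groups are annihilated by the odd integer $q$, hence uniquely $2$-divisible. They therefore decompose as direct sums of their $\pm$-parts, and $\phi$ interchanges these parts. The $\Delta$-fixed part of $I_G\unit{L}\cap\unit{F}$ is simply $I_G\unit{L}\cap\unit{k}$, while the $\Delta$-fixed part of the Kummer quotient equals $((\unit{K})^q\cap\unit{k})/(\unit{k})^q$, as already noted in the proof of Corollary \ref{cor:boundsCM}. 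Transporting via $\phi$, the $-$-part of $I_G\unit{L}\cap\unit{F}$ has order $s$, so $\gorder*{I_G\unit{L}\cap\unit{k}}/\gorder*{I_G\unit{L}\cap\unit{F}}=1/s$, and substitution yields the stated formula.

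For the divisibility claim, the group $((\unit{K})^q\cap\unit{k})/(\unit{k})^q$ is killed by $q$ since its representatives live in $\unit{k}$. Through $\phi$ it embeds into the finite cyclic group $I_G\unit{L}\cap\unit{F}\subseteq\roots{F}$ (cyclicity of which is recorded in Lemma \ref{lemma:rootsunity}), so it is itself cyclic, and a cyclic group of exponent dividing $q$ has order dividing $q$. The main technical obstacle in the whole argument is the identification $((\unit{L})^q\cap\unit{F}/(\unit{F})^q)^+=((\unit{K})^q\cap\unit{k})/(\unit{k})^q$: the nontrivial direction consists in replacing a $\Delta$-invariant class $[u]$, $u=v^q$, by the class of $u\sigma(u)=(v\sigma(v))^q$ where $v\sigma(v)\in\unit{L}^\Sigma=\unit{K}$ and $u\sigma(u)\in\unit{F}^\Sigma=\unit{k}$, and then exploiting the unique $2$-divisibility of the Kummer quotient to recover $[u]$ itself from $[u\sigma(u)]=2[u]$.
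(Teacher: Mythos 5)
Your proposal is correct and follows exactly the route the paper intends: its proof of this corollary is the one-line combination of Theorem \ref{yap}, Proposition \ref{prop:unitindexcohom} with $B=\unit{L}$, the Herbrand quotient $\herbr{G}{\unit{L}}=1/q$, and Lemma \ref{lemma:rootsunity}, and you have filled in precisely those steps, including the correct conversion of $\gorder{I_G\unit{L}\cap\unit{k}}/\gorder{I_G\unit{L}\cap\unit{F}}$ into $s^{-1}$ via the $\Delta$-antiequivariant Kummer isomorphism and the identification of the plus part used in the proof of Corollary \ref{cor:boundsCM}. The divisibility of $s$ by $q$ is likewise argued as in Remark \ref{rmk:schifo_per_bartels} (cyclicity from Lemma \ref{lemma:rootsunity} plus exponent dividing $q$), so there is nothing to add.
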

\begin{proof} 
The formula follows by combining Theorem \ref{yap}, Proposition \ref{prop:unitindexcohom}, the value of the Herbrand quotient of units and Lemma \ref{lemma:rootsunity}. 
\end{proof}

The formula of Corollary \ref{cor:formulawithunitindex} is similar to those appeared in the literature (see the Introduction for references). Sometimes the right-hand side is expressed differently but this is just a matter of rearranging terms. For instance, we prove below that Bartel's formula in \cite {Ba} coincides with ours when $q$ is a prime.
	
\begin{theorem}[Bartel, see~{\cite[Theorem~1.1]{Ba}}] \label{thm:di_Bartel}
Let $L/k$ be a Galois extensions of number fields with Galois group $D_{2p}$ for $p$ an odd prime, let $F$ be the intermediate quadratic extension and $K,K'$ be distinct intermediate extensions of degree $p$. Set $\delta$ to be $3$ if $K/k$ is obtained by adjoining the $p$th root of a non-torsion unit (thus so is $L/F$) and $1$ otherwise. Then we have
\[
\frac{\cln{L}\cln{k}^2}{\cln{F}\cln{K}^2}=p^{\alpha/2}\cdot\gindex*{\unit{L}}{\unit{K}\unit{K'}\unit{F}}
\]
where $\alpha=2\rank_\Z(\unit{k})-\rank_\Z(\unit{F}) - \frac{\rank_\Z(\unit{L})-\rank_\Z(\unit{F})}{p-1}-\delta$.
\end{theorem}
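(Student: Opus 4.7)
The plan is to use Corollary~\ref{cor:formulawithunitindex} (with $q = p$) and check that the resulting formula matches Bartel's by reducing to an identity involving $\delta$ and the unit index $s := \gindex*{(\unit{K})^p \cap \unit{k}}{(\unit{k})^p}$ from Remark~\ref{rmk:schifo_per_bartels}. Since the index $\gindex*{\unit{L}}{\unit{K}\unit{K'}\unit{F}}$ appears in both expressions, everything boils down to matching the powers of $p$.

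The first step is to compute the quantity $c := (\rank_\Z(\unit{L}) - \rank_\Z(\unit{F}))/(p-1)$ appearing in~$\alpha$. Viewing $\unit{L}\otimes\Q$ as a $\Q[G]$-module, with $G = \Gal(L/F)$, Dirichlet's archimedean description gives an isomorphism
\[
\unit{L}\otimes\Q \oplus \Q \;\cong\; \bigoplus_{w\mid\infty\text{ in }F}\operatorname{Ind}_{G_w}^G \Q,
\]
where $G_w$ denotes the decomposition group of $w$ in $L/F$. Since $\gorder{G} = p$ is odd, no archimedean place of $F$ can ramify in $L/F$ (any ramification index would need to be $2$), so every $G_w$ is trivial and the right-hand side collapses to $(\rank_\Z(\unit{F})+1)$ copies of $\Q[G]$. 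Decomposing $\Q[G] \cong \Q \oplus \chi$, with $\chi$ the nontrivial $(p-1)$-dimensional $\Q$-irreducible of~$G$, one reads off $c = \rank_\Z(\unit{F})+1$.

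Second, substituting this into Bartel's formula gives $\alpha/2 = \rank_\Z(\unit{k}) - \rank_\Z(\unit{F}) - (\delta+1)/2$. Corollary~\ref{cor:formulawithunitindex} with $q = p$, on the other hand, produces the exponent $\rank_\Z(\unit{k})-\rank_\Z(\unit{F}) - 1 - v_p(s)$ for the power of $p$ dividing the class-number ratio (outside of the common unit index). Matching the two expressions yields the single numerical identity $s = p^{(\delta-1)/2}$.

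The concluding step verifies this identity via Remark~\ref{rmk:schifo_per_bartels}: the index $s$ divides~$p$, and $s = p$ if and only if $K = k(\sqrt[p]{u})$ for some $u \in \unit{k}$. In the dihedral setting $K/k$ is non-Galois of prime degree $p$, since $\Sigma$ is not normal in $D$ for $p$ odd; hence such a~$u$ cannot be a root of unity, for otherwise $k(\sqrt[p]{u})$ would be cyclotomic, and in particular abelian, over~$k$. Thus $s = p$ precisely when $K = k(\sqrt[p]{u})$ for a \emph{non-torsion} unit, which is exactly Bartel's defining condition for $\delta = 3$, while $s = 1$ corresponds to $\delta = 1$. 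The only delicate point in the whole argument is the identification $c = \rank_\Z(\unit{F})+1$, which relies crucially on the oddness of $[L\colon F]$ to exclude ramified archimedean places; once this is in hand, the remainder is bookkeeping.
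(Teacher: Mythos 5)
Your proposal is correct and follows essentially the same route as the paper: reduce via Corollary~\ref{cor:formulawithunitindex} to the identity $p^{(\delta-1)/2}=\gindex*{(\unit{K})^{p}\cap \unit{k}}{(\unit{k})^{p}}$ after computing $\rank_\Z(\unit{L})=p\rank_\Z(\unit{F})+(p-1)$, and then invoke Remark~\ref{rmk:schifo_per_bartels}. The only (welcome) additions are that you spell out the rank computation via induced modules at the archimedean places and justify why the unit $u$ in $K=k(\sqrt[p]{u})$ is automatically non-torsion (since $K/k$ is non-abelian), a point the paper dismisses as clear.
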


\begin{remark} 
Observe that Bartel's theorem assumes that $q=p$ is an odd prime. However Bartel's formulation is more general than the one given above, since he obtains a formula for general $S$-class numbers where $S$ is any finite set of primes in $k$ containing the archimedean ones. Since the proof of Theorem \ref{yap} ultimately relies on the study of $G$-cohomology of Diagram \ref{diag:magic} and a $S$-version of the diagram exists, our main result should easily generalize to $S$-class numbers and $S$-units.
\end{remark}

\begin{proof} 
Observe that since $L/F$ is Galois of odd order, we have $\rank_\Z(\unit{L}) = p\rank_\Z(\unit{F}) +(p-1)$ and so
\[
\frac{\rank_\Z(\unit{L})-\rank_\Z(\unit{F})}{p-1}=\frac{(p-1)(\rank_\Z(\unit{F})+1)}{p-1}=\rank_\Z(\unit{F})+1.
\]
Therefore
\[
\alpha=2\rank_\Z(\unit{k})-2\rank_\Z(\unit{F})-1-\delta.
\]
Using this expression for $\alpha$ and comparing Corollary \ref{cor:formulawithunitindex} with the statement of the theorem, we simply need to show that 
\[
p^{\frac{\delta-1}{2}}=\gindex*{(\unit{K})^{p}\cap \unit{k}}{(\unit{k})^{p}}.
\]
By definition of $\delta$, the above reduces, using that $\bigl((\unit{K})^p\cap \unit{k}\bigr)/(\unit{k})^p$ is cyclic by Lemma \ref{lemma:rootsunity}, to prove that
\[
\gindex*{(\unit{K})^p\cap \unit{k}}{(\unit{k})^p}\neq 0 \Longleftrightarrow K=k(\sqrt[p]{u})\quad\text{ for some }u\in\unit{k}
\]
and this is clear (see also Remark \ref{rmk:schifo_per_bartels}).
\end{proof}

We now prove Proposition \ref{prop:unitindexcohom}. The strategy follows very closely the one of \cite[Section 2]{CapBra}, which, in turn, is partly inspired by \cite[Section 5]{Le}. 

\begin{lemma}\label{lemma:unitindexppower}
Let $B$ be a $D$-module which is $\Z$-finitely generated. Then the index of $B^{\Sigma}+B^{\Sigma'}+B^{G}$ in $B$ is finite and
\[
\gindex{B}{B^{\Sigma}+B^{\Sigma'}+B^{G}} = \gindex*{N_GB}{N_G(B^{\Sigma}+B^{\Sigma'}+B^{G})}\cdot\gindex*{B[N_G]}{B[N_G]\cap (B^{\Sigma} + B^{\Sigma'} + B^{G})}.
\]
Moreover, the three terms appearing in the above equality are coprime to any prime not diving $q$. 
\end{lemma}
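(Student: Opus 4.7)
The plan is to apply equation~\eqref{eq:snake} with $f=N_G$, $B$ as given, and $B'=B^{\Sigma}+B^{\Sigma'}+B^{G}$. Since $\kernel{N_G}=B[N_G]$ by definition, this immediately yields the claimed multiplicative decomposition, provided we first establish that $\gindex{B}{B^{\Sigma}+B^{\Sigma'}+B^{G}}$ is finite (which will force the finiteness of the two factors on the right-hand side, as the snake-lemma sequence exhibits them as subquotients of $B/(B^{\Sigma}+B^{\Sigma'}+B^{G})$).

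For the finiteness and, simultaneously, for the coprimality statement, the key point is that $qB \subseteq B^{\Sigma}+B^{\Sigma'}+B^{G}$. To see this, I would recycle verbatim the argument already used in Lemma~\ref{lemma:split_Walter}: the identity
\[
N_D - N_G - \sum_{\substack{\widetilde{\Sigma}<D\\\lvert\widetilde{\Sigma}\rvert=2}} N_{\widetilde{\Sigma}} = -q
\]
in $\Z[D]$ shows that for any $b\in B$, the element $qb$ lies in $B^G+\sum_{\widetilde{\Sigma}} B^{\widetilde{\Sigma}}$; and then identity~\eqref{eq:allintwo} (whose proof only relies on Lemma~\ref{lemma:2incl} and so is valid for an arbitrary $D$-module) reduces the latter sum to $B^G+B^{\Sigma}+B^{\Sigma'}$. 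Since $B$ is $\Z$-finitely generated, the quotient $B/qB$ is finite, and $B/(B^{\Sigma}+B^{\Sigma'}+B^{G})$ is a further quotient, hence finite; moreover, being annihilated by $q$, its order is a power of $q$ and in particular coprime to every prime $\ell\nmid q$.

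Finally, the coprimality claim for the two factors on the right-hand side follows formally from the multiplicative decomposition: their product equals $\gindex{B}{B^{\Sigma}+B^{\Sigma'}+B^{G}}$, which we have just shown to be a power of $q$, so each factor must divide a power of $q$ and is therefore itself a power of $q$. I do not foresee any real obstacle: the only subtlety is to make sure that the snake-lemma sequence of~\eqref{eq:snake} is correctly set up with $f=N_G$ and to invoke the already-proved inclusion $\sum_{\widetilde{\Sigma}}B^{\widetilde{\Sigma}}=B^{\Sigma}+B^{\Sigma'}$ without implicitly requiring the $2$-divisibility hypothesis used elsewhere in the paper.
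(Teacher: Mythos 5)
Your proof is correct, and while it shares the paper's skeleton --- both apply \eqref{eq:snake} with $f=N_G$ and $B'=B^{\Sigma}+B^{\Sigma'}+B^{G}$, reducing everything to finiteness and coprimality --- you handle that reduction by a genuinely different route. The paper treats the two factors separately: the second is controlled by the inclusion $I_GB\subseteq B[N_G]\cap(B^{\Sigma}+B^{\Sigma'}+B^{G})$ from Lemma~\ref{lemma:2incl} together with the finiteness of $\HH{-1}{G}{B}=B[N_G]/I_GB$ (of exponent dividing $q$ since $B$ is $\Z$-finitely generated), and the first by the inclusion $q(B^{G})\subseteq N_G(B^{\Sigma}+B^{\Sigma'}+B^{G})$. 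You instead establish the single global inclusion $qB\subseteq B^{\Sigma}+B^{\Sigma'}+B^{G}$ via the relation $N_D-N_G-\sum_{\widetilde{\Sigma}}N_{\widetilde{\Sigma}}=-q$ in $\Z[D]$ and the reduction \eqref{eq:allintwo}, so that $B/(B^{\Sigma}+B^{\Sigma'}+B^{G})$ is a quotient of the finite group $B/qB$ annihilated by $q$, and the coprimality of the two factors then falls out of the product formula. You are right to check that \eqref{eq:allintwo} needs no divisibility hypothesis (it rests only on Lemma~\ref{lemma:2incl}), since it is stated inside the proof of Lemma~\ref{lemma:split_Walter} under stronger assumptions; this is the one point where care is genuinely required, and you address it. Your argument is arguably more uniform, at the cost of re-invoking the group-ring identity; the paper's stays closer to the cohomological bookkeeping it needs later. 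One small wording slip: a finite group annihilated by $q$ has order whose prime divisors all divide $q$, but that order need not literally be a power of $q$ when $q$ is composite --- the conclusion you actually use (coprimality to every $\ell\nmid q$) is nonetheless correct.
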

\begin{proof}
By equation \eqref{eq:snake} with $f=N_G$ and $B'=B^{\Sigma}+B^{\Sigma'}+B^G$, we only need to verify that the above indices are finite. Since $B$ is $\Z$-finitely generated, all $G$-Tate cohomology groups of $B$ are finite of exponent dividing $q$. In particular, $B[N_G]\cap \bigl(B^{\Sigma} + B^{\Sigma'} + B^{G}\bigr)$ is of finite index in $B[N_G]$ since it contains $I_GB$ by Lemma \ref{lemma:2incl}. This argument also shows that $\gindex{B[N_G]}{B[N_G]\cap (B^{\Sigma} + B^{\Sigma'} + B^{G})}$ is coprime to any prime not dividing $q$. On the other hand, $N_G\bigl(B^{\Sigma}+B^{\Sigma'}+B^{G}\bigr)$ contains $q(B^{G})$, so its index in $B^G$ is finite and coprime to any prime not dividing $q$. The same holds therefore for the index of $N_G\bigl(B^{\Sigma}+B^{\Sigma'}+B^{G}\bigr)$ in $N_GB$, since $N_GB\subseteq B^G$. This finishes the proof.
\end{proof}

We now analyze separately the two factors of the right-hand side of the equality of Lemma \ref{lemma:unitindexppower}.
\begin{lemma}\label{lemma:firstfactor}
Let $B$ be a $D$-module which is $\Z$-finitely generated. Then the following equality holds:
\[
\gindex*{N_GB}{N_G(B^{\Sigma}+B^{\Sigma'}+B^{G})} = \frac{\hh*{0}{D}{\pd{B}}\gindex*{B^G}{q(B^{G})}}{\hh*{0}{G}{\pd{B}}\gindex*{B^D}{q(B^{D}\vphantom{B^G})}}.
\]
\end{lemma}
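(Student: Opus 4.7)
The plan is to pass to the uniquely $2$-divisible setting $\pd{B}$ and exploit the $\Delta$-module decomposition of $\pd{B}^G$ into plus and minus parts.

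First, I would simplify the subgroup $N_G(B^{\Sigma}+B^{\Sigma'}+B^G)$. By \eqref{eq:BSigma'}, $B^{\Sigma}+B^{\Sigma'}=B^{\Sigma}+I_GB$, so applying $N_G$ and using $N_G(I_GB)=0$ together with $N_G(B^G)=qB^G$ yields $N_G(B^{\Sigma}+B^{\Sigma'}+B^G)=N_GB^{\Sigma}+qB^G$. By Lemma \ref{lemma:unitindexppower} the index to compute is odd, hence unchanged when tensoring everything by the flat $\Z$-module $\pd{\Z}$. Since $\pd{B}$ is uniquely $2$-divisible we have $\pd{B}^{\Sigma}=N_{\Sigma}\pd{B}$, and therefore $\pd{N_GB^{\Sigma}}=N_GN_{\Sigma}\pd{B}=N_D\pd{B}$.

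Next, I would decompose the relevant subgroups via the $\Delta$-action on $\pd{B}^G=\pd{B}^D\oplus(\pd{B}^G)^{-}$. Two identifications, both following from $N_D=N_{\Sigma}N_G$ and the invertibility of $2$, are key: $(N_G\pd{B})^+=N_D\pd{B}$ (since $N_{\Sigma}$ acts as multiplication by $2$ on the plus part of $N_G\pd{B}$) and $q\pd{B}^D\subseteq N_D\pd{B}$ (since $qb=\tfrac{1}{2}N_Db$ for $b\in\pd{B}^D$). These yield
\[
N_G\pd{B}=N_D\pd{B}\oplus(N_G\pd{B})^{-}\qquad\text{and}\qquad N_D\pd{B}+q\pd{B}^G=N_D\pd{B}\oplus q(\pd{B}^G)^{-},
\]
so the index in question becomes $[(N_G\pd{B})^{-}:q(\pd{B}^G)^{-}]$.

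Finally, from the chain $q(\pd{B}^G)^{-}\subseteq(N_G\pd{B})^{-}\subseteq(\pd{B}^G)^{-}$ I would write
\[
[(N_G\pd{B})^{-}:q(\pd{B}^G)^{-}]=\frac{[(\pd{B}^G)^{-}:q(\pd{B}^G)^{-}]}{[(\pd{B}^G)^{-}:(N_G\pd{B})^{-}]}.
\]
The denominator is the order of the minus part of $\HH{0}{G}{\pd{B}}$, which equals $\hh*{0}{G}{\pd{B}}/\hh*{0}{D}{\pd{B}}$ by Proposition \ref{fixcohom}. For the numerator, the split short exact sequence $0\to\pd{B}^D\to\pd{B}^G\to(\pd{B}^G)^{-}\to 0$ reduced modulo $q$ gives $|(\pd{B}^G)^{-}/q|=|\pd{B}^G/q|/|\pd{B}^D/q|$; since $q$ is odd, $|\pd{A}/q\pd{A}|=|A/qA|$ for any abelian group $A$, producing $\gindex{B^G}{qB^G}/\gindex{B^D}{qB^D}$. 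Assembling these identifications yields the stated formula. The most delicate step is the identification $(N_G\pd{B})^+=N_D\pd{B}$, which requires verifying both inclusions and depends crucially on the invertibility of $2$ in $\pd{\Z}$.
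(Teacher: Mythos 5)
Your proof is correct. It shares its opening reduction with the paper's — both pass to $\pd{B}$ via Lemma \ref{lemma:unitindexppower}, use the vanishing of $\HH{0}{\Sigma}{\pd{B}}$ to get $N_G(\pd{B}^{\Sigma})=N_D\pd{B}$, and thereby reduce to computing $\gindex{N_G\pd{B}}{N_D\pd{B}+q\pd{B}^G}$ — but the core computation is organized differently. The paper evaluates this index through a chain of elementary index identities (multiplicativity in towers and the second isomorphism theorem), whose key arithmetical input is the intersection formula $q(B^G)\cap N_DB=q(B^D)$ of \eqref{eq:normqpow}. You instead split everything along the $\Delta$-eigenspace decomposition of $\pd{B}^G$: the identifications $(N_G\pd{B})^+=N_D\pd{B}$ and $q\pd{B}^D\subseteq N_D\pd{B}$ (the latter being one inclusion of the paper's \eqref{eq:normqpow}) cancel the plus parts and confine the index to $(\pd{B}^G)^-$, where the split exact sequence evaluates it. Both routes rest on the same two facts — invertibility of $2$ and $N_D=N_\Sigma N_G$ — but yours makes the mechanism visible (the entire index lives in the minus eigenspace), at the cost of checking that the relevant sums are direct, while the paper's stays closer to a bare index computation with no eigenspaces. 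The step you flag as delicate, $(N_G\pd{B})^+=N_D\pd{B}$, does hold: for $x=N_Gb$ fixed by $\sigma$ one has $2x=N_\Sigma N_Gb=N_Db$, and unique $2$-divisibility of $B$ (hence $2$-divisibility of $N_D\pd{B}$) forces $x\in N_D\pd{B}$, the reverse inclusion being immediate from $N_D=N_\Sigma N_G$.
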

\begin{proof}
Every factor appearing in the formula of the statement is coprime to any prime not dividing $q$ by Lemma \ref{lemma:unitindexppower}. This means that, replacing $B$ by $\pd{B}$, we can assume that $B$ is uniquely $2$-divisible (note that Tate cohomology groups remain finite) so that $\Sigma$-Tate cohomology of $B$ is trivial, and similarly for $\Sigma'$. In particular 
\[
N_G(B^\Sigma)=N_G(N_\Sigma B)=N_DB
\]
and similarly $N_G(B^{\Sigma'})=N_G(N_{\Sigma'}B)=N_DB$. Therefore
\begin{align*}
\gindex*{N_GB}{N_G(B^{\Sigma}+B^{\Sigma'}+B^{G})}& =\gindex{N_GB}{q(B^G)+N_DB} \\
&=\frac{\gindex*{B^G}{q(B^G)+N_DB}}{\hh{0}{G}{B}}\\
&=\frac{\gindex*{B^G}{q(B^G)}}{\gindex*{q(B^G) + N_DB\vphantom{B^G}}{q(B^G)\vphantom{B^{G}}}\cdot\hh{0}{G}{B}}\\
&=\frac{\gindex*{B^G}{qB^G}}{\gindex{N_DB}{qB^G\cap N_DB}\cdot\hh{0}{G}{B}}\\
\text{(see \eqref{eq:normqpow} below)}&=\frac{\gindex*{B^G}{q(B^G)}}{\gindex*{N_DB}{q(B^D)\vphantom{B^G}}\cdot\hh{0}{G}{B}}\\
&=\frac{\gindex*{B^G}{q(B^G)}\cdot\hh{0}{D}{B}}{\gindex*{B^D}{q(B^D\vphantom{B^G})}\cdot\hh{0}{G}{B}}.
\end{align*}
We have used that
\begin{equation}\label{eq:normqpow}
q(B^G)\cap N_D B = q(B^D). 
\end{equation}
To see this, first observe that
$q(B^G)\cap B^D = q(B^D)$
because the surjective map $B^G\stackrel{q}{\longrightarrow}q(B^G)$ stays surjective after taking $\Sigma$-invariants, since $B^G$ is uniquely $2$-divisible. In particular, we have $q(B^G)\cap N_DB\subseteq q(B^D)$ which implies
\[
q(B^G)\cap N_DB = q(B^D)\cap N_DB.
\]
Now \eqref{eq:normqpow} follows since
\[
q(B^D)\cap N_DB = q(B^D)
\]
because
\[
q(B^D) =N_G(B^D) = N_G(N_\Sigma (B^G) ) = N_D(B^G) \subseteq N_DB. \qedhere
\]
\end{proof}

\begin{lemma}\label{lemma:secondfactor}
Let $B$ be a $D$-module which is $\Z$-finitely generated. Then
\[
\gindex*{B[N_G]}{B[N_G]\cap\bigl(B^{\Sigma}+B^{\Sigma'}+B^{G}\bigr)} = \frac{\hhN*{1}{G}{\pd{B}}\cdot \gindex*{I_GB \cap \tors{B^G}}{I_GB\cap \tors{B^D}}}{\hh*{-1}{D}{\pd{B}}\cdot \gindex*{\tors{B^G}}{\tors{B^D}}}.
\]
\end{lemma}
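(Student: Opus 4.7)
The plan is to follow the strategy of Lemma~\ref{lemma:firstfactor} applied to the kernel part: introduce the intermediate subgroup $D_0=B[N_G]\cap(B^\Sigma+B^{\Sigma'})$ inside the chain $I_GB\subseteq D_0\subseteq C\subseteq B[N_G]$, where $C=B[N_G]\cap(B^\Sigma+B^{\Sigma'}+B^G)$. As in Lemma~\ref{lemma:unitindexppower}, every index appearing on either side is coprime to any prime outside $q$ (on the right because $\tors{B^G}$ and $\tors{B^D}$ are $q$-primary and the Tate cohomologies of $\pd{B}$ have trivial $2$-part), so I would first replace $B$ by $\pd{B}$ and assume $B$ uniquely $2$-divisible. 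Under this hypothesis, \eqref{eq:quozcoom} together with Proposition~\ref{fixcohom} give $B[N_G]/D_0\cong\HH{-1}{G}{B}^-\cong\HHN{1}{D}{B}$, and the decomposition $\hhN{1}{G}{\pd{B}}=\hhN{1}{D}{\pd{B}}\cdot\hh{-1}{D}{\pd{B}}$ from Proposition~\ref{fixcohom} reduces the target identity to
\[
\gindex*{C}{D_0}=\frac{\gindex*{\tors{B^G}}{\tors{B^D}}}{\gindex*{I_GB\cap\tors{B^G}}{I_GB\cap\tors{B^D}}}.
\]

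Next, I would realise $C/D_0$ as a concrete subquotient of $B^G$ by projecting $C$ onto $(B^\Sigma+B^{\Sigma'}+B^G)/(B^\Sigma+B^{\Sigma'})\cong B^G/(B^G\cap(B^\Sigma+B^{\Sigma'}))$, whose kernel restricted to $C$ is exactly $D_0$. Writing any element of $C$ as $a+b$ with $a\in B^\Sigma+B^{\Sigma'}$ and $b\in B^G$, the vanishing $N_G(a+b)=0$ together with the 2-divisible identity $N_G(B^\Sigma+B^{\Sigma'})=N_DB$ (valid because $\HH{0}{\Sigma}{B}=0$) gives $qb=N_Gb\in N_DB$. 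Therefore the image of the projection is $E/\bigl(B^G\cap(B^\Sigma+B^{\Sigma'})\bigr)$, where $E\df\{b\in B^G : qb\in N_DB\}$.

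The decisive step is the explicit identification
\[
E=B^D+\tors{B^G}\qquad\text{and}\qquad B^G\cap(B^\Sigma+B^{\Sigma'})=B^D+(I_GB\cap\tors{B^G}).
\]
For the first equality, unique $2$-divisibility forces $N_DB\subseteq B^D$, so $b\in E$ implies $(\sigma-1)b\in\tors{B^G}$; the averaging decomposition $b=\tfrac12(b+\sigma b)+\tfrac12(b-\sigma b)$ then splits $b$ into a $B^D$-part plus an element of $\tors{B^G}$, using that $\tors{B^G}$ is uniquely $2$-divisible. For the second equality, by \eqref{eq:BSigma'} any $b$ in the intersection can be written $b=a+y$ with $a\in B^\Sigma$ and $y\in I_GB$; since $\sigma G\sigma^{-1}=G$ the submodule $I_GB$ is $\sigma$-stable, so $(\sigma-1)b=(\sigma-1)y\in I_GB$, and the same averaging supplies the decomposition after recalling from the proof of Lemma~\ref{lemma:schifo=nucleo} that $I_GB\cap B^G\subseteq\tors{B^G}$. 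Combining these two descriptions with $B^D\cap\tors{B^G}=\tors{B^D}$ yields
\[
C/D_0\cong\tors{B^G}\big/\bigl(\tors{B^D}+(I_GB\cap\tors{B^G})\bigr),
\]
whose order is exactly the ratio displayed above by the elementary formula for the order of a sum of two subgroups. The main technical hurdle is the second decomposition, where the $\sigma$-stability of $I_GB$ combined with the unique $2$-divisibility of $I_GB\cap B^G$ is what makes the averaging land in the desired submodule; everything else is bookkeeping with indices and tools already assembled in Section~\ref{sec:cohomprelim}.
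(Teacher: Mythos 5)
Your proof is correct, and its first half coincides with the paper's: the reduction to $\pd{B}$, the splitting $\gindex*{B[N_G]}{C}=\hhN*{1}{D}{\pd{B}}/\gindex*{C}{D_0}$ with $C=B[N_G]\cap(B^\Sigma+B^{\Sigma'}+B^G)$ and $D_0=B[N_G]\cap(B^\Sigma+B^{\Sigma'})$ via Lemma \ref{lemma:quozcoom}, and the identity $\hhN*{1}{G}{\pd{B}}=\hhN*{1}{D}{\pd{B}}\cdot\hh*{-1}{D}{\pd{B}}$ from Proposition \ref{fixcohom} are all present in the paper's argument. Where you genuinely diverge is in evaluating $\gindex*{C}{D_0}$. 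The paper observes via \eqref{eq:quozcoom} that $\Sigma$ acts as $-1$ on $C/D_0$, so that $C/D_0=C^-/D_0^-$, and then computes $C^-=\left(I_GB+\tors{B^{G}}\right)^-$ and $D_0^-=\left(I_GB\right)^-$ using \eqref{eq:BSigma'}, \eqref{eq:BSigma-} and the additivity of $(\cdot)^-$ on sums of uniquely $2$-divisible modules; you instead project $C$ into $B^G/\bigl(B^G\cap(B^\Sigma+B^{\Sigma'})\bigr)$ and pin down image and kernel through the explicit decompositions $E=B^D+\tors{B^G}$ and $B^G\cap(B^\Sigma+B^{\Sigma'})=B^D+(I_GB\cap\tors{B^G})$. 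Both routes ultimately rest on the idempotents $\frac{1\pm\sigma}{2}$; the paper's is shorter because the $\pm$-decomposition is functorial, while yours makes the structure inside $B^G$ (and the role of $B^D$) completely explicit. Three small points you should write out: surjectivity of $C$ onto $E$ modulo the kernel, which you only argue in one direction --- given $b\in E$ with $qb=N_Dx$, the element $-N_\Sigma x+b$ lies in $C$ by the same identity $N_G(B^\Sigma)=N_DB$ you already invoked; the identification $\tors{B^G}\cap\bigl(B^D+(I_GB\cap\tors{B^G})\bigr)=\tors{B^D}+(I_GB\cap\tors{B^G})$, needed before taking orders in the final ratio; and the fact that $N_DB\subseteq B^D$ holds for any $D$-module, independently of $2$-divisibility, so that attribution can be dropped.
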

\begin{proof}
Observe that $\bigl(B[N_G]:B[N_G]\cap\bigl(B^{\Sigma}+B^{\Sigma'}+B^{G}\bigr)\bigr)$ is coprime to any prime not dividing $q$ (by Lemma \ref{lemma:unitindexppower}). The same holds for all the factors of the right-hand side of the equality of the lemma. This means that, replacing $B$ by $\pd{B}$, we can assume that $B$ is uniquely $2$-divisible.

Observe first that 
\begin{align*}
\gindex*{B[N_G]}{B[N_G]\cap\bigl(B^{\Sigma}+B^{\Sigma'}+B^{G}\bigr)} &= \frac{\gindex*{B[N_G]}{B[N_G]\cap\bigl(B^{\Sigma}+B^{\Sigma'}\bigr)}}{\gindex{B[N_G]\cap\bigl(B^{\Sigma}+B^{\Sigma'}+B^{G}\bigr)}{B[N_G]\cap\bigl(B^{\Sigma}+B^{\Sigma'}\bigr)}}\\
\text{(by Lemma \ref{lemma:quozcoom})}&= \frac{\hhN{1}{D}{B}}{\gindex*{B[N_G]\cap\bigl(B^{\Sigma}+B^{\Sigma'}+B^{G}\bigr)}{B[N_G]\cap\bigl(B^{\Sigma}+B^{\Sigma'}\bigr)}}
\end{align*}
Since $\hhN{1}{D}{B} = \hh{-1}{G}{B}/\hh{-1}{D}{B}$ by Proposition \ref{fixcohom}, we only need to prove that 
\[
\gindex*{B[N_G]\cap\bigl(B^{\Sigma}+B^{\Sigma'}+B^{G}\bigr)}{B[N_G]\cap\bigl(B^{\Sigma}+B^{\Sigma'}\bigr)} = \frac{\gindex*{\tors{B^G}}{\tors{B^D}}}{\gindex*{I_GB \cap \tors{B^G}}{I_GB\cap \tors{B^D}}}.
\]
To prove the above equality, observe that $\Sigma$ acts as $-1$ on the quotient 
\[
\frac{B[N_G]\cap\bigl(B^{\Sigma}+B^{\Sigma'}+B^{G}\bigr)}{B[N_G]\cap\bigl(B^{\Sigma}+B^{\Sigma'}\bigr)}
\]
by \eqref{eq:quozcoom}. In particular we have
\begin{align*}
\frac{B[N_G]\cap\bigl(B^{\Sigma}+B^{\Sigma'}+B^{G}\bigr)}{B[N_G]\cap\bigl(B^{\Sigma}+B^{\Sigma'}\bigr)}& = \left(\frac{B[N_G]\cap\bigl(B^{\Sigma}+B^{\Sigma'}+B^{G}\bigr)}{B[N_G]\cap\bigl(B^{\Sigma}+B^{\Sigma'}\bigr)}\right)^-\\
& = \frac{\left(B[N_G]\cap\bigl(B^{\Sigma}+B^{\Sigma'}+B^{G}\bigr)\right)^-}{\left(B[N_G]\cap\bigl(B^{\Sigma}+B^{\Sigma'}\bigr)\right)^-}\\
\text{(by \eqref{eq:BSigma-})}& = \frac{\left(B[N_G]\cap\bigl(B^{\Sigma}+B^{\Sigma'}+B^{G}\bigr)\right)^-}{\left(I_GB\right)^-}.
\end{align*}
Now note that $B^{\Sigma}+B^{\Sigma'} = I_GB + B^\Sigma$ by \eqref{eq:BSigma'}, so that 
\begin{align*}
B[N_G]\cap\bigl(B^{\Sigma}+B^{\Sigma'}+B^{G}\bigr) &= B[N_G]\cap\bigl(I_GB +B^{\Sigma}+B^{G}\bigr)\\
&=I_GB +  \bigl(B[N_G]\cap\bigl(B^{\Sigma}+B^{G}\bigr)\bigr)
\end{align*}
and taking minus parts we obtain
\begin{align*}
\bigl(I_GB +  \bigl(B[N_G]\cap(B^{\Sigma}+B^{G})\bigr)\bigr)^-&=\left(I_GB\right)^- +  \bigl(B[N_G]\cap(B^{\Sigma}+B^{G})\bigr)^-\\
&=\left(I_GB\right)^- +  B[N_G]\cap\left(B^{G}\right)^-\\
&=\left(I_GB\right)^- +  \left(\tors{B^{G}}\right)^-\\
&=\left(I_GB+\tors{B^{G}}\right)^-.
\end{align*}
Here we have used that for uniquely $2$-divisible $\Sigma$-modules $B_1$ and $B_2$, we have $(B_1+B_2)^- = \frac{1-\sigma}{2}(B_1+B_2) = \frac{1-\sigma}{2}B_1+ \frac{1-\sigma}{2}B_2 = B_1^- + B_2^-$. Therefore
\begin{align*}
\frac{B[N_G]\cap\bigl(B^{\Sigma}+B^{\Sigma'}+B^{G}\bigr)}{B[N_G]\cap\bigl(B^{\Sigma}+B^{\Sigma'}\bigr)} &= \frac{\left(I_GB+\tors{B^{G}}\right)^-}{\left(I_GB\right)^-}\\
&\cong\frac{\left(\tors{B^{G}}\right)^-}{\left(I_GB\cap \tors{B^G}\right)^-}\\
&=\frac{\tors{B^G}/\tors{B^D}}{(I_GB\cap \tors{B^G})/(I_GB\cap \tors{B^D})}
\end{align*}
and this concludes the proof of the lemma.
\end{proof}

Let $B$ be a finitely generated $\Z$-module and let $m$ be a positive natural number. Then we have
\[
\gorder*{\tors[m]{B}}\cdot m^{\rank_\Z(B)}= \gindex{B}{mB}.
\]
Indeed, let $\pi\colon B\to \overline{B}=B/\tor_{\Z}(B)$ be the projection map. It induces a commutative diagram with exact rows
\[
\xymatrix{
0\ar@{->}[0,1]&\tors[m]{B}\ar@{->}[0,1]&B\ar@{->}[0,1]^{m}\ar@{->>}[1,0]^{\pi}&B\ar@{->}[0,1]\ar@{->>}[1,0]^{\pi}&B/mB\ar@{->}[0,1]\ar@{->}[1,0]^{\pi_m}&0\\
&0\ar@{->}[0,1]&\overline{B}\ar@{->}[0,1]^{m}&\overline{B}\ar@{->}[0,1]&\overline{B}/m\overline{B}\ar@{->}[0,1]&0
}
\]
where $\pi_m$ is the map induced by $\pi$ on $B/mB$. The snake lemma gives an exact sequence of finite abelian groups
\[
0\longrightarrow \tors[m]{B}\longrightarrow\tor_\Z(B)\stackrel{m}{\longrightarrow}\tor_\Z(B)\longrightarrow\kernel{\pi_m}\longrightarrow0.
\]
On the other hand we have the tautological exact sequence 
\[
0\longrightarrow \kernel{\pi_m}\longrightarrow B/mB \longrightarrow\overline{B}/m\overline{B}\longrightarrow 0.
\]
Taking cardinalities we obtain
\begin{equation}\label{eq:rankvstorsion}
\gorder*{\tors[m]{B}}=\gorder*{\kernel{\pi_m}} =\frac{\gindex{B}{mB}}{\gindex{\overline{B}}{m\overline{B}}} =\gindex{B}{mB}m^{-\mathrm{rk}_\Z(B)},
\end{equation}
as claimed.

\begin{proof}[Proof of Proposition \ref{prop:unitindexcohom}]
Combining Lemmas \ref{lemma:unitindexppower}, \ref{lemma:firstfactor} and \ref{lemma:secondfactor} we get
\begin{align*}
\gindex*{B}{B^{\Sigma}+B^{\Sigma'}+B^{G}} &= \gindex*{N_G B}{N_G(B^{\Sigma}+B^{\Sigma'}+B^{G})}\cdot\gindex*{B[N_G]}{B[N_G]\cap (B^{\Sigma} + B^{\Sigma'} + B^{G})}\\
&=\frac{\hh*{0}{D}{\pd{B}}\cdot \gindex*{B^G}{q(B^G)}}{\hh*{0}{G}{\pd{B}}\cdot \gindex*{B^D}{q(B^D)\vphantom{B^G}}}\frac{\hhN*{1}{G}{\pd{B}}\cdot \gindex{I_GB\cap \tors{B^G}}{I_GB\cap \tors{B^D}}}{\hh*{-1}{D}{\pd{B}}\cdot \gindex{\tors{B^G}}{\tors{B^D}}}\\
&= \frac{\gindex*{B^G}{q(B^G)}}{\gindex*{B^D}{q(B^D)\vphantom{B^G}}\cdot\gindex{\tors{B^G}}{\tors{B^D}}}\frac{\hh*{0}{D}{\pd{B}}}{\hh*{-1}{D}{\pd{B}}}\frac{\gindex*{I_GB\cap \tors{B^G}}{I_GB\cap \tors{B^D}}}{\herbr{G}{B}}.
\end{align*}
Now observe that, in light of \eqref{eq:rankvstorsion}, we have
\[
\frac{\gindex*{B^G}{q(B^G)}}{\gindex*{B^D}{q(B^D)\vphantom{B^G}}}=q^{\rank_\Z(B^G)-\rank_\Z(B^D)}\gindex*{\tors{B^G}}{\tors{B^D}}.\qedhere
\]
\end{proof}

\bibliography{biblio}
\bibliographystyle{amsalpha}

\end{document}